\providecommand{\tabularnewline}{\\}
\newcommand{\R}{\mathbb{R}}
\setlist[itemize]{leftmargin=1.5em}
\setlist[enumerate]{leftmargin=1.5em}
\newcommand{\ba}{\bm{a}}
\newcommand{\bz}{\bm{z}}
\newcommand{\bI}{\bm{I}}
\newcommand{\bX}{\bm{X}}
\DeclareMathOperator{\EE}{\mathbb{E}}
\newcommand{\btheta}{\bm{\theta}}
\newcommand{\bmu}{\bm{\mu}}
\DeclareMathOperator{\ind}{\mathds{1}}  
\newcommand{\mymid}{\,|\,}
\numberwithin{equation}{section}
\renewcommand{\star}{\ast}
\definecolor{wj}{RGB}{0,0,200}
\definecolor{yly}{RGB}{0,150,0}
\begin{document}
\theoremstyle{plain} \newtheorem{lemma}{\textbf{Lemma}} \newtheorem{prop}{\textbf{Proposition}}
\newtheorem{theorem}{\textbf{Theorem}}\setcounter{theorem}{0}
\newtheorem{corollary}{\textbf{Corollary}} 
\newtheorem{assumption}{\textbf{Assumption}}
\newtheorem{assumptions}{\textbf{Assumption}} 
\newtheorem{example}{\textbf{Example}} 
\newtheorem{definition}{\textbf{Definition}}
\newtheorem{fact}{\textbf{Fact}} \newtheorem{condition}{\textbf{Condition}}\theoremstyle{definition}

\theoremstyle{remark}\newtheorem{remark}{Remark}\newtheorem{claim}{{Fact}}\newtheorem{conjecture}{\textbf{Conjecture}}

\title{Isotonic Mechanism for Exponential Family Estimation in Machine Learning Peer Review}
\author{Yuling Yan\thanks{Department of Statistics, University of Wisconsin-Madison, Madison, WI 53706, USA; Email: \texttt{yuling.yan@wisc.edu}.}\and Weijie J.~Su\thanks{Department of Statistics and Data Science, The Wharton School, University
		of Pennsylvania, Philadelphia, PA 19104, USA; Email: \texttt{suw@wharton.upenn.edu}.} \and Jianqing Fan\thanks{Department of Operations Research and Financial Engineering, Princeton
		University, Princeton, NJ 08544, USA; Email: \texttt{jqfan@princeton.edu}.}}

\maketitle

\begin{abstract}
In 2023, the International Conference on Machine Learning (ICML) required authors with multiple submissions to rank their submissions based on perceived quality. In this paper, we aim to employ these author-specified rankings to enhance peer review in machine learning and artificial intelligence conferences by extending the Isotonic Mechanism to exponential family distributions. This mechanism generates adjusted scores that closely align with the original scores while adhering to author-specified rankings. Despite its applicability to a broad spectrum of exponential family distributions, implementing this mechanism does not require knowledge of the specific distribution form. We demonstrate that an author is incentivized to provide accurate rankings when her utility takes the form of a convex additive function of the adjusted review scores. For a certain subclass of exponential family distributions, we prove that the author reports truthfully \textit{only if} the question involves only pairwise comparisons between her submissions, thus indicating the optimality of ranking in truthful information elicitation. Moreover, we show that the adjusted scores improve dramatically the estimation accuracy compared to the original scores and achieve nearly minimax optimality when the ground-truth scores have bounded total variation. We conclude with a numerical analysis of the ICML 2023 ranking data, showing substantial estimation gains in approximating a proxy ground-truth quality of the papers using the Isotonic Mechanism.
\end{abstract}



\section{Introduction}

In recent years, the surge in submissions to artificial intelligence and machine learning conferences, such as ICML, NeurIPS, and ICLR, has made it increasingly challenging to maintain the quality of peer review. For instance, the number of submissions to NeurIPS has grown from 1,673 in 2014 to a record-high of 10,411 in 2022~\citep{nipsexp, nips2020,accept}. This increase largely results from the fact that it has become common for an author to submit \textit{multiple} papers to a single artificial intelligence and machine learning conference. For example, one author submitted 32 papers to ICLR 2020 \citep{iclr}. However, the number of qualified reviewers and the time they can offer are limited. This discrepancy has compelled conference organizers to recruit many novice reviewers, which has consequently compromised the quality of peer review~\citep{stelmakh2021novice,shah2022challenges}.

Unlike review processes in most journals, reviewers for machine learning conferences are required to score papers based on quality (for example, on a scale from 1 to 10), and the average score across typically three reviewers serves as the single most important factor in deciding whether to accept or reject a paper. As a manifestation of the decline in review quality, it is increasingly common for review scores to exhibit a significant level of variability (see Figure~\ref{fig:iclr}). The arbitrariness of these scores makes it very difficult to make sound decisions regarding whether to accept or reject a paper, or to recommend paper awards. For example, the 2014 NIPS experiment showed that approximately 57\% of the accepted papers would be rejected if reviewed twice~\citep{nipsexp}.

\begin{figure}[t]
	\centering
	
	\begin{tabular}{c}
		\includegraphics[scale=0.45]{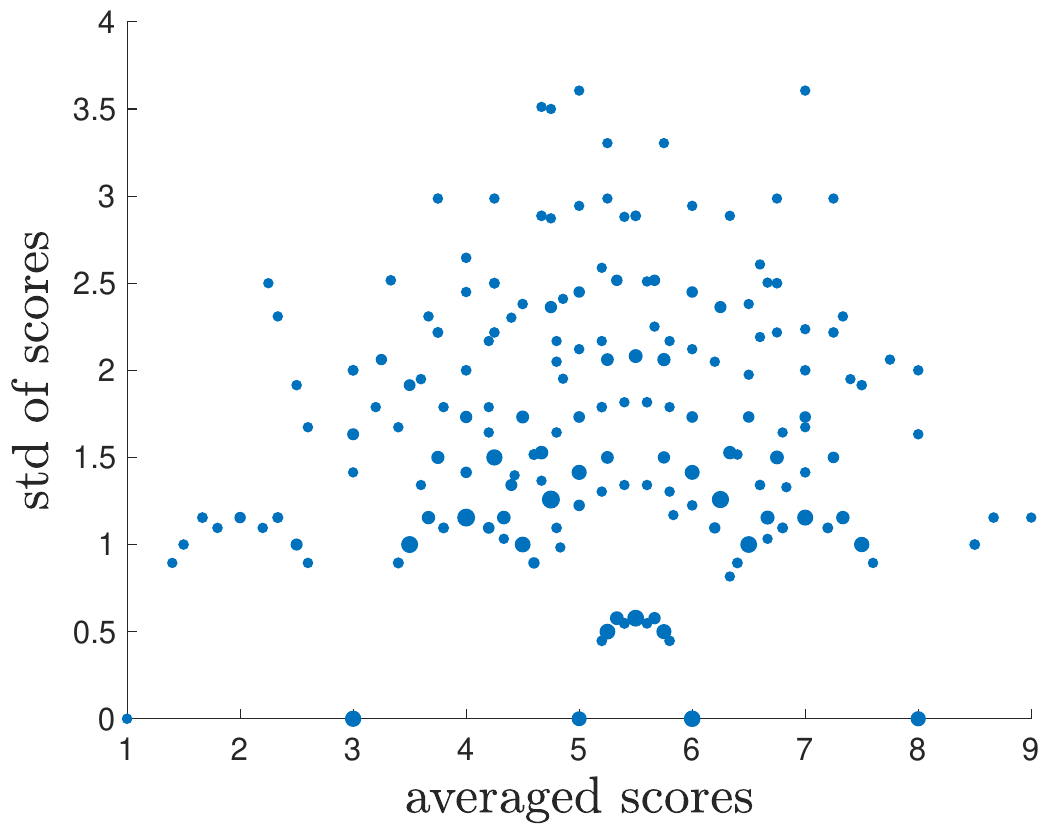}
	\end{tabular}
	
	\caption{Average review scores versus their standard deviations for all papers submitted to ICLR 2022 (\url{https://github.com/fedebotu/ICLR2022-OpenReviewData}). Each paper typically received three to five reviews, enabling the computation of its average score and standard deviation. Each point in this figure corresponds to a possible (average score, standard deviation) pair, where the size of a point reflects the number of papers with this pair of average review score and standard deviation. \label{fig:iclr}}
\end{figure}

To assess submission quality based on review scores from a statistical perspective, we begin by assuming that the review scores follow an exponential family distribution in the canonical form:
\begin{equation}\label{eq:exp}
p_{\theta}(x) = \mathrm{e}^{\theta x-b\left(\theta \right)} c\left(x\right),
\end{equation}
where $\theta$ is a natural parameter, $b$ is a log-partition function, and $c$ is a carrier function. Here, $\theta$ or, equivalently, $\mu := \EE [X] = b'(\theta)$ can be considered as the ground-truth quality of the paper.\footnote{This ground-truth quality can be understood in practical terms as the expected review score if the paper were to be evaluated by a large number of reviewers.} Exponential family distributions provide a simple yet flexible approach for modeling the dependence between the mean and the variance of a random variable, as is the case in Figure~\ref{fig:iclr}. The most relevant example is the Binomial distribution, whose data type (taking integer values within a certain range) and variance-mean dependence (smaller variance for high and low quality papers) makes it an ideal choice for modeling noisy review scores. For an author who submits $n \ge 2$ papers to a machine learning conference, the average review score $X_i$ of the $i$th paper is thought of as a realization from the exponential family distribution with parameter $\theta^\star_i$:
\[
X_i \sim  p_{\theta_i^\star}(x)
\]
for $i = 1, \ldots, n$.

A readily available estimator of the quality $\mu_i^\star = \EE X_i = b'(\theta^\star_i) $ of the $i$th paper is simply the raw review score $X_i$, which is the maximum likelihood estimator (MLE) of $\mu_i^\star$. To see this, note that the negative log-likelihood reads:
\[
\log \left( p_{\theta_1}(X_1) \cdots p_{\theta_n}(X_n) \right) = \sum_{i=1}^n \left[ \theta_{i}X_{i}-b\left(\theta_{i}\right)+\log c\left(X_{i}\right) \right],
\] 
where the review scores $X_1, \ldots, X_n$ are assumed to be independent in light of the fact that the review processes of the $n$ papers are independent.\footnote{Due to the large numbers of submissions and reviewers, it is unlikely that different papers by the same author would be reviewed by the same reviewer.} Minimizing the above negative log-likelihood over $\theta_i$ yields $\widehat{\theta}_{i}=(b')^{-1}(X_{i})$, which is equivalent to $\widehat{\mu}_{i}=X_{i}$. The triviality of the MLE in this case arises from treating the $n$ submissions separately, despite the fact that these $n$ papers share a \textit{common} author.

To obtain a better estimation of submission quality, ICML 2023, for the first time in its history, asked any authors who submitted multiple papers to \textit{rank} their submissions from their own perspective, upon submission confirmation (\url{https://icml.cc/Conferences/2023/CallForPapers}, \url{https://openrank.cc}). This experiment is based on a recent proposal by \cite{su2021you, su2022truthful} to improve peer review in machine learning conferences. Formally, let an author of $n$ submissions report a ranking of $\pi$ such that, as claimed by the author, $\pi$ sorts the ground-truth natural parameters $\theta_1^\star, \ldots, \theta_n^\star$ in descending order.\footnote{Mathematically speaking, $\pi(1), \ldots, \pi(n)$ is a permutation of $1, \ldots, n$.} It is important to note, however, that the author might report an incorrect ranking, and therefore, $\pi$ is not necessarily identical to the ground-truth ranking $\pi^\star$ that satisfies
\begin{equation}\label{eq:constraint}
\theta_{\pi^\star(1)}^{\star} \geq \theta_{\pi^\star(2)}^{\star} \geq \cdots \geq \theta_{\pi^\star(n)}^{\star}.
\end{equation}

With the author-provided ranking in place, in this paper, we introduce the ranking-constraint MLE to estimate $\mu^\star_1, \ldots, \mu^\star_n$: 
\begin{equation}\label{eq:generalized-isotonic-regression}
\begin{aligned}
\mathop{\arg\min}_{\theta_1, \ldots, \theta_n}\qquad & \sum_{i=1}^{n}\left[-\theta_{i}X_{i}+b\left(\theta_{i}\right)\right]\\
\text{s.t.}\qquad & \theta_{\pi(1)}\geq\theta_{\pi(2)}\geq\cdots\geq\theta_{\pi(n)}.
\end{aligned}
\end{equation}
This is a convex program because $b(\cdot)$ is convex, and it possesses a unique minimizer $\widehat{\boldsymbol{\theta}}=(\widehat{\theta}_{1},\ldots,\widehat{\theta}_{n})$ in the general setting where $b(\cdot)$ is strictly convex. The estimated scores are
\begin{equation}\label{eq:modify}
\widehat{\boldsymbol{\mu}} = b'(\widehat{\boldsymbol{\theta}}) = \left( b'(\widehat{\theta}_{1}), \ldots, b'(\widehat{\theta}_{n}) \right).
\end{equation}

The implementation of the mechanism \eqref{eq:generalized-isotonic-regression} requires knowing the log-partition function $b(\cdot)$, which is also the case for obtaining the adjusted scores in \eqref{eq:modify}. Interestingly, we show that this dependence can be circumvented, thereby making the mechanism \textit{robust} to model misspecification. Specifically, by using a classical result from \citet[Theorem 3.1]{barlow1972isotonic}, \eqref{eq:generalized-isotonic-regression} is equivalent to isotonic regression 
\begin{equation}\label{eq:isotonic-regression-no-b}
\begin{aligned}
\mathop{\arg\min}_{\mu_1, \ldots, \mu_n} \qquad & \sum_{i=1}^{n}\left(X_{i}-\mu_{i}\right)^{2}\\
\text{s.t.}\qquad & \mu_{\pi(1)}\geq\mu_{\pi(2)}\geq\cdots\geq\mu_{\pi(n)},
\end{aligned}
\end{equation}
which is referred to as the \textit{Isotonic Mechanism}~\citep{su2021you}, in the sense that the adjusted score vector $\widehat\bmu = b'(\widehat\btheta)$ in \eqref{eq:modify} coincides with the solution to \eqref{eq:isotonic-regression-no-b}. The feasible set of \eqref{eq:isotonic-regression-no-b} contains the ground-truth parameter $\bmu^\star = (\mu^\star_1, \ldots, \mu^\star_n)$ when $\pi = \pi^\star$, that is, when the author is \textit{truthful}. In particular, the feasible sets of \eqref{eq:generalized-isotonic-regression} and \eqref{eq:isotonic-regression-no-b} are essentially the same in view of the relationship $\mu = b'(\theta)$ and the mapping $b'(\cdot)$ is nondecreasing. This equivalence is convenient because it enables the implementation \textit{without} requiring the knowledge of the probability distribution of the observations $X_i$'s. 

The Isotonic Mechanism can be better understood within the framework of mechanism design~\citep{myerson1989mechanism}. In this context, the author acts as the sole agent possessing private information about the quality of all her papers submitted to a conference. The author provides a ranking that presumably sorts her papers from best to worst to the conference organizers---referred to as the principal. The principal then inputs this ranking, along with the review scores, into the Isotonic Mechanism to obtain adjusted review scores. It is important to note that reviewers assess the papers as they would in any standard machine learning conference, without knowledge of the author-provided rankings. Similarly, the author has no access to the review scores before submitting her ranking.

The Isotonic Mechanism \eqref{eq:isotonic-regression-no-b} is \textit{owner-assisted} \citep{su2022truthful}, as it requires authors to provide information pertaining to their own items. For the practical application of this mechanism, several questions immediately arise:

\begin{enumerate}
\item
First, might the author provide an inaccurate ranking despite being aware of the ground-truth ranking?

\item
Is it possible to elicit any other truthful information from the author? For example, can conference organizers inquire how \textit{much} one paper surpasses another of the same author in terms of quality?

\item
Lastly, would the estimator derived from the Isotonic Mechanism be more accurate than the raw scores $X_i$'s? If so, is it optimal in some sense?
\end{enumerate}

A challenge in addressing these questions stems from the \textit{generality} of exponential family distributions, including normal, binomial, and Poisson distributions, among others. In the subsequent part of this section, we present the main results of our paper that respond to these three questions. The proofs of the results in Sections~\ref{subsec:truthfulness}, \ref{sec:necess-pairw-comp}, and \ref{sec:optimal-estimation} are given in Sections~\ref{sec:truthfulness}, \ref{sec:rank-almost-optim}, and \ref{sec:proof-minimax}, respectively, accompanied by additional findings.


\subsection{Truthfulness}
\label{subsec:truthfulness}


To analyze the strategy of a \textit{rational} author, it is imperative to specify the author's utility. We assume an additive utility function across all coordinates of the adjusted review score vector $\widehat\bmu = (\widehat\mu_1, \ldots, \widehat\mu_n)$. This assumption stems from the observation that, given the anonymity of the process and the large number of submissions, the decision-making processes\footnote{ICML 2023 collects rankings but will not utilize adjusted scores in decision-making. Adopting such a significant reform in peer review necessitates time for such a large community.} for each paper are largely independent of one another. This rationale informs the subsequent assumption, as in~\cite{su2022truthful}:
\begin{assumption}[Utility]\label{ass:convex}
The author's overall utility takes the form
\begin{equation}\label{eq:util_form}
\mathsf{Util} := \EE \left[ \sum_{i=1}^n U( \widehat \mu_i ) \right]
\end{equation}
for some nondecreasing convex function $U$. 
\end{assumption}

The author is rational, which means that she will  report the ranking that maximizes her expected utility, whether truthfully or otherwise. The convexity assumption acknowledges that a higher score may increase the likelihood of winning oral presentations or even best paper awards, thus yielding progressively greater payoffs. This is particularly applicable in the long term, as reviews are often made public at machine learning conferences such as ICLR. Consequently, a higher review score significantly enhances the submission's impression within the community. Additional justification can be found in Section 4 of \cite{su2021you} and Section 2 of \cite{su2022truthful}.

\begin{assumption}\label{ass:author2}
The author knows the ground-truth ranking $\pi^\star$ that fulfills \eqref{eq:constraint} or, equivalently, $\mu_{\pi^\star(1)}^{\star} \geq \mu_{\pi^\star(2)}^{\star} \geq \cdots \geq \mu_{\pi^\star(n)}^{\star}$.
\end{assumption}

For this assumption, the author merely needs to understand the relative quality of her submissions rather than their precise quality.

The subsequent theorem examines a \textit{regular} exponential family distribution $p_{\theta}(x) = \mathrm{e}^{\theta x-b\left(\theta \right)} c\left(x\right)$, where $b(\cdot)$ is twice differentiable and satisfies $b''(\theta) > 0$ within its domain. This condition implies that $X$ has nonzero and finite variance due to the identity $\mathsf{Var}\left(X\right)=b''\left(\theta\right)$. Lastly, we presume that $X_{1},\ldots,X_{n}$ are independently generated by the exponential family distribution with distinct parameters. 

Let $\mathsf{Util}(\pi)$ denote the expected overall utility \eqref{eq:util_form} of the solution to the Isotonic Mechanism \eqref{eq:isotonic-regression-no-b} when the input ranking is $\pi$.

\begin{theorem}\label{thm:truth-telling}
Under Assumptions \ref{ass:convex} and \ref{ass:author2}, the author maximizes her expected overall utility by truthfully reporting the ground-truth ranking $\pi^\star$. That is,
\[
\mathsf{Util}(\pi^\star) \ge \mathsf{Util}(\pi)
\]
for any ranking $\pi$.
\end{theorem}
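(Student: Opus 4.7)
My plan is to reduce the theorem to a sequence of adjacent-swap comparisons (the rearrangement step) and then prove each such comparison by coupling the exponential family's monotone-likelihood-ratio property with a Schur-majorization property of isotonic regression onto the decreasing cone.

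\textbf{Reduction and symmetrization.} After relabeling papers we may assume $\pi^\star$ is the identity, so $\theta_1^\star\geq\cdots\geq\theta_n^\star$. For $\balpha\in\RR^n$, define
\[
W(\balpha) := \EE\bigg[\sum_{j=1}^n U\bigl(\widehat\nu_j(Y)\bigr)\bigg],\qquad Y_j\sim p_{\alpha_j}\text{ independently},
\]
where $\widehat\nu(Y)$ solves \eqref{eq:isotonic-regression-no-b} under the decreasing constraint $\nu_1\geq\cdots\geq\nu_n$; then (after the relabeling $Y_j=X_{\pi(j)}$, $\nu_j=\mu_{\pi(j)}$) we have $\mathsf{Util}(\pi)=W(\theta^\star_{\pi(1)},\ldots,\theta^\star_{\pi(n)})$. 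By a standard bubble-sort argument, the theorem follows if for every index $k$, every fixed choice of the remaining $\alpha_j$, and every $\alpha_k<\alpha_{k+1}$, swapping these two entries does not decrease $W$. Conditioning on $\{Y_j:j\neq k,k+1\}$ and writing $H(u,v):=\sum_j U(\widehat\nu_j(Y))$ when $Y_k=u,Y_{k+1}=v$, the change in $W$ produced by this swap equals, after using $q(u,v)=-q(v,u)$,
\[
\tfrac12\,\EE'\!\iint\bigl[H(u,v)-H(v,u)\bigr]\,q(u,v)\,\mathrm{d}u\,\mathrm{d}v,\quad q(u,v):=p_{\alpha_{k+1}}(u)p_{\alpha_k}(v)-p_{\alpha_k}(u)p_{\alpha_{k+1}}(v),
\]
where $\EE'$ is the expectation over the other coordinates.

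\textbf{Signs of the two factors.} Expanding \eqref{eq:exp} factorizes
\[
q(u,v)=c(u)c(v)\,e^{-b(\alpha_k)-b(\alpha_{k+1})+\alpha_k(u+v)}\bigl(e^{(\alpha_{k+1}-\alpha_k)u}-e^{(\alpha_{k+1}-\alpha_k)v}\bigr),
\]
which has the same sign as $u-v$ because $\alpha_{k+1}>\alpha_k$. It therefore suffices to show $H(u,v)\geq H(v,u)$ whenever $u\geq v$; I would deduce this from the stronger claim that, with all other $Y_j$ held fixed, the isotonic regression vector produced with $(Y_k,Y_{k+1})=(u,v)$, $u\geq v$, majorizes the one produced with $(Y_k,Y_{k+1})=(v,u)$. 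Schur-convexity of the map $(\nu_1,\ldots,\nu_n)\mapsto\sum_j U(\nu_j)$, which follows from convexity of $U$, then yields the desired inequality on $H$.

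\textbf{The majorization step (main obstacle).} The majorization itself is the crux of the proof. The two inputs are identical outside positions $k,k+1$ and share the same sum $u+v$ there, so the two PAV runs differ only in how they merge blocks straddling positions $k$ and $k+1$; placing the larger value at the higher-ranked slot $k$ delays pooling, leaving the upper portion of the solution pointwise larger and the lower portion pointwise smaller, which is precisely majorization. Turning this intuition into a proof requires a careful case analysis of how the optimal partition shifts under the adjacent swap, or equivalently an argument through the min--max formula $\widehat\nu_j(Y)=\max_{s\geq j}\min_{r\leq j}\tfrac{1}{s-r+1}\sum_{i=r}^s Y_i$, tracking the effect of the swap on each candidate block average. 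Once the majorization lemma is established, the pointwise sign agreement of $H(u,v)-H(v,u)$ and $q(u,v)$ makes the symmetrized integral nonnegative, proving the adjacent-swap inequality; iterating across all inversions of $\pi$ relative to $\pi^\star$ gives $\mathsf{Util}(\pi^\star)\geq\mathsf{Util}(\pi)$ for every $\pi$.
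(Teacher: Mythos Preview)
Your proposal is correct and follows essentially the same route as the paper: reduction to pairwise swaps (you use adjacent swaps via bubble sort, the paper uses not-necessarily-adjacent ``upward swaps,'' but the mechanics are identical), the monotone-likelihood-ratio factorization of the exponential family to control the sign of $q(u,v)$, and a majorization property of the isotonic projection combined with Hardy--Littlewood--P\'olya/Schur convexity. The majorization step you flag as the main obstacle is precisely the paper's Lemma~\ref{lemma:no-majorization-projection} (quoted from \citet[Lemma~6.5]{su2022truthful}: if $\bm{a}\succeq_{\mathrm{no}}\bm{b}$ then $\mathcal{P}_{\mathcal{C}}(\bm{a})\succeq\mathcal{P}_{\mathcal{C}}(\bm{b})$), so you need not rederive it through a PAV case analysis.
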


An important feature of this theorem is that it applies to the broad class of exponential family of distributions. In game theoretic terms, this theorem demonstrates that the owner-assisted mechanism \eqref{eq:isotonic-regression-no-b} is incentive-compatible. If the utility function $U$ is strictly convex, it satisfies strict incentive compatibility, which means that $\mathsf{Util}(\pi^\star) > \mathsf{Util}(\pi)$ for any ranking $\pi$ that does not sort $\btheta^\star = (\theta^\star_1, \ldots, \theta^\star_n)$ or $\bmu^\star$ in descending order.

\begin{figure}[t]
	\centering
	
	\begin{tabular}{cc}
		\includegraphics[scale=0.45]{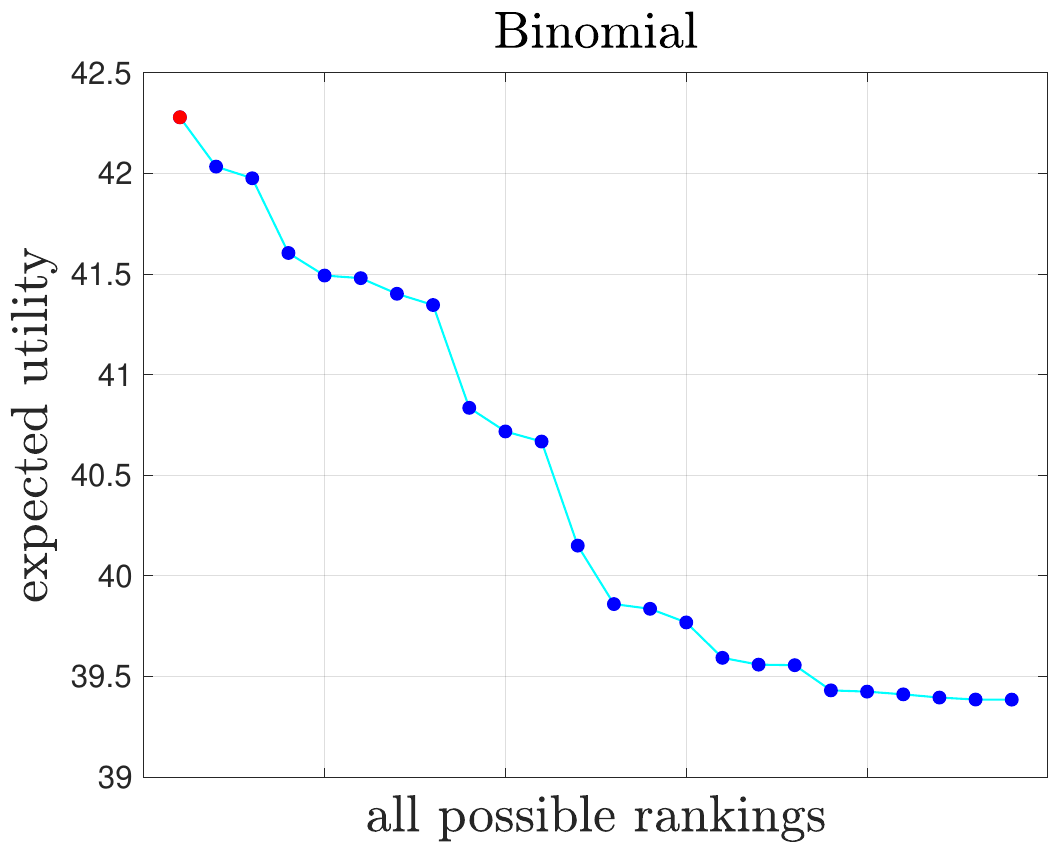} & \includegraphics[scale=0.45]{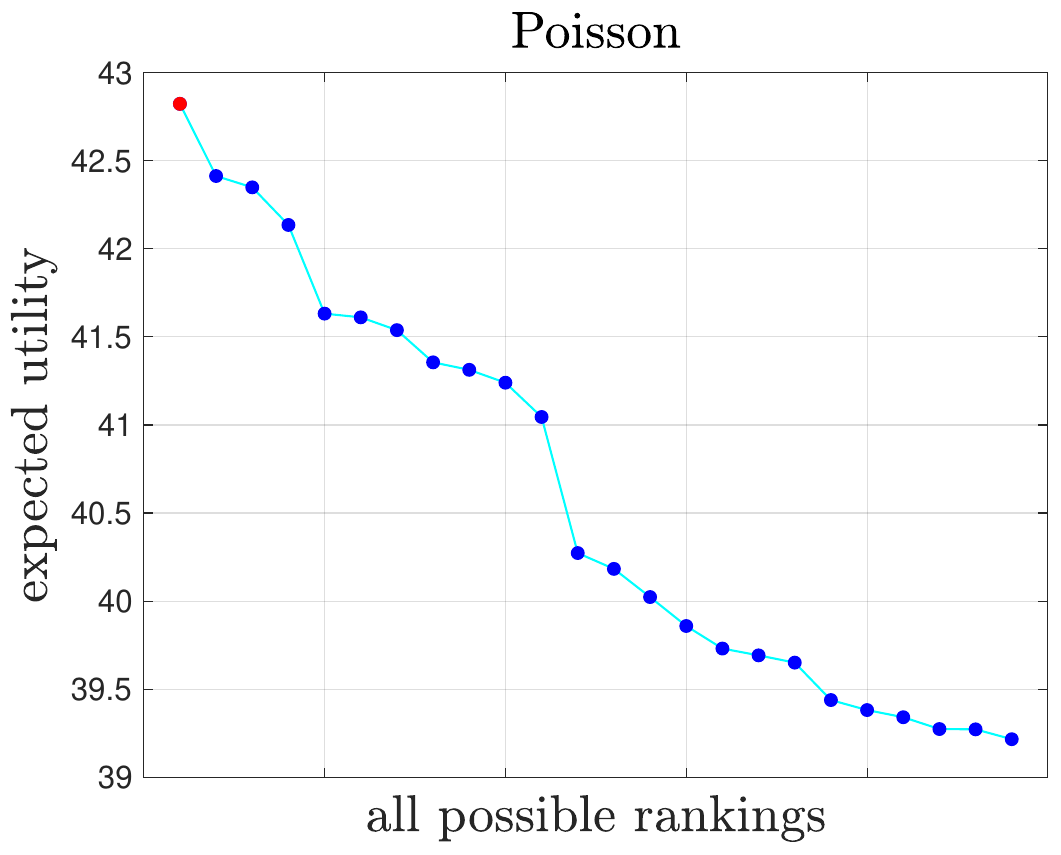}\tabularnewline
		$\quad\quad$(a) & $\quad\quad$(b)\tabularnewline
	\end{tabular}
	\caption{Expected overall utility for all $n! = 4! = 24$ possible rankings in descending order, averaged over $100$ runs. The left panel uses review scores simulated from binomial distributions, while the right panel uses review scores simulated from Poisson distributions. In both panels, $(\mu_{1}^{\star},\mu_{2}^{\star},\mu_{3}^{\star},\mu_{4}^{\star})=(8,7,6,4)$, and the utility function $U(x) = \max\{x, 0\}^2$ (binomial counts are generated from $\mathrm{Binom}(10, \mu_{i}^{\star}/10)$). The red dot represents the expected utility when the author reports truthfully. The second to fourth highest expected overall utilities are achieved by the rankings $(2,1,3,4)$, $(1,3,2,4)$, and $(1,2,4,3)$ for binomial distributions, and $(1,3,2,4)$, $(2,1,3,4)$, and $(1,2,4,3)$ for Poisson distributions, respectively. These results suggest that the mechanism may be stable against slightly misspecified rankings.}
\label{fig:utility}
\end{figure}

A numerical illustration of this theorem is presented in Figure~\ref{fig:utility}. Consider a scenario with four submissions whose true scores are $(\mu_{1}^{\star},\mu_{2}^{\star},\mu_{3}^{\star},\mu_{4}^{\star})=(8,7,6,4)$. For each submission, three independent review scores are generated from either a binomial (with 10 trials) or Poisson distribution, and their average is used as the original review score. This figure exhibits the author's expected utility for all $4!=24$ possible rankings. As observed for both distributions, the expected utility is maximized when the author provides the true ranking $\pi^{\star}=(1,2,3,4)$.

While Theorem~\ref{thm:truth-telling} offers desired guarantees of truthfulness, the author might not be able to know the complete ranking of her papers. This motivates us to consider the scenario when, instead, the author knows only a \textit{coarse ranking}~\citep{su2022truthful}: given positive integers $n_{1},\ldots,n_{p}$ satisfying $n_{1}+\cdots+n_{p}=n$, let blocks $I_{1},\ldots,I_{p}$ be a partition of the indices ${1, \ldots, n}$ such that the size $|I_j| = n_{j}$ for $1 \le j \le p$; a vector $\ba \in \R^n$ is said to have coarse ranking $\bm{I}=(I_{1},\ldots,I_{p})$, denoted as
\[
\ba_{I_1} \ge \cdots \ge \ba_{I_p},
\]
if all entries of $\ba_{I_i}$ are larger than or equal to all entries of $\ba_{I_j}$ as long as $1 \le i < j \le p$.

In a variant of the Isotonic Mechanism, we require any author to provide a coarse ranking of her submissions, which amounts to replacing the second line of \eqref{eq:generalized-isotonic-regression} by
\[
\btheta_{I_1} \geq \btheta_{I_2} \geq \cdots \geq \btheta_{I_p}.
\]
This is equivalent to the following coarse Isotonic Mechanism:
\begin{equation}\label{eq:isotonic-regression-coarse}
\begin{aligned}
\mathop{\arg\min}_{\mu_1, \ldots, \mu_n}\qquad & \sum_{i=1}^{n}\left(X_{i}-\mu_{i}\right)^{2}\\
\text{s.t.}\qquad & \bmu_{I_1}\geq \bmu_{I_2} \geq \cdots\geq\bmu_{I_p}.
\end{aligned}
\end{equation}
When $n_1 = n_2 = \cdots = n_n = 1$, this mechanism reduces to the original Isotonic Mechanism. In general, the coarse Isotonic Mechanism provides the author with certain flexibility in that the author does not need to provide pairwise comparisons between some papers. For example, if $n_1 = 1$ and $n_2 = n - 1$, then the author is asked to report their best paper. In another example, $n_1 = \lfloor n/3 \rfloor$ and $n_2 = \lceil 2n/3 \rceil$, where $\lfloor \cdot \rfloor$ and $\lceil \cdot \rceil$ denote the floor and ceiling functions, respectively. The author is asked to report only the top one-third of her submissions.

The coarse Isotonic Mechanism is truthful, as we show in the following theorem. In this theorem, denote by $\mathsf{Util}(\bI)$ the expected overall utility of the solution to the coarse Isotonic Mechanism \eqref{eq:isotonic-regression-coarse}; define $\bm{I}^{\star}=(I_{1}^{\star},\cdots,I_{p}^{\star})$ as
\[
I_{j}^{\star}=\biggl\{\pi^{\star}\left(i\right): 1 + \sum_{k=1}^{j-1}n_{k} \leq i\leq\sum_{k=1}^{j}n_{k}\biggr\}
\]
for $j =1,2,\ldots,p$. This is the ground-truth coarse ranking with respect to $\bmu^\star$, which satisfies
\[
\bmu_{I_1^\star}^\star\geq \bmu_{I_2^\star}^\star \geq \cdots\geq\bmu^\star_{I_p^\star}
\]

\begin{theorem} \label{thm:coarse-optimality}
Under Assumptions \ref{ass:convex} and \ref{ass:author2}, the author maximizes her expected overall utility by truthfully reporting the coarse ground-truth ranking $\bI^\star$. That is,
\[
\mathsf{Util}(\bI^\star) \ge \mathsf{Util}(\bI)
\]
for any coarse ranking $\bI$ of size $(n_1, \ldots, n_p)$.
\end{theorem}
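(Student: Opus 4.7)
The plan is to reduce the coarse-ranking case to the full-ranking case (Theorem~\ref{thm:truth-telling}) by exploiting a structural equivalence between the two mechanisms. The key observation is that the coarse feasible set $K_{\bI} := \{\bmu : \bmu_{I_1} \geq \cdots \geq \bmu_{I_p}\}$ can be written as the union $K_{\bI} = \bigcup_{\pi\,\text{refines}\,\bI} K_\pi$, where $K_\pi := \{\bmu : \mu_{\pi(1)} \geq \cdots \geq \mu_{\pi(n)}\}$ and the union is over all full rankings $\pi$ that place the indices in $I_1$ before those in $I_2$, etc. Since projection onto a union of closed convex sets equals projection onto whichever component yields the smallest residual, I obtain the representation $\widehat{\bmu}^{\bI} = \widehat{\bmu}^{\pi^\bullet(\bI, \bX)}$, where $\pi^\bullet(\bI, \bX)$ is the (almost surely unique, by absolute continuity of the regular exponential family) refinement of $\bI$ that sorts within each block in decreasing order of $X_i$. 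This lemma, which I view as the central structural tool, recasts the coarse mechanism as a full mechanism applied with a data-dependent refinement.

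Given this representation, I would compare $\mathsf{Util}(\bI^\star)$ with $\mathsf{Util}(\bI)$ by combining Theorem~\ref{thm:truth-telling} with a transposition argument. First, note that the sum $\sum_i U(\widehat{\mu}_i^{\bI})$ is invariant under intra-block relabeling, so the expected utility depends only on the multiset of adjusted scores and not on any particular refinement chosen. Next, I would decompose the transition from $\bI$ to $\bI^\star$ into a sequence of elementary swaps, each exchanging one misplaced element with one correctly placed element between two adjacent blocks, so that each successive coarse ranking is "more aligned" with $\bmu^\star$. For each elementary swap, I would reduce to a two-index subproblem by conditioning on the remaining data, applying Theorem~\ref{thm:truth-telling} to that conditional subproblem and invoking the monotone likelihood ratio property of the exponential family to conclude that the swap in the correct direction weakly increases expected utility.

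The main obstacle will be step three: decoupling the local swap from the global isotonic regression, which may pool a swapped element with other indices in its block via the pool-adjacent-violators structure. To address this, I anticipate conditioning on the level-set partition induced by the isotonic regression and showing, level set by level set, that the local utility contribution is monotone under the swap. An alternative and possibly cleaner route is to bypass explicit swaps altogether: working inside the representation from the structural lemma, one can apply a rearrangement-type inequality to the order statistics of $X_i$ within blocks, showing directly that the expected utility, written as an expectation of $\sum_i U(\widehat{\mu}_i^{\pi^\bullet(\bI,\bX)})$, is maximized when the block structure agrees with the ordering of $\bmu^\star$, i.e., when $\bI = \bI^\star$.
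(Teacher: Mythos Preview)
Your structural lemma and swap decomposition are exactly right and match the paper: the coarse mechanism coincides with the full Isotonic Mechanism applied at the data-dependent refinement $\pi^\bullet(\bI,\bX)$ that sorts each block by decreasing $X$-value, and one chains from $\bI$ to $\bI^\star$ through elementary swaps (the paper calls these ``coarse upward swaps''; they need not be between adjacent blocks).

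The gap is in your step three. When you swap a single pair $a,b$ between blocks of $\bJ$ to form $\bI$, the two data-dependent refinements $\pi^\bullet(\bJ,\bX)$ and $\pi^\bullet(\bI,\bX)$ do \emph{not} differ by a single transposition: after $a$ lands in block $I_{k_1}$ and $b$ in $I_{k_2}$, each must be re-sorted against the other elements of its new block. So ``conditioning on the remaining data and applying Theorem~\ref{thm:truth-telling} to a two-index subproblem'' does not isolate a clean comparison, and your anticipated obstacle is genuine. The level-set conditioning and the rearrangement route you sketch as fallbacks are both vague, and it is not clear either can be made to work without essentially reinventing the paper's trick.

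The paper resolves this by inserting an \emph{intermediate} permutation $\pi_{\bJ\to\bI}$: take $\pi_{\bJ}:=\pi^\bullet(\bJ,\bX)$ and simply swap the positions of $a$ and $b$, \emph{without} re-sorting within blocks. Then (i) $\pi_{\bJ\to\bI}$ is a single upward swap of $\pi_{\bJ}$ in the full-$n$ sense, so the core inequality from the proof of Theorem~\ref{thm:truth-telling} (the $\Delta(x,y)$ argument combined with the exponential-family monotone likelihood ratio) gives $\mathsf{Util}(\pi_{\bJ\to\bI})\geq\mathsf{Util}(\pi_{\bJ})$ directly, with the global isotonic structure already built in---no decoupling needed; and (ii) $\pi_{\bJ\to\bI}$ and $\pi_{\bI}:=\pi^\bullet(\bI,\bX)$ share the same block contents, but $\pi_{\bI}$ sorts each block optimally by $X$, so $\pi_{\bI}\circ\bX\succeq_{\mathrm{no}}\pi_{\bJ\to\bI}\circ\bX$ pointwise, whence $\mathsf{Util}(\pi_{\bI})\geq\mathsf{Util}(\pi_{\bJ\to\bI})$ by the projection-majorization lemma and Hardy--Littlewood--P\'olya. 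Chaining (i) and (ii) gives the coarse swap inequality cleanly.
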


It is worth mentioning that in the peer review process, the author decides both the coarseness $(n_1,\ldots,n_p)$ and the reported coarse ranking. The above theorem asserts that for a given coarseness, the author maximizes her utility by truthfully reporting the coarse ranking, but it says nothing about the decision of the coarseness (e.g., the author might claim that the quality of her $n$ papers are indistinguishable).  In order to elicit as much truthful information as possible, in practice, the conference organizer might set a lower bound for $p$ for an author that depends on her number of submissions $n$.

\subsection{Is ranking optimal?}
\label{sec:necess-pairw-comp}

While we have shown truthfulness of eliciting a ranking or coarse ranking from the author, we aim to investigate the feasibility of truthfully eliciting other types of information for estimation. To achieve this, it is necessary to first formalize the types of information that conference organizers can elicit. Specifically, the $n$-dimensional Euclidean space $\R^n$, where the ground-truth vector $\bmu^\star$ resides, is partitioned into $\mathcal S := \{S_1, S_2, \ldots \}$ such that
\[
\bigcup_{S \in \mathcal S} S = \R^n, \quad \text{ and } S \bigcap S' = \emptyset
\]
for any $S, S' \in \mathcal S$ but $S \ne S'$. The cardinality of $\mathcal S$ can be either finite or infinite. As a collection of sets, $\mathcal S$ is referred to as a knowledge partition and a set $S \in \mathcal S$ is called a knowledge element~\citep{su2022truthful}. By definition, there must exist a unique knowledge element $S^\star$ that contains the ground-truth vector~$\bmu^\star$.

We focus on a class of knowledge partitions that are cut by hyperplanes. Specifically, a knowledge partition is cut by $(n-1)$-dimensional hyperplanes $H_1, \ldots, H_K$ if any knowledge element is enclosed by some of $H_1, \ldots, H_K$ and no other hyperplane passes through its interior. Such a knowledge partition is called \textit{linear}.


In the following variant of Assumption~\ref{ass:author2}, we assume that the author knows which is the \textit{true} knowledge element $S^\star$. This assumption reduces to Assumption~\ref{ass:author2} when $\mathcal S$ is the collection of all rankings.

\setcounter{assumptions}{1}
\renewcommand{\theassumptions}{\arabic{assumptions}'}
\begin{assumptions}\label{ass:author_e}
The author possesses sufficient understanding of her submissions to identify the specific knowledge element within the linear knowledge partition $\mathcal S$ that contains the ground-truth $\bmu^\star$.

\end{assumptions}

In this mechanism, the author is mandated to choose an element, say, $S$, from the linear knowledge partition $\mathcal S$ and claim that $S$ contains the ground-truth vector. This step is done before she observes the review scores $\bX = (X_1, \ldots, X_n)$. As earlier, $S$ may or may not be the true knowledge element $S^\star$. The conference organizers solve the following shape-restricted regression:\footnote{This optimization may be convex or not, depending on whether the set $S$ is convex or not.}
\begin{equation}\label{eq:projection-knowledge-partition}
\begin{aligned}
\mathop{\arg\min}_{\mu_1, \ldots, \mu_n}\qquad & \sum_{i=1}^{n}\left(X_{i}-\mu_{i}\right)^{2}\\
\text{s.t.}\qquad & \bmu\in S.
\end{aligned}
\end{equation}
Denote by $\widehat{\bmu}^S$ the unique minimizer of this convex optimization problem. As in Assumption~\ref{ass:convex}, the author seeks to maximize her expected overall utility
\begin{equation}\nonumber
\mathsf{Util}(S)=\mathbb{E}\left[\sum_{i=1}^{n}U(\widehat{\mu}^S_{i})\right]
\end{equation}
by reporting any knowledge element $S$ in the linear knowledge partition $\mathcal{S}$. We are interested in the class of linear knowledge partitions that incentivize the author to tell the truth.

\begin{definition} \label{defn:truthful}
A linear knowledge partition $\mathcal{S}$ is said to be truthful if, under Assumptions \ref{ass:convex} and \ref{ass:author_e}, the author could always maximize her expected overall utility by reporting the ground-truth knowledge element. That is,
\[
\mathsf{Util}(S^{\star})\geq\mathsf{Util}(S)
\]
for any $S \in \mathcal S$.

 \end{definition}

\begin{remark}
This truthfulness condition requires that it holds for any possible values of the parameters $\theta_1, \ldots, \theta_n$ in the exponential family distribution~\eqref{eq:exp}.
\end{remark}

In a nutshell, Theorems~\ref{thm:truth-telling} and \ref{thm:coarse-optimality} say that the collections of all rankings and, more generally, of all coarse rankings, are truthful. As a defining feature of these knowledge partitions, any (coarse) ranking is enclosed by \textit{pairwise-comparison} hyperplanes $x_i - x_j = 0$ for some $1 \le i < j \le n$. For better estimation, however, more fine-grained knowledge partitions are preferred as they narrow down the ``search space'' $S$ in \eqref{eq:projection-knowledge-partition}. Specifically, does there exist a truthful linear knowledge partition that is more fine-grained than rankings?

Our next main result shows that under certain regularity conditions, which are satisfied by many exponential family distributions as we will show in Section~\ref{sec:rank-almost-optim}, any linear truthful knowledge partition must be cut by pairwise-comparison hyperplanes up to an intercept.

\begin{theorem}\label{thm:necessary-condition-slope}
Assume that the image of $b'(\cdot)$ is $\mathbb{R}$, and
\begin{equation} \label{eq:growth-condition}
	\lim_{\theta\to-\infty} \frac{b'(\theta)}{\sqrt{b''(\theta)}} \to -\infty,\qquad \lim_{\theta\to\infty} \frac{b'(\theta)}{\sqrt{b''(\theta)}} \to \infty.
\end{equation} 
Consider any linear knowledge partition $\mathcal{S}$ cut by hyperplanes $H_1,\ldots,H_K$. If $\mathcal{S}$ is truthful, then for each $1\leq k \leq K$, $H_k$ must takes the form $x_{i_k} - x_{j_k} = \mathsf{const}$ for some $1 \le i_k < j_k \le n$.
\end{theorem}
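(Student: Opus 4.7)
The plan is to argue by contradiction. Suppose $\mathcal{S}$ is a truthful linear knowledge partition cut by hyperplanes $H_1, \ldots, H_K$, and that one of them, $H = \{\bx : \langle \bn, \bx\rangle = c\}$, has normal $\bn$ not proportional to any $\be_i - \be_j$. I will derive a contradiction by exhibiting a nondecreasing convex utility function $U$ (which Assumption~\ref{ass:convex} allows the author to choose) and a ground-truth $\bmu^\star$ at which misreporting the element adjacent to $S^\star$ across $H$ strictly beats truthful reporting.

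The first step is a local expansion near $H$. Let $S^\star, S \in \mathcal{S}$ be the two adjacent elements separated by $H$, with $S^\star$ on the $\langle \bn,\bx\rangle \geq c$ side. Solving \eqref{eq:projection-knowledge-partition} locally shows that the projection onto $S^\star$ equals $\bX$ when $\bX\in S^\star$ and is otherwise obtained by shifting $\bX$ along $\bn$ onto $H$, and symmetrically for $S$. Taylor-expanding $U$ in the small shift yields
\[
\mathsf{Util}(S^\star) - \mathsf{Util}(S) \;\approx\; \EE\!\left[w(\bX)\,\textstyle\sum_i n_i U'(X_i)\right],
\]
where $w(\bX) = |\langle\bn,\bX\rangle - c|/\|\bn\|^2 \ge 0$ is the distance from $\bX$ to $H$. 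Since the left side must be nonnegative on $S^\star$ and nonpositive on $S$ by truthfulness, continuity in $\bmu^\star$ across $H$ forces the leading-order identity $\sum_i n_i U'(\mu_i) = 0$ for every $\bmu \in H$, required for every nondecreasing convex $U$.

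The second step is to extract structure on $\bn$. Specializing $U(x) = \max(x - a, 0)$ (so that $U'(x) = \mathbf{1}\{x > a\}$ almost everywhere, with a smooth-approximation caveat) turns the identity into $\sum_i n_i \mathbf{1}\{\mu_i > a\} = 0$ for every $a \in \RR$ and every $\bmu \in H$. Viewed as a function of $a$, this is a step function whose jumps at $a = \mu_i$ have size $n_i$; to vanish identically, every jump must be zero unless it coincides with and cancels against another. Varying $\bmu$ within $H$---using the freedom provided by coordinates outside the support of $\bn$ to permute the ordering of the remaining coordinates---the only way all jump patterns cancel is for $H$ itself to force two coordinates to coincide with opposite $n$-values, i.e.\ $H = \{x_i - x_j = \mathsf{const}\}$ with $n_i = -n_j$ and all other $n_k = 0$. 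This contradicts the hypothesis that $\bn$ is not proportional to $\be_i - \be_j$.

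The main obstacle is upgrading the heuristic first-order expansion to a rigorous inequality, and this is where the growth condition \eqref{eq:growth-condition} enters twice. First, the image of $b'(\cdot)$ being $\RR$ guarantees that any chosen $\bmu^\star$ is realizable by a corresponding natural parameter in the exponential family. Second, the divergence $b'(\theta)/\sqrt{b''(\theta)}\to\pm\infty$ allows me to scale the components of $\bmu^\star$ so that the signal-to-noise ratio of $\bX$ becomes arbitrarily large; this concentrates $\bX$ sharply enough that higher-order terms in the Taylor expansion of $\sum_i U(X_i + \alpha n_i)$ cannot overwhelm the leading-order obstruction established above. Combining this concentration estimate with the combinatorial step-function argument yields an explicit $\bmu^\star$ and $U$ at which $\mathsf{Util}(S) > \mathsf{Util}(S^\star)$, contradicting truthfulness and completing the proof.
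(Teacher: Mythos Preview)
Your high-level structure (contradiction, then constrain the normal of a putative non-pairwise hyperplane) matches the paper, but the argument has real gaps that the paper's proof handles by a different mechanism.

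\textbf{Gap 1: the local projection formula.} You assert that $\mathcal{P}_{S^\star}(\bX)$ equals $\bX$ on one side and the orthogonal drop to $H$ on the other. This is only true when $\bX$ lies in a slab around the relative interior of the shared facet, away from every other hyperplane $H_{k'}$. Since $\bX$ is random with support typically all of $\RR^n$, a nonzero mass lands outside any such slab, and there the projection onto $S^\star$ or $S$ is governed by other facets in a way you never control. The paper removes this obstacle geometrically: it picks a direction $\bm u$ inside $H$ with $\bm a_{k'}^\top\bm u\neq 0$ for all $k'\neq k$ and slides the ground truth $\bm\omega+t\bm u$ out to infinity, so that the distance to every other hyperplane grows like $t$ while the distance to $H$ stays fixed. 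Only after this move can the simple projection formula be used, and only on an event whose complement is shown to be negligible after rescaling by $1/t$.

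\textbf{Gap 2: from the expected identity to the pointwise one.} Even granting your expansion, what continuity across $H$ gives is $\EE\bigl[\,|w(\bX)|\sum_i n_i U'(X_i)\bigr]=0$, not $\sum_i n_i U'(\mu_i^\star)=0$. You try to close this by ``scaling $\bmu^\star$ to make the SNR large,'' but you need $\bmu^\star$ to stay on (or infinitesimally near) $H$ while all coordinates blow up, and you also need the Taylor remainder to be uniformly dominated so you can pass to the limit under the expectation. None of this is set up. The paper sidesteps Taylor expansion entirely: after rescaling by $1/t$, the growth condition forces $(\bX-\bmu^\star)/t\to 0$ almost surely, and the continuous mapping theorem plus Hardy--Littlewood--P\'olya converts truthfulness directly into the weak-majorization statements $\bm u+\varepsilon\bm a_k\succeq_{\mathrm w}\bm u$ and $\bm u-\varepsilon\bm a_k\succeq_{\mathrm w}\bm u$ for all small $\varepsilon>0$. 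A short lemma (Lemma~6.3 of \cite{su2022truthful}) then says this pair of relations is impossible when $\bm u$ has distinct entries, which is exactly what the choice of $\bm u$ guarantees. Your step-function cancellation argument in Step~2 is morally a rediscovery of that lemma, but the route you take to its hypothesis is not yet a proof.
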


This theorem does not specify the intercepts of these hyperplanes. Although we hypothesize that these intercepts should be $0$, presenting a proof for general exponential family distributions seems difficult, and the analysis might require examination on a case-by-case basis. For instance, the following proposition illustrates that in the case of normal distributions, the intercepts must be $0$.

\begin{prop}\label{thm:necessary-condition-intercept}
Suppose $X_{i}\sim\mathcal{N}(\mu_{i}^{\star},\sigma^{2})$ independently for $i = 1, \ldots, n$, with $\sigma^{2}>0$. 
Consider any linear knowledge partition $\mathcal{S}$ cut by hyperplanes $H_1,\ldots,H_K$. If $\mathcal{S}$ is truthful, then for each $1\leq k \leq K$, $H_k$ must be a pairwise-comparison hyperplane that takes the form $x_{i_k} - x_{j_k} = 0$ for some $1 \le i_k < j_k \le n$.
\end{prop}

These results demonstrate that pairwise comparisons are essential for truthfulness, and the author must consider the principle that ``you can't have your cake and eat it too'' when providing information about her submission quality. The author can make pairwise comparisons by knowing any arbitrary monotone transformation of the coordinates of the ground-truth $\bmu^\star$, without needing to know the individual values.

All $n(n-1)/2$ pairwise-comparison hyperplanes produce $n!$ isotonic cones, each of which is parameterized by a ranking $\pi$ and takes the form ${\bmu: \mu_{\pi(1)} \ge \cdots \ge \mu_{\pi(n)}}$. No isotonic cone can be further divided by pairwise-comparison hyperplanes. Consequently, this theorem implies that the collection of rankings is the most fine-grained truthful linear knowledge partition, at least in the case of normally distributed scores. Whenever the author knows the ground-truth ranking or, equivalently, Assumption~\ref{ass:author2} is satisfied, the Isotonic Mechanism is optimal among all estimators of the form \eqref{eq:projection-knowledge-partition}.

\subsection{Optimal estimation}
\label{sec:optimal-estimation}

It remains unclear whether the Isotonic Mechanism, along with its truthfulness, enhances estimation. If so, quantifying the extent it surpasses the original review scores $\bX$ is vital. We begin addressing these questions by presenting the following simple result.

\begin{prop} \label{prop:projection}
Under Assumptions \ref{ass:convex} and \ref{ass:author2}, the solution $\widehat\bmu$ to the Isotonic Mechanism \eqref{eq:isotonic-regression-no-b} satisfies
\[
\EE \|\widehat\bmu - \bmu^\star\|^2 \le \EE \|\bX - \bmu^\star\|^2,
\]
where, throughout the paper, $\|\cdot\|$ denotes the $\ell_2$ norm.
\end{prop}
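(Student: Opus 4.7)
The plan is to recognize $\widehat\bmu$ as the Euclidean projection of $\bX$ onto a closed convex cone that contains the ground-truth $\bmu^\star$, and then invoke the classical Pythagorean (contraction) inequality for such projections. No distributional structure of the exponential family is needed beyond what guarantees that the expectations in the statement are finite.

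Concretely, I would first argue that under Assumptions~\ref{ass:convex} and \ref{ass:author2}, Theorem~\ref{thm:truth-telling} ensures the rational author reports the true ranking $\pi = \pi^\star$. The feasible set of \eqref{eq:isotonic-regression-no-b} is then
\[
\cC := \cbr{\bmu \in \R^n : \mu_{\pi^\star(1)} \ge \mu_{\pi^\star(2)} \ge \cdots \ge \mu_{\pi^\star(n)}},
\]
which is a closed convex cone containing $\bmu^\star$ by the definition of $\pi^\star$ in \eqref{eq:constraint} together with the monotonicity of $b'(\cdot)$. Because the objective in \eqref{eq:isotonic-regression-no-b} is the squared Euclidean distance to $\bX$, the unique minimizer $\widehat\bmu$ is precisely the Euclidean projection $\Pi_{\cC}(\bX)$.

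The key inequality is then purely deterministic: for any closed convex set $\cC$, any $\bX \in \R^n$, and any $\bmu^\star \in \cC$, the first-order optimality condition $\inner{\bX - \widehat\bmu}{\bmu^\star - \widehat\bmu} \le 0$ combined with the decomposition $\bX - \bmu^\star = (\bX - \widehat\bmu) + (\widehat\bmu - \bmu^\star)$ gives the Pythagorean-type bound
\[
\nbr{\widehat\bmu - \bmu^\star}^2 + \nbr{\bX - \widehat\bmu}^2 \le \nbr{\bX - \bmu^\star}^2.
\]
Dropping the nonnegative middle term and taking expectation over $\bX$ yields $\EE \nbr{\widehat\bmu - \bmu^\star}^2 \le \EE \nbr{\bX - \bmu^\star}^2$, as desired.

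There is essentially no hard step here: the only thing one needs to verify carefully is that truthful reporting indeed places $\bmu^\star$ in the feasible cone $\cC$, which is immediate from Assumption~\ref{ass:author2} and Theorem~\ref{thm:truth-telling}. Once that is settled, the bound follows from a standard property of projections onto closed convex sets and does not rely on the specific form of the exponential family distribution.
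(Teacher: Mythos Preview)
Your argument is correct and is precisely the standard projection/Pythagorean argument the paper has in mind when it calls this a ``simple result'' and does not spell out a proof: once truthfulness (Theorem~\ref{thm:truth-telling}) places $\bmu^\star$ inside the closed convex feasible cone of \eqref{eq:isotonic-regression-no-b}, the first-order optimality condition for Euclidean projection gives the pointwise inequality $\|\widehat\bmu - \bmu^\star\|^2 \le \|\bX - \bmu^\star\|^2$, and taking expectations finishes the job.
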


\begin{remark}
Due to the truthfulness of the mechanism, $\widehat\bmu$ is the solution to \eqref{eq:isotonic-regression-no-b} given the ground-truth ranking $\pi = \pi^\star$. The other results in this subsection are also under Assumptions \ref{ass:convex} and \ref{ass:author2}, but this fact will not be reiterated for the sake of simplicity. This theorem is also applicable to the coarse Isotonic Mechanism.
\end{remark}

Moreover, the improvement can be considerable in a certain regime. We assume that all coordinates $\mu_1^\star, \ldots, \mu_n^\star$ are lower and upper bounded by $V_{\min}$ and $V_{\max}$, respectively. This supplementary assumption is not restrictive when implementing the Isotonic Mechanism in machine learning conferences. For instance, all review scores must range from 1 to 10 for NeurIPS 2022 and 1 to 8 for ICML 2023.
\begin{prop} \label{prop:consistency}
Let the ground-truth scores $\mu_1^\star, \ldots, \mu_n^\star$ be arbitrary and satisfy $V_{\min} \le \mu_1^\star, \ldots, \mu_n^\star \le V_{\max}$ for constants $V_{\min} < V_{\max}$. As $n \to \infty$, the Isotonic Mechanism satisfies
\[
\limsup_{n \to \infty} \frac1{n}\EE \|\widehat\bmu - \bmu^\star\|^2 = 0.
\]

\end{prop}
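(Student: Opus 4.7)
The plan is to combine Theorem~\ref{thm:truth-telling} (truthfulness) with a classical consistency rate for isotonic regression on bounded monotone signals. First, by relabeling indices I may assume without loss of generality that $\pi^\star$ is the identity, so that $\mu_1^\star \ge \cdots \ge \mu_n^\star$. Under Assumptions~\ref{ass:convex} and \ref{ass:author2}, Theorem~\ref{thm:truth-telling} guarantees that the author submits $\pi^\star$, and hence $\widehat\bmu$ is the Euclidean projection of $\bX$ onto the isotonic cone $\mathcal{K}_n := \{\bmu \in \RR^n : \mu_1 \ge \cdots \ge \mu_n\}$, which contains $\bmu^\star$.

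Next I would control the noise uniformly. Since $b'$ is strictly increasing and continuous (because $b''>0$), its inverse sends $[V_{\min}, V_{\max}]$ onto a compact interval $[\theta_{\min}, \theta_{\max}]$ on which $b''$ attains a finite maximum $\sigma^2$. Hence $\Var(X_i) = b''(\theta_i^\star) \le \sigma^2$ for every $i$, and because the cumulant generating function of $X_i - \mu_i^\star$ remains finite in a fixed neighborhood of $0$ that does not depend on $i$, the centered scores are uniformly sub-exponential.

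With uniformly sub-exponential noise in place, I would invoke a classical oracle-type risk bound for isotonic regression (such as those of Zhang (2002) or Chatterjee--Guntuboyina--Sen (2015)) of the form
\begin{equation*}
\EE \|\widehat\bmu - \bmu^\star\|^2 \;\le\; C \inf_{\tilde\bmu \in \mathcal{K}_n} \bigl\{ \|\tilde\bmu - \bmu^\star\|^2 + \sigma^2\, k(\tilde\bmu)\log(en/k(\tilde\bmu)) \bigr\},
\end{equation*}
where $k(\tilde\bmu)$ denotes the number of constant pieces of $\tilde\bmu$. Partitioning $[V_{\min}, V_{\max}]$ into $K$ equal subintervals and setting $\tilde\mu_i$ to the midpoint of the subinterval containing $\mu_i^\star$ produces a monotone $\tilde\bmu \in \mathcal{K}_n$ with at most $K$ constant pieces and $\|\tilde\bmu - \bmu^\star\|^2 \le n(V_{\max} - V_{\min})^2/(4K^2)$. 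Choosing $K = \lceil n^{1/3} \rceil$ then yields $\EE \|\widehat\bmu - \bmu^\star\|^2 = O(n^{1/3}\log n) = o(n)$, uniformly over all bounded monotone $\bmu^\star$, which gives the claimed $\limsup = 0$ after dividing by $n$.

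The main obstacle I anticipate is justifying the displayed oracle inequality for general exponential family noise rather than Gaussian or sub-Gaussian. I expect this to be resolved by re-running the standard chaining / statistical-dimension argument on the tangent cone of $\mathcal{K}_n$ at $\tilde\bmu$ (which decomposes as a direct product of isotonic cones on the constant blocks of $\tilde\bmu$), replacing the Gaussian tail estimate with a Bernstein-type bound legitimate under the uniform sub-exponentiality established in the noise-control step; any additional $\log n$ factor introduced does not affect the $o(n)$ conclusion.
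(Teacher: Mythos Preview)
Your proposal is correct and leads to the same $O(n^{1/3}\log n)$ rate the paper obtains, but it takes a slightly longer detour than necessary. The paper does not prove this proposition separately; it is an immediate corollary of the upper bound in Theorem~\ref{thm:general_minimax_informal} (proved in Section~\ref{sec:proof-minimax}), which is obtained by directly citing \citet[Theorem~2.2]{zhang2002risk}. That result is a total-variation risk bound of the form
\[
\EE\|\widehat\bmu-\bmu^\star\|^2 \;\lesssim\; n\sigma^2\min\Bigl\{1,\Bigl(\tfrac{V_{\max}-V_{\min}}{n\sigma}\Bigr)^{2/3}+\tfrac{\log n}{n}\Bigr\},
\]
after which dividing by $n$ gives the claim. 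Your route via the piecewise-constant oracle inequality and optimizing the number of blocks $K\asymp n^{1/3}$ is the standard alternative derivation of the same rate, so the two arguments are close cousins.

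The one place where the paper's route is genuinely simpler is your anticipated obstacle about noise tails. Zhang's Theorem~2.2 requires only $\EE[\varepsilon_i]=0$ and $\EE[\varepsilon_i^2]\le\sigma^2$, both of which follow immediately from $b''(\theta_i^\star)\le\sigma^2$ on the compact parameter interval. No sub-exponential moment control, Bernstein inequalities, or chaining on tangent cones is needed. So while your plan to establish uniform sub-exponentiality and rerun the statistical-dimension argument would work, it is heavier machinery than the problem demands; you can simply invoke Zhang's second-moment version and be done.
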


This proposition demonstrates that the (truthful) ranking constraint of the Isotonic Mechanism allows for estimation consistency. In stark contrast, the original review scores generally obey
\[
\frac1n \EE \|\bX - \bmu^\star\|^2 =  \frac{\sum_{i=1}^n b''(\theta_i^\star)}{n} \centernot\longrightarrow 0
\]
in the limit $n \to \infty$.

Next, we establish that the Isotonic Mechanism not only surpasses the original scores but also attains nearly minimax estimation among all estimators under certain conditions. We employ $\sup_{V_{\min} \le \bmu^{\star} \le V_{\max}}$ as a shorthand for $\sup_{V_{\min} \le \mu^{\star}_i \le V{\max}, \; \forall 1 \le i \le n}$. Let
\[
\sigma^{2}\coloneqq\max_{\theta_{\min}\leq\theta\leq\theta_{\max}}b''\left(\theta\right),
\]
where $\theta_{\max}=(b')^{-1}(V_{\max})$ and $\theta_{\min}=(b')^{-1}(V_{\min})$. The quantity $\sigma^2$ represents the highest variance of review scores when the ground-truth score resides within $[V_{\min},V_{\max}]$.

\begin{theorem}\label{thm:general_minimax_informal} 
Under certain regularity conditions, there exist two constants $C_{1},C_{2}>0$ such that 
\begin{equation}
		\sup_{V_{\min} \le \bmu^{\star} \le V_{\max}}\,\mathbb{E} \left\Vert \widehat{\bmu}-\bm{\mu}^{\star}\right\Vert^{2}\leq C_{1}n\sigma^{2}\min\left\{ 1,\left(\frac{V_{\max}-V_{\min}}{n\sigma}\right)^{2/3}+\frac{\log n}{n}\right\} ,\label{eq:general_minimax_upper_informal}
\end{equation}
where $\widehat{\bm{\mu}}$ is the optimal solution to (\ref{eq:isotonic-regression-no-b}), and
\begin{equation}
		\inf_{\widetilde{\bm{\mu}}} \sup_{V_{\min} \le \bmu^{\star} \le V_{\max}} \mathbb{E}\left\Vert \widetilde{\bm{\mu}}-\bm{\mu}^{\star}\right\Vert^{2}\geq C_{2}n\sigma^{2}\min\left\{ 1,\left(\frac{V_{\max}-V_{\min}}{n\sigma}\right)^{2/3}+\frac{1}{n}\right\} ,\label{eq:general_minimax_lower_informal}
\end{equation}
where the infimum is taken over all estimators of the true mean vector $\bm{\mu}^{\star}$ using data $X_{1},\ldots,X_{n}$. 
	
\end{theorem}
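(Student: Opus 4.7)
The plan is to attack the upper and lower bounds separately, in each case exploiting that, by Theorem~\ref{thm:truth-telling}, the author truthfully reports $\pi^\star$, so $\widehat{\bm{\mu}}$ is the $\ell_2$ projection of $\bX$ onto the monotone cone $\mathcal{C}(\pi^\star) = \{\bm{\mu}: \mu_{\pi^\star(1)} \geq \cdots \geq \mu_{\pi^\star(n)}\}$, a cone that contains $\bmu^\star$. Reindexing so that $\mu_1^\star \geq \cdots \geq \mu_n^\star$, the feasible set becomes the standard monotone cone $\mathcal{M}_n = \{\bmu: \mu_1 \geq \cdots \geq \mu_n\}$, and $\widehat{\bm{\mu}}$ is classical isotonic least squares applied to the exponential family sample $\bX$. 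Two baseline facts drive everything: $\mathsf{Var}(X_i) = b''(\theta_i^\star) \leq \sigma^2$ because $\theta_i^\star \in [\theta_{\min}, \theta_{\max}]$, and under the implicit regularity (uniform boundedness of $b''$ on this compact range), the centered observations $\varepsilon_i := X_i - \mu_i^\star$ are sub-Gaussian with proxy $O(\sigma)$ via a standard CGF bound for exponential families.

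For the upper bound, the trivial contraction inequality $\mathbb{E}\|\widehat{\bmu} - \bmu^\star\|^2 \leq \mathbb{E}\|\bX - \bmu^\star\|^2 \leq n\sigma^2$ already supplies the $n\sigma^2$ factor outside the $\min$. To obtain the $(V/(n\sigma))^{2/3}$ rate with $V := V_{\max}-V_{\min}$, I would start from the basic inequality $\|\widehat{\bmu} - \bmu^\star\|^2 \leq 2\langle \bm{\varepsilon}, \widehat{\bmu} - \bmu^\star\rangle$ and bound the Gaussian-type process $\sup_{\bm{v} \in \mathcal{M}_n - \bmu^\star,\, \|\bm{v}\| \leq r} \langle \bm{\varepsilon}, \bm{v}\rangle$ by chaining against the classical Birg\'e--Massart metric-entropy estimate for monotone sequences; optimizing in $r$ yields the exponent $2/3$, with the prefactor scaling as $V$. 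The additional $\log n / n$ refinement is obtained by the adaptive bound of Chatterjee--Guntuboyina--Sen: for every piecewise-constant monotone approximation $\widetilde{\bmu}$ of $\bmu^\star$ with $K$ blocks, $\mathbb{E}\|\widehat{\bmu}-\bmu^\star\|^2 \lesssim \|\widetilde{\bmu}-\bmu^\star\|^2 + \sigma^2 K \log(en/K)$; in the worst case a single block suffices up to a logarithmic loss, giving the $\sigma^2 \log n$ term. Combining the two bounds with the trivial bound yields the stated minimum.

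For the lower bound, the $(V/(n\sigma))^{2/3}$ part follows from an Assouad-type construction: partition $\{1,\ldots,n\}$ into $m \asymp (nV^2/\sigma^2)^{1/3}$ consecutive blocks of equal size and consider $2^m$ monotone hypotheses obtained by perturbing a nearly linear baseline $\bmu^{(0)}$ by a staircase of signed increments of magnitude $\delta \asymp \sigma/\sqrt{n/m}$ that preserve monotonicity and stay in $[V_{\min}, V_{\max}]$; a standard Fano/Assouad computation using the KL bound $\mathrm{KL}(p_{\theta}\,\|\,p_{\theta'}) \lesssim (\theta - \theta')^2 \sigma^2 \asymp (\mu-\mu')^2/\sigma^2$ gives the claimed rate. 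The additive $\sigma^2$ term is obtained via a two-point Le Cam argument between constant hypotheses $(\mu,\ldots,\mu)$ and $(\mu+\sigma/\sqrt n,\ldots,\mu+\sigma/\sqrt n)$, which shows that no estimator can beat $\sigma^2$ on the mean of a constant sequence.

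The hard part will be porting the Gaussian-noise isotonic arguments to the exponential family setting: the entropy/chaining bound on $\sup \langle \bm{\varepsilon}, \bm{v} \rangle$ requires a genuine sub-Gaussian tail for $\varepsilon_i$, not just a second-moment bound. Under the stated regularity (bounded $b''$ on $[\theta_{\min},\theta_{\max}]$, equivalently bounded $\sigma$), such sub-Gaussianity follows from a uniform quadratic upper bound on the CGF $b(\theta_i^\star + t)-b(\theta_i^\star) - t \mu_i^\star$, and Talagrand's generic chaining then ports the classical argument verbatim. A secondary subtlety worth flagging is the $\log n$ gap between the upper and lower bounds, which is the well-known adaptivity gap in isotonic regression rather than a defect of the analysis, and is faithfully recorded by the $\log n / n$ versus $1/n$ discrepancy in the theorem's statement.
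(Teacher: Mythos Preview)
Your lower-bound plan is essentially the paper's: the paper also splits into a ``large $n$'' case handled by a packing construction with $k\asymp (nV^2/\sigma^2)^{1/3}$ blocks (Varshamov--Gilbert plus Fano rather than Assouad, but these are interchangeable here) and a ``small $n$'' case handled by a Le Cam two-point argument between constant sequences separated by $c\sigma/\sqrt{n}$. Your KL bound $\mathrm{KL}(p_\theta\,\|\,p_{\theta'})\lesssim (\mu-\mu')^2/\sigma^2$ is exactly the computation the paper carries out, and this is where Assumption~\ref{assumption:variance} (a two-sided control on $b''$) enters, to convert $(\theta-\theta')^2$ into $(\mu-\mu')^2/\sigma^4$.

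The upper bound, however, has a genuine gap. Your chaining argument and your appeal to the Chatterjee--Guntuboyina--Sen oracle inequality both rest on the claim that boundedness of $b''$ on the compact interval $[\theta_{\min},\theta_{\max}]$ makes $\varepsilon_i=X_i-\mu_i^\star$ sub-Gaussian with proxy $O(\sigma)$. That claim is false for several of the exponential families the theorem is explicitly meant to cover. The cumulant generating function of $\varepsilon_i$ is $t\mapsto b(\theta_i^\star+t)-b(\theta_i^\star)-t\,b'(\theta_i^\star)$, and a global quadratic bound in $t$ would require $b''$ to be bounded on the \emph{entire} natural parameter space, not just on $[\theta_{\min},\theta_{\max}]$. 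For Poisson, $b(\theta)=e^\theta$ and the CGF grows like $e^t$; for Gamma the CGF blows up at a finite $t$. In both cases the noise is only sub-exponential, so the sub-Gaussian chaining and the standard form of the adaptive oracle inequality do not apply. The paper sidesteps this entirely: it invokes \citet[Theorem~2.2]{zhang2002risk}, whose risk bound for isotonic least squares requires only $\mathbb{E}[\varepsilon_i]=0$ and $\mathbb{E}[\varepsilon_i^2]\le\sigma^2$---no tail assumption whatsoever---and this is precisely what the uniform bound $b''(\theta)\le\sigma^2$ on $[\theta_{\min},\theta_{\max}]$ delivers. If you want to salvage your route you would need either a Bernstein-type chaining argument tailored to sub-exponential increments, or to reprove Zhang's moment-based bound; as written, the sub-Gaussian step does not go through.
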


The formal statement of Theorem~\ref{thm:general_minimax_informal} is provided in Theorem \ref{thm:general_minimax} in Section \ref{sec:proof-minimax}. The regularity condition is mild and can be satisfied by many exponential family distributions, including Gaussian, binomial, Poisson, and Gamma distributions. The upper bound \eqref{eq:general_minimax_upper_informal} is developed in \citet{zhang2002risk}. We obtain the lower bound \eqref{eq:general_minimax_lower_informal} by extending the proof technique for the additive Gaussian noise setting in \citet{bellec2015sharp} to general exponential family distributions.

Theorem \ref{thm:general_minimax_informal} demonstrates that the Isotonic Mechanism achieves minimax-optimal estimation, at most up to a logarithmic factor. This logarithmic factor would not be in effect when the total variation $V_{\max}-V_{\min}$ remains constant and $n$ is large. The boundedness of the total variation indeed holds for peer review in machine learning conferences, as the scores must be integers between 1 and, for example, 8 or 10. In this case, the minimax estimation risk is $O((V_{\max}-V_{\min})^{\frac23} n^{\frac13} \sigma^{\frac43})$, and this mechanism precisely achieves this rate. For comparison, the $\ell_2$ risk of using the original scores $\bX$ is $n \sigma^2$. This shows that the Isotonic Mechanism significantly outperforms the original scores in terms of estimation when the number of submissions $n$ is large and the noise level $\sigma$ is high.

\begin{figure}[t]
	\centering
	
	\begin{tabular}{cc}
		\includegraphics[scale=0.45]{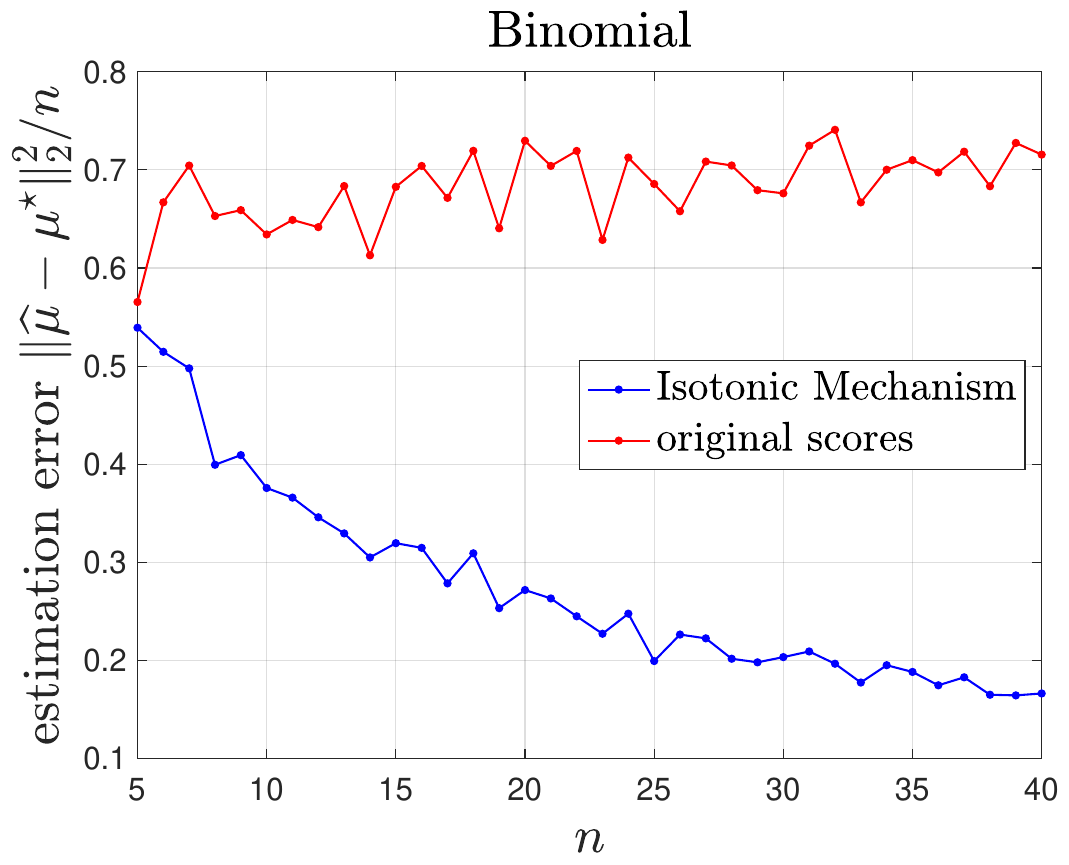} & \includegraphics[scale=0.45]{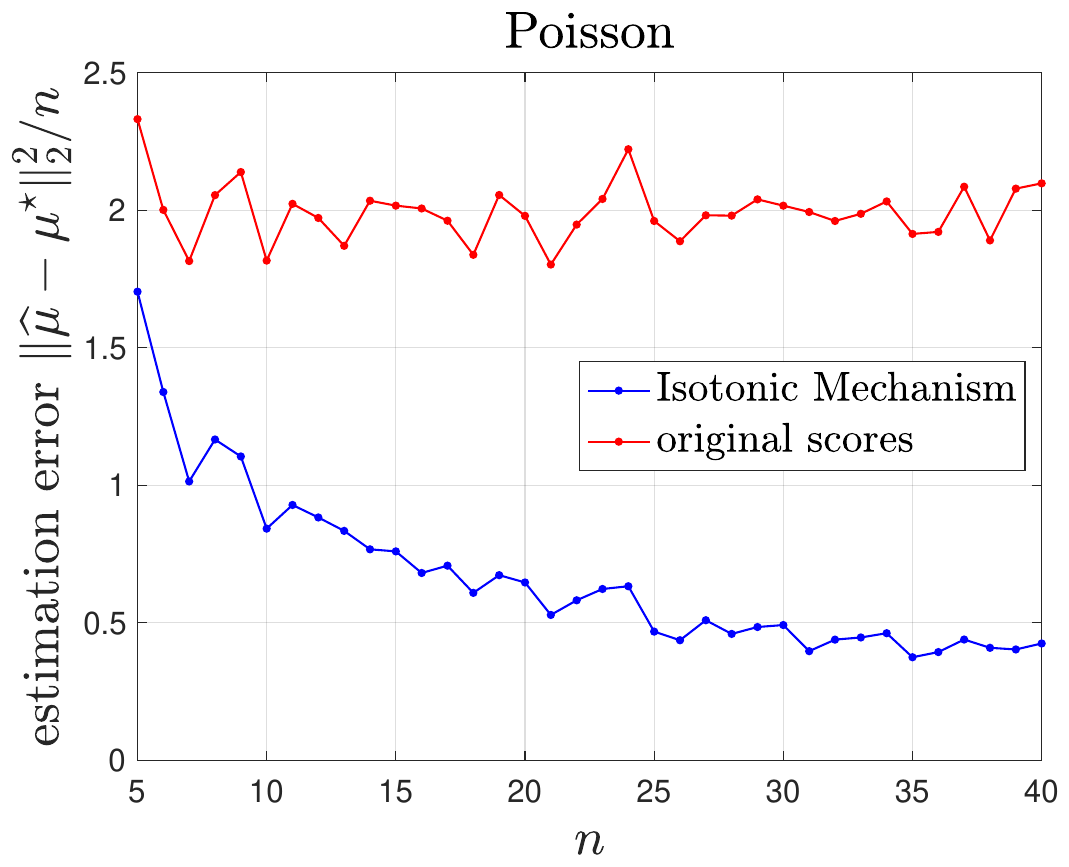}\tabularnewline
		$\quad\quad$(a) & $\quad\quad$(b)\tabularnewline
	\end{tabular}
	
	\caption{Estimation errors $\Vert\widehat{\bm{\mu}}-\bm{\mu}^{\star}\Vert^{2}/n$ for the Isotonic Mechanism and $\Vert\bX-\bm{\mu}^{\star}\Vert^{2}/n$ for the original review scores, with varying numbers of submissions. The left panel presents results when the review scores follow binomial distributions, while the right panel exhibits results when the review scores follow Poisson distributions, in the same setting as the numerical experiment in Section \ref{subsec:truthfulness}. For any $n$, we set $\mu_{i}^{\star}=9-6(i-1)/(n-1)$ for $i =1, \ldots, n$.}
\label{fig:estimation_err}
\end{figure}

Figure \ref{fig:estimation_err} presents a numerical experiment comparing the estimation accuracy of the Isotonic Mechanism and the averaged review scores, using the same setting as in Section \ref{subsec:truthfulness}. The Isotonic Mechanism is significantly more accurate than the review scores.

\subsection{Related work}
\label{sec:related-work}

The declining quality of peer review in machine learning conferences has been increasingly recognized, resulting in a flurry of research activity toward improving reviewer assignments \citep{kobren2019paper,jecmen2020mitigating,leyton2024matching}, and bias reduction \citep{paul1981bayesian,mackay2017calibration,wang2018your}, among other things; see \citet{shah2022challenges} for a survey.

The most related work is \cite{su2021you} and \cite{su2022truthful}, which consider the same problem as the present paper, but in the setting where the review scores follow
\begin{equation}\label{eq:add_noise}
X_i = \mu_i^\star + z_i
\end{equation}
with i.i.d.~noise terms $z_1, \ldots, z_n$. Specifically, \cite{su2021you} proposes the Isotonic Mechanism and proves its truthfulness when applied to \eqref{eq:add_noise}. In \cite{su2022truthful}, the author further shows that pairwise comparisons are necessary for the truthfulness of the general mechanism \eqref{eq:projection-knowledge-partition} in the setting of the additive-noise model \eqref{eq:add_noise}.

However, it is worth noting that the techniques developed in \cite{su2021you} and \cite{su2022truthful} are not sufficient to cope with general exponential family distributions. In more detail, the proof of truthfulness in \cite{su2021you} for the additive-noise model \eqref{eq:add_noise} heavily relies on a coupling technique based on the exchangeability of the noise terms. In the case of general exponential family distributions, the noise distribution and particularly, the variance of the noise, vary with the mean. Our proof of Theorems~\ref{thm:truth-telling} and \ref{thm:coarse-optimality} employs a novel technique that addresses the mean-variance dependence.

Moreover, the proof of Theorem~\ref{thm:necessary-condition-slope} is much more sophisticated for exponential family distributions than that for the additive-noise model~\eqref{eq:add_noise} in \cite{su2022truthful}. This distinction arises from the mean-variance dependence in exponential family distributions that prevents us from constructing certain solutions to the general mechanism \eqref{eq:projection-knowledge-partition}, essential to the proof in \cite{su2022truthful}. To tackle this challenge, we leverage a technique based on symmetric distributions, which may be a contribution of independent interest.

From a different angle, our approach connects to the literature on cheap talk \citep{chakraborty2007comparative} in game theory and mechanism design. Cheap-talk games consider a privately informed sender (author) whose message (or ``talk'') does not directly affect payoffs, rendering the sender's statements non-binding. However, in our setup, the receiver (conference organizers) commits to utilizing the reported ranking in the Isotonic Mechanism to adjust scores that \emph{do} affect payoffs. Thus, the author's disclosure cannot be considered ``cheap'' as the ranking is binding through its direct impact on final outcomes. Moreover, the conference organizers also observe review scores, which further differentiates our work from the standard setting of cheap talk. While our mechanism shares the spirit of ``information elicitation'' inherent in cheap talk, both the binding nature of the final score adjustment and the presence of additional feedback (the $X_i$'s) position the Isotonic Mechanism beyond the scope of cheap talk.


 
\section{Truthfulness}
\label{sec:truthfulness}

In this section, we prove Theorems~\ref{thm:truth-telling} and \ref{thm:coarse-optimality}. As we shall see soon, our proofs make use of several technique novelties that are not found in \cite{su2021you, su2022truthful} in order to cope with the dependence between mean and variance for exponential family distributions.

\subsection{Preliminaries}

We start by collecting some elements that will be useful for the proofs.

For any vector $\bm{a}\in\mathbb{R}^{n}$, let 
\[
a_{[1]}\geq a_{[2]}\geq\cdots\geq a_{[n]}
\]
denote the components of $\bm{a}$ in descending order. Let $\mathcal{C}$
denote the standard isotonic cone 
\begin{equation}
\mathcal{C} \coloneqq\left\{ \bm{x}\in\mathbb{R}^{n}:x_{1}\geq x_{2}\geq\cdots\geq x_{n}\right\}.\label{eq:C-defn}
\end{equation}
As is clear, $\mathcal{C}$ is a convex cone. Let $\mathcal{P}_{\mathcal{C}}:\mathbb{R}^{n}\to\mathcal{C}$
denote the Euclidean projection onto the standard isotonic cone. That is, for any $\bm{x}\in\mathbb{R}^{n}$,
\begin{equation}
\mathcal{P}_{\mathcal{C}}\left(\bm{x}\right)\coloneqq\underset{\bm{y}\in\mathcal{C}}{\arg\min}\left\Vert \bm{x}-\bm{y}\right\Vert^{2}.\label{eq:projection-C}
\end{equation}

Next, we introduce two types of majorization.

\begin{definition}[\citet{marshall1979inequalities}]\label{defn:majorization}For any two vectors
$\bm{a},\bm{b}\in\mathbb{R}^{n}$, we say that $\bm{a}$ majorizes
$\bm{b}$, denoted $\bm{a}\succeq \bm{b}$, if
\[
\sum_{i=1}^{k}a_{[i]}\geq\sum_{i=1}^{k}b_{[i]}
\]
holds for all $1\leq k\leq n$, with equality when $k = n$.
\end{definition}

\begin{definition}[\cite{su2022truthful}]\label{defn:no-majorization}For any two vectors
$\bm{a},\bm{b}\in\mathbb{R}^{n}$, we say that $\bm{a}$ majorizes
$\bm{b}$ in the natural order, denoted $\bm{a}\succeq_{\mathrm{no}}\bm{b}$,
if
\[
\sum_{i=1}^{k}a_{i}\geq\sum_{i=1}^{k}b_{i}
\]
holds for all $1\leq k\leq n$, with equality when $k = n$.
\end{definition}

The following two lemmas show how the two majorization definitions relate to each other and showcase the connection between the first definition and the convex utility \eqref{eq:util_form} in Assumption~\ref{ass:convex}.

\begin{lemma} \label{lemma:no-majorization-projection}
For any two vectors $\bm{a},\bm{b}\in\mathbb{R}^{n}$, if $\bm{a}\succeq_{\mathrm{no}}\bm{b}$,
then $\mathcal{P}_{\mathcal{C}}(\bm{a})\succeq \mathcal{P}_{\mathcal{C}}(\bm{b})$.
\end{lemma}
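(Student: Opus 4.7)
The plan is to exploit the classical characterization of projection onto the non-increasing cone $\mathcal{C}$ via the least concave majorant (LCM) of the cumulative-sum process, combined with the fact that LCM is monotone with respect to pointwise domination.

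First, I will set up notation: for any $\bm{x}\in\mathbb{R}^n$, define the partial sums $S_k(\bm{x})=\sum_{i=1}^k x_i$ and $T_k(\bm{x})=\sum_{i=1}^k (\mathcal{P}_{\mathcal{C}}(\bm{x}))_i$, with $S_0=T_0=0$. Two preliminary facts: (i) Since $\mathcal{P}_{\mathcal{C}}(\bm{x})\in\mathcal{C}$, its components are already in non-increasing order, so $(\mathcal{P}_{\mathcal{C}}(\bm{x}))_{[i]}=(\mathcal{P}_{\mathcal{C}}(\bm{x}))_i$. Hence the majorization $\mathcal{P}_{\mathcal{C}}(\bm{a})\succeq\mathcal{P}_{\mathcal{C}}(\bm{b})$ that we want is just $T_k(\bm{a})\ge T_k(\bm{b})$ for $1\le k\le n$ with equality at $k=n$. (ii) Projection onto $\mathcal{C}$ preserves the total sum, i.e.\ $T_n(\bm{x})=S_n(\bm{x})$, which follows from the KKT conditions since $\mathcal{C}$ contains the lineality direction $\one$, so the equality-at-$k=n$ part is immediate from the hypothesis $S_n(\bm{a})=S_n(\bm{b})$.

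The core of the argument is the classical fact that the piecewise-linear interpolant of $\{(k,T_k(\bm{x}))\}_{k=0}^n$ is the LCM of the piecewise-linear interpolant of $\{(k,S_k(\bm{x}))\}_{k=0}^n$ on $[0,n]$. This is a standard reformulation of the pool-adjacent-violators solution to isotonic regression, and I would either cite it from \citet{barlow1972isotonic} or derive it in one or two lines using the KKT/duality conditions for the quadratic program \eqref{eq:projection-C}. Coupled with this is the elementary monotonicity of LCM: if $f\ge g$ pointwise on $[0,n]$, then any concave function dominating $f$ also dominates $g$, so $\mathrm{LCM}(f)\ge\mathrm{LCM}(g)$.

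Assembling the pieces: the hypothesis $\bm{a}\succeq_{\mathrm{no}}\bm{b}$ gives $S_k(\bm{a})\ge S_k(\bm{b})$ for all $k$, hence the LCMs satisfy $T_k(\bm{a})\ge T_k(\bm{b})$ for all $k$; at $k=n$ we have equality by the sum-preservation in (ii); and by (i) this is exactly the desired majorization $\mathcal{P}_{\mathcal{C}}(\bm{a})\succeq\mathcal{P}_{\mathcal{C}}(\bm{b})$. The one step that needs the most care is verifying the LCM characterization of the projection rather than taking it as a black box, since different sign conventions (non-increasing vs.\ non-decreasing) flip LCM to GCM; beyond that bookkeeping, the proof is essentially a two-line chain of implications.
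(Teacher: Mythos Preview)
Your argument is correct. The paper does not give a self-contained proof of this lemma; it simply cites \citet[Lemma 6.5]{su2022truthful}. Your route via the least-concave-majorant characterization of the projection onto $\mathcal{C}$ is the standard way to establish this result and is fully valid: the cumulative-sum/LCM description of the isotonic projection is classical (Barlow et al.), the monotonicity of the LCM operator under pointwise domination is immediate from its definition as an infimum over dominating concave functions, and sum-preservation of the projection handles the equality at $k=n$. The only caveat you already flagged---keeping the non-increasing convention straight so that LCM (not GCM) is the right object---is the only place to be careful, and you have it right.
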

\begin{proof}
	See \citet[Lemma 6.5]{su2022truthful}.
\end{proof}

\begin{lemma}[Hardy--Littlewood--P\'olya inequality] \label{lemma:hardy-littlewood}For any two vectors
$\bm{a},\bm{b}\in\mathbb{R}^{n}$, the inequality 
\[
\sum_{i=1}^{n}h\left(a_{i}\right)\geq\sum_{i=1}^{n}h\left(b_{i}\right)
\]
holds for any convex function $h:\mathbb{R}\to\mathbb{R}$
if and only if $\bm{a}\succeq \bm{b}$. \end{lemma}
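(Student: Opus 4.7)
The plan is to prove the two directions separately, noting first that both sides of the claimed inequality depend only on the multisets $\{a_i\}$ and $\{b_i\}$, so we may reorder the coordinates in descending order and assume throughout that $a_i = a_{[i]}$ and $b_i = b_{[i]}$.

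For the ``if'' direction ($\bm{a} \succeq \bm{b}$ implies $\sum_i h(a_i) \ge \sum_i h(b_i)$ for every convex $h$), I would combine the subgradient inequality with Abel summation. Pick any subgradients $g_i \in \partial h(b_i)$; since $h$ is convex the operator $\partial h$ is monotone, so $b_i \ge b_{i+1}$ forces $g_i \ge g_{i+1}$. The subgradient inequality gives $h(a_i) - h(b_i) \ge g_i(a_i - b_i)$, reducing the problem to showing $\sum_{i=1}^n g_i(a_i - b_i) \ge 0$. Setting $S_k := \sum_{i=1}^k (a_i - b_i)$, Abel summation yields
\[
\sum_{i=1}^n g_i(a_i - b_i) \;=\; \sum_{i=1}^{n-1}(g_i - g_{i+1})\, S_i \,+\, g_n\, S_n.
\]
By the majorization hypothesis $S_i \ge 0$ for $i < n$ and $S_n = 0$, while $g_i - g_{i+1} \ge 0$ by convexity; hence every term on the right is nonnegative and the conclusion follows.

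For the ``only if'' direction I would specialize the hypothesis to carefully chosen convex test functions. Applying it to the (convex) linear functions $h(x) = x$ and $h(x) = -x$ immediately gives $\sum_i a_i = \sum_i b_i$, which is the $k = n$ equality required by Definition~\ref{defn:majorization}. For each $1 \le k < n$, I would then plug in the convex hinge $h(x) = (x - a_{[k]})^+$. Since coordinates $a_{[i]} \le a_{[k]}$ for $i \ge k$ contribute zero to the left-hand side (including any ties at $a_{[k]}$, which contribute zero either way), the left-hand side collapses to $\sum_{i=1}^n (a_{[i]} - a_{[k]})^+ = \sum_{i=1}^k a_{[i]} - k a_{[k]}$. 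For the right-hand side, the elementary pointwise bound $(x - t)^+ \ge x - t$ applied to the $k$ largest values of $\bm{b}$ gives
\[
\sum_{i=1}^n (b_i - a_{[k]})^+ \;\ge\; \sum_{i=1}^k (b_{[i]} - a_{[k]})^+ \;\ge\; \sum_{i=1}^k b_{[i]} - k a_{[k]}.
\]
Cancelling the common offset $-k a_{[k]}$ on the two sides yields $\sum_{i=1}^k a_{[i]} \ge \sum_{i=1}^k b_{[i]}$, completing this direction.

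The only mildly delicate points I anticipate are nondifferentiability of $h$ in the ``if'' direction (handled by selecting any monotone collection of one-sided derivatives as subgradients, which always exist for convex $h$) and possible ties among the sorted entries in the ``only if'' direction (handled by continuity of $t \mapsto \sum_i (x_i - t)^+$, so the threshold $a_{[k]}$ can be replaced by $a_{[k]} - \varepsilon$ with $\varepsilon \downarrow 0$ if one wants to avoid casework). Neither is a genuine obstacle, and everything else reduces to the algebraic manipulations above.
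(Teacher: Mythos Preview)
Your proof is correct in both directions. The paper does not actually prove this lemma: it simply cites \citet[Section 4.B.2]{marshall1979inequalities} and moves on. Your subgradient-plus-Abel-summation argument for the ``if'' direction and your linear/hinge test-function argument for the ``only if'' direction are precisely the classical proof found in that reference, so there is nothing substantively different to compare.
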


\begin{proof}See \citet[Section 4.B.2]{marshall1979inequalities}.\end{proof}

\subsection{Proof of Theorem \ref{thm:truth-telling}}

Without loss of generality, we assume that the ground-truth ranking $\pi^{\star}(i)=i$. That is,
\[
\mu_{1}^{\star}\geq\mu_{2}^{\star}\geq\cdots\geq\mu_{n}^{\star}\qquad\text{and}\qquad\theta_{1}^\star \geq\theta_{2}^\star \geq\cdots\geq\theta_{n}^\star.
\]
For two different rankings $\pi$ and $\nu$, we say that $\pi$ is
an upward swap of $\nu$ if there exist $1\leq i<j\leq n$ such that
\[
\pi\left(i\right)=\nu\left(j\right)<\pi\left(j\right)=\nu\left(i\right)
\]
and $\pi(k)=\nu(k)$ for all $k\neq i,j$. Roughly speaking, $\pi$
and $\nu$ are identical except for their $i$th and $j$th entries, and
$\pi$ is ``close'' to $\pi^{\star}$. 

Our proof is a stone's throw once we show that
\begin{equation}
\mathsf{Util}\,(\pi)\geq\mathsf{Util}\,(\nu)\label{eq:improve}
\end{equation}
as long as $\pi$ is an upward swap of $\nu$. To see this point, note the following well-known fact concerning permutations:
\begin{lemma}\label{lemma:upward-swap}
For any permutation $\pi$ of $\{1, \ldots, n\}$, there exist an integer $m\geq1$ and a sequence of permutations
$\pi_{1},\ldots,\pi_{m}$ such that $\pi_{1}=\pi^{\star}$, $\pi_{m}=\pi$,
and for each $1\leq i\leq m-1$, $\pi_{i}$ is an upward swap of $\pi_{i+1}$. 

\end{lemma}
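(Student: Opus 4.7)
My plan is to induct on the number of inversions $\mathrm{inv}(\pi) := |\{(i,j) : 1 \le i < j \le n,\ \pi(i) > \pi(j)\}|$. Working backwards, I will show that from any permutation $\pi$ one can reach $\pi^{\star}$ through a sequence of upward swaps; reversing the resulting chain produces the sequence $\pi_1, \ldots, \pi_m$ asserted by the lemma. The base case is $\mathrm{inv}(\pi) = 0$, which forces $\pi(1) < \cdots < \pi(n)$ and hence $\pi = \pi^{\star}$, so we take $m = 1$ and $\pi_1 = \pi$.

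\textbf{Inductive step.} Suppose $\mathrm{inv}(\pi) > 0$ and the result holds for every permutation with strictly fewer inversions. I first claim that $\pi$ admits an \emph{adjacent} inversion, i.e., an index $k$ with $\pi(k) > \pi(k+1)$. To see this, pick any inversion $(i,j)$ of minimal gap $j - i$; if $j - i \ge 2$, then either $\pi(i) > \pi(i+1)$ (already an adjacent inversion, contradicting minimality since $1 < j - i$), or else $\pi(i+1) > \pi(i) > \pi(j)$, which makes $(i+1, j)$ an inversion of strictly smaller gap---also a contradiction. Let $\pi'$ be obtained from $\pi$ by interchanging the values at positions $k$ and $k+1$. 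Setting $i = k$ and $j = k+1$ in the paper's definition of an upward swap verifies at once that $\pi'$ is an upward swap of $\pi$. A routine pair-by-pair check shows that for each index $\ell \notin \{k, k+1\}$ the contributions from the pairs $(\ell, k)$ and $(\ell, k+1)$ cancel (they witness complementary inversion statuses before versus after), so only the pair $(k, k+1)$ itself changes status, giving $\mathrm{inv}(\pi') = \mathrm{inv}(\pi) - 1$. Invoking the inductive hypothesis on $\pi'$ yields a chain $\pi^{\star} = \pi_1, \ldots, \pi_{m-1} = \pi'$ with each $\pi_s$ an upward swap of $\pi_{s+1}$, and appending $\pi_m := \pi$ produces the chain for $\pi$.

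\textbf{Main obstacle.} This is essentially the correctness proof of bubble sort and presents no genuine technical difficulty. The only point warranting care is matching the paper's definition of an upward swap, which permits arbitrary positions $i < j$, to the adjacent transpositions employed above; the adjacent case is a direct instance of the definition, so this identification is routine. A non-adjacent variant is also feasible---swapping any inverted pair $(i, j)$ changes the inversion count by an odd amount---but confining attention to adjacent swaps makes the count change exactly one and keeps the bookkeeping minimal.
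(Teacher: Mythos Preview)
Your proof is correct. You induct on the number of inversions and use only adjacent transpositions (essentially bubble sort), whereas the paper inducts on $n$: it first moves the value $n$ to position $n$ via a single (possibly non-adjacent) upward swap, restricts both $\pi^{\star}$ and the resulting permutation to $[n-1]$, and applies the inductive hypothesis there. Both arguments are elementary and equally valid for the lemma as stated; the paper's route produces a shorter chain (at most $n-1$ swaps rather than up to $\binom{n}{2}$), but since the lemma imposes no bound on $m$ this is immaterial. Your restriction to adjacent swaps makes the inversion bookkeeping clean and the verification that $\pi'$ is an upward swap of $\pi$ immediate, while the paper's approach sidesteps inversion counting entirely at the cost of a slightly more involved restriction-and-extension step.
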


\begin{remark}
For completeness, a proof of this fact is given in the appendix.  
\end{remark}

To prove Theorem \ref{thm:truth-telling}, we can get
\[
\mathsf{Util}\,(\pi^{\star})=\mathsf{Util}\,(\pi_{1})\geq\mathsf{Util}\,(\pi_{2})\geq\cdots\geq\mathsf{Util}\,(\pi_{m})=\mathsf{Util}\,(\pi),
\]
by applying (\ref{eq:improve}) repeatedly, where $\{\pi_{i}\}_{1\leq i\leq m}$ are constructed in Lemma \ref{lemma:upward-swap}
that satisfies: $\pi_{1}=\pi^{\star}$, $\pi_{m}=\pi$, and for each
$1\leq i\leq m-1$, $\pi_{i}$ is an upward swap of $\pi_{i+1}$.

The remainder of the proof aims to establish \eqref{eq:improve}. To begin with, write
\[
\mathsf{Util}\,(\pi)=\mathbb{E}\left[\sum_{k=1}^{n}U\left(\left[\mathcal{P}_{\mathcal{C}}\left(\pi\circ \bX\right)\right]_{k}\right)\right]
\]
and
\[
\mathsf{Util}\,(\nu)=\mathbb{E}\left[\sum_{k=1}^{n}U\left(\left[\mathcal{P}_{\mathcal{C}}\left(\nu\circ \bX\right)\right]_{k}\right)\right],
\]
where $\mathcal{C}$ and $\mathcal{P}_{\mathcal{C}}$ are defined in (\ref{eq:C-defn}) and (\ref{eq:projection-C}), respectively. Next, define
\[
\Delta\left(x,y\right)=\mathbb{E}\left[\sum_{k=1}^{n}U\left(\left[\mathcal{P}_{\mathcal{C}}\left(\pi\circ \bX\right)\right]_{k}\right)-\sum_{k=1}^{n}U\left(\left[\mathcal{P}_{\mathcal{C}}\left(\nu\circ \bX\right)\right]_{k}\right)\mymid X_{\pi(i)}=x,X_{\pi(j)}=y\right].
\]
It is straightforward to check the following properties for this function:
\begin{enumerate}
\item[(a)] Recall that $\pi(i)=\nu(j)<\pi(j)=\nu(i)$ and $\pi(k)=\nu(k)$ for all $k\neq i,j$. Therefore, we have 
\begin{equation}
\Delta\left(x,y\right)=-\Delta\left(y,x\right)\label{eq:Delta-symmetric}
\end{equation}
for any $x,y$. This
immediately implies that
\begin{equation}
\Delta\left(x,x\right)=0.\label{eq:Delta-x-x-0}
\end{equation}
\item[(b)] If $x>y$, conditional on $X_{\pi(i)}=x$ and $X_{\pi(j)}=y$, we have $\pi\circ \bX\succeq_{\mathrm{no}}\nu\circ \bX$. By Lemma~\ref{lemma:no-majorization-projection}, we get
\[
\mathcal{P}_{\mathcal{C}}\left(\pi\circ \bX\right)\succeq\mathcal{P}_{\mathcal{C}}\left(\nu\circ X\right).
\]
Together with Lemma \ref{lemma:hardy-littlewood}, the Hardy--Littlewood--P\'olya inequality, the majorization relationship above yields
\[
\sum_{i=1}^{n}U\left(\left[\mathcal{P}_{\mathcal{C}}\left(\pi\circ X\right)\right]_{i}\right)\geq\sum_{i=1}^{n}U\left(\left[\mathcal{P}_{\mathcal{C}}\left(\nu\circ X\right)\right]_{i}\right)
\]
for any (nondecreasing) convex function $U$. Therefore we know that
\begin{equation}\label{eq:Delta-positive-negative}
\begin{aligned}
  \Delta(x,y) &\geq 0 \quad \text{for } x > y,  \\
  \Delta(x,y) &\leq 0 \quad \text{for } x < y.
\end{aligned}
\end{equation}

\end{enumerate}

Taken together, the elements above show that
\begin{align*}
\mathsf{Util}\,(\pi)-\mathsf{Util}\,(\nu) & \overset{\text{(i)}}{=}\mathbb{E}\left[\mathbb{E}\left[\sum_{k=1}^{n}U\left(\left[\mathcal{P}_{\mathcal{C}}\left(\pi\circ X\right)\right]_{k}\right)-\sum_{k=1}^{n}U\left(\left[\mathcal{P}_{\mathcal{C}}\left(\nu\circ X\right)\right]_{k}\right) \mymid X_{\pi(i)},X_{\pi(j)}\right]\right]\\
 & =\mathbb{E}\left[\Delta\left(X_{\pi(i)},X_{\pi(j)}\right)\right]\\
& = \int\Delta\left(x,y\right)P_{\theta_{\pi(i)}}\left(\mathrm{d}x\right)P_{\theta_{\pi(j)}}\left(\mathrm{d}y\right),
\end{align*}
where (i) indicates the use of the tower property of conditional expectation and $P_{\theta}(\mathrm{d}x)$ denotes the probability measure induced by the exponential family distribution \eqref{eq:exp}. To proceed, we have
\begin{align*}
&\int\Delta\left(x,y\right)P_{\theta_{\pi(i)}}\left(\mathrm{d}x\right)P_{\theta_{\pi(j)}}\left(\mathrm{d}y\right)\\
&\overset{\text{(ii)}}{=}\int_{x>y}\Delta\left(x,y\right)P_{\theta_{\pi(i)}}\left(\mathrm{d}x\right)P_{\theta_{\pi(j)}}\left(\mathrm{d}y\right)+\int_{x<y}\Delta\left(x,y\right)P_{\theta_{\pi(i)}}\left(\mathrm{d}x\right)P_{\theta_{\pi(j)}}\left(\mathrm{d}y\right)\\
 & \overset{\text{(iii)}}{=}\int_{x>y}\Delta\left(x,y\right)P_{\theta_{\pi(i)}}\left(\mathrm{d}x\right)P_{\theta_{\pi(j)}}\left(\mathrm{d}y\right)-\int_{x>y}\Delta\left(x,y\right)P_{\theta_{\pi(j)}}\left(\mathrm{d}x\right)P_{\theta_{\pi(i)}}\left(\mathrm{d}y\right)\\
 & =\int_{x>y}\Delta\left(x,y\right)c\left(x\right)c\left(y\right)\exp\left(\theta_{\pi(i)}x+\theta_{\pi(j)}y-b\left(\theta_{\pi(i)}\right)-b\left(\theta_{\pi(j)}\right)\right)\rho\left(\mathrm{d}x\right)\rho\left(\mathrm{d}y\right)\\
 & \quad-\int_{x>y}\Delta\left(x,y\right)c\left(x\right)c\left(y\right)\exp\left(\theta_{\pi(i)}y+\theta_{\pi(j)}x-b\left(\theta_{\pi(i)}\right)-b\left(\theta_{\pi(j)}\right)\right)\rho\left(\mathrm{d}x\right)\rho\left(\mathrm{d}y\right)\\
 & =\mathrm{e}^{-b\left(\theta_{\pi(i)}\right)-b\left(\theta_{\pi(j)}\right)}\int_{x>y}\Delta\left(x,y\right)c\left(x\right)c\left(y\right)\left[\mathrm{e}^{\theta_{\pi(i)}x+\theta_{\pi(j)}y}-\mathrm{e}^{\theta_{\pi(i)}y+\theta_{\pi(j)}x}\right]\rho\left(\mathrm{d}x\right)\rho\left(\mathrm{d}y\right)\\
 & \overset{\text{(iv)}}{\text{\ensuremath{\geq}}}0,
\end{align*}
where $\rho$ denotes the base measure with respect to the exponential family distribution.

To be complete, (ii) follows from (\ref{eq:Delta-x-x-0}); (iii) utilizes (\ref{eq:Delta-symmetric});
and (iv) holds since when $x>y$, we have $\Delta(x,y)\geq0$, which is ensured by \eqref{eq:Delta-positive-negative}, and that $\theta_{\pi(i)}x+\theta_{\pi(j)}y\geq\theta_{\pi(i)}y+\theta_{\pi(j)}x$ as $\theta_{\pi(i)}\geq\theta_{\pi(j)}$. Therefore,  we have proved (\ref{eq:improve}) when $\pi$ is an upward swap of $\nu$.

This concludes the proof of Theorem \ref{thm:truth-telling}.

\subsection{Proof of Theorem \ref{thm:coarse-optimality}}

Without loss of generality, as earlier, we assume that $\pi^{\star}(i)=i$:
\[
\mu_{1}^{\star}\geq\mu_{2}^{\star}\geq\cdots\geq\mu_{n}^{\star}\qquad\text{and}\qquad\theta_{1}^{\star}\geq\theta_{2}^{\star}\geq\cdots\geq\theta_{n}^{\star}.
\]
For any coarse ranking $\bm{I}\coloneqq(I_{1},\ldots,I_{p})$ of sizes
$n_{1},\ldots,n_{p}$, let $\pi_{\bm{I},\bm{X}}$ be the permutation
such that
\[
I_{1}=\left\{ \pi_{\bm{I},\bm{X}}\left(i\right)\right\} _{1\leq i\leq n_{1}},\quad I_{2}=\left\{ \pi_{\bm{I},\bm{X}}\left(i\right)\right\} _{n_{1}<i\leq n_{1}+n_{2}},\ldots,\quad I_{p}=\left\{ \pi_{\bm{I},\bm{X}}\left(i\right)\right\} _{n_{1}+\cdots+n_{p-1}<i\leq n}
\]
and 
\begin{equation}\label{eq:coarse-proof-sort}
X_{\pi_{\bm{I},\bm{X}}(i)}\geq X_{\pi_{\bm{I},\bm{X}}(j)}
\end{equation}
for all $i < j \in I_{k}, 1\le k \le p$. As is clear, $\pi_{\bm{I},\bm{X}}$ is a random permutation.

Henceforth, we omit the subscript $\bm{X}$ and use $\pi_{\bm{I}}$ to denote this permutation as long as it is clear from the context. It follows from \citet[Lemma 6.14]{su2022truthful} that the solution to the coarse Isotonic Mechanism \eqref{eq:isotonic-regression-coarse} with the coarse ranking $\bm{I}$ is equivalent to the Isotonic Mechanism with ranking $\pi=\pi_{\bm{I}}$. 

We say that a coarse ranking $\bm{I}$ is a coarse upward swap
of another coarse ranking $\bm{J}\coloneqq(J_{1},\ldots,J_{p})$ of
the same sizes if there exist $1\leq k_{1}<k_{2}\leq p$ and $a\in I_{k_{1}}$,
$b\in I_{k_{2}}$ such that 
\[
J_{k_{1}}=\left(I_{k_{1}}\setminus\left\{ a\right\} \right)\cup\left\{ b\right\} ,\quad J_{k_{2}}=\left(I_{k_{2}}\setminus\left\{ b\right\} \right)\cup\left\{ a\right\} ,\quad\text{and}\quad a<b.
\]
 Now, we will show that if $\bm{I}$ is a coarse upward swap of $\bm{J}$,
then 
\begin{equation}
\mathsf{Util}\,(\pi_{\bm{I}})\geq\mathsf{Util}\,(\pi_{\bm{J}}).\label{eq:coarse-proof-0}
\end{equation}

Let $j=\pi_{\bm{J}}^{-1}(a)$ and $i=\pi_{\bm{J}}^{-1}(b)$. By definition, $i<j$. Let $\pi_{\bm{J}\to\bm{I}}$ be derived from $\pi_{\bm{J}}$ by changing only the indices $i$ and $j$. That is,
\[
\pi_{\bm{J}\to\bm{I}}\left(i\right)=a,\quad\pi_{\bm{J}\to\bm{I}}\left(j\right)=b,\quad\text{and}\quad\pi_{\bm{J}\to\bm{I}}\left(l\right)=\pi_{\bm{J}}\left(l\right)
\]
for all $l\neq i,j$. Then, $\pi_{\bm{J}\to\bm{I}}$ is an upward swap of $\pi_{\bm{J}}$, which allows us to use (\ref{eq:improve}) to conclude that
\begin{equation}
\mathsf{Util}\,(\pi_{\bm{J}\to\bm{I}})\geq\mathsf{Util}\,(\pi_{\bm{J}}).\label{eq:coarse-proof-1}
\end{equation}
In addition, it is straightforward to check that 
\begin{align*}
& I_{1}=\left\{ \pi_{\bm{I}}\left(i\right)\right\} _{1\leq i\leq n_{1}}=\left\{ \pi_{\bm{J}\to\bm{I}}\left(i\right)\right\} _{1\leq i\leq n_{1}},\quad \ldots \\
& I_{p}=\left\{ \pi_{\bm{I}}\left(i\right)\right\} _{n_{1}+\cdots+n_{p-1}<i\leq n}=\left\{ \pi_{\bm{J}\to\bm{I}}\left(i\right)\right\} _{n_{1}+\cdots+n_{p-1}<i\leq n},
\end{align*}
which together with (\ref{eq:coarse-proof-sort}) establishes
\[
\pi_{\bm{I}}\circ\bm{X}\succeq_{\mathrm{no}}\pi_{\bm{J}\to\bm{I}}\circ\bm{X}.
\]
By applying Lemma~\ref{lemma:no-majorization-projection}, this immediately gives
\[
\mathcal{P}_{\mathcal{C}}\left(\pi_{\bm{I}}\circ\bm{X}\right)\succeq \mathcal{P}_{\mathcal{C}}\left(\pi_{\bm{J}\to\bm{I}}\circ\bm{X}\right).
\]
In view of the Hardy--Littlewood--P\'olya inequality (Lemma \ref{lemma:hardy-littlewood}), we get
\[
\sum_{i=1}^{n}U\left(\left[\mathcal{P}_{\mathcal{C}}\left(\pi_{\bm{I}}\circ\bm{X}\right)\right]_{i}\right)\geq\sum_{i=1}^{n}U\left(\left[\mathcal{P}_{\mathcal{C}}\left(\pi_{\bm{J}\to\bm{I}}\circ\bm{X}\right)\right]_{i}\right)
\]
for any (nondecreasing) convex function $U$, which immediately gives
\begin{equation}
\mathsf{Util}\,(\pi_{\bm{I}})\geq\mathsf{Util}\,(\pi_{\bm{J}\to\bm{I}}).\label{eq:coarse-proof-2}
\end{equation}
Taking (\ref{eq:coarse-proof-1}) and (\ref{eq:coarse-proof-2}) collectively
proves (\ref{eq:coarse-proof-0}).

As with Lemma \ref{lemma:upward-swap} for rankings, for any coarse ranking $\bm{I}$ that is not truthful, we can show that there exist
a sequence of coarse rankings $\bm{I}_{1},\ldots,\bm{I}_{m}$ of the same sizes such that $\bm{I}_{1}=\bm{I}^{\star}$ (the truthful coarse
ranking), $\bm{I}_{m}=\bm{I}$, and for each $1\leq j\leq m-1$,
$\bm{I}_{j}$ is a coarse upward swap of $\bm{I}_{j+1}$. This fact allows us to apply (\ref{eq:coarse-proof-0}) repeatedly and obtain
\[
\mathsf{Util}\,(\pi_{\bm{I}^{\star}})=\mathsf{Util}\,(\pi_{\bm{I}_{1}})\geq\mathsf{Util}\,(\pi_{\bm{I}_{2}})\geq\cdots\geq\mathsf{Util}\,(\pi_{\bm{I}_{m}})=\mathsf{Util}\,(\pi_{\bm{I}}),
\]
thereby finishing the proof.

 
\section{Ranking is (almost) optimal for truthfulness}
\label{sec:rank-almost-optim}
In this section, we present the proof for Theorem \ref{thm:necessary-condition-slope} and Proposition \ref{thm:necessary-condition-intercept}. We begin by gathering some preliminary facts in Section \ref{sec:necessary-prelim} and then outline the proof in Section \ref{sec:proof-architecture}. The proofs of Theorem \ref{thm:necessary-condition-slope} and Proposition \ref{thm:necessary-condition-intercept} are provided in Sections \ref{sec:proof-slope} and \ref{sec:proof-intercept}, respectively.


\subsection{Preliminaries} \label{sec:necessary-prelim}

\begin{definition}[\citet{marshall1979inequalities}]\label{defn:w-majorization}For any two vectors
$\bm{a},\bm{b}\in\mathbb{R}^{n}$, we say that $\bm{a}$ weakly majorizes $\bm{b}$, denoted $\bm{a}\succeq_{\mathrm{w}} \bm{b}$, if
\[
\sum_{i=1}^{k}a_{[i]}\geq\sum_{i=1}^{k}b_{[i]}
\]
holds for all $1\leq k\leq n$.
\end{definition}

\begin{lemma}[Hardy--Littlewood--P\'olya inequality] \label{lemma:hardy-littlewood2}
For any two vectors
$\bm{a},\bm{b}\in\mathbb{R}^{n}$, the inequality 
\[
\sum_{i=1}^{n}h\left(a_{i}\right)\geq\sum_{i=1}^{n}h\left(b_{i}\right)
\]
holds for all nondecreasing convex function $h:\mathbb{R}\to\mathbb{R}$
if and only if $\bm{a}\succeq_{\mathrm{w}} \bm{b}$. \end{lemma}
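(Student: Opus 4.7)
The aim is to prove the two directions of the equivalence separately, keeping Lemma~\ref{lemma:hardy-littlewood} (ordinary Hardy--Littlewood--P\'olya) in the background as a lever. Throughout the argument, let $a_{[1]}\geq\cdots\geq a_{[n]}$ and $b_{[1]}\geq\cdots\geq b_{[n]}$ denote the sorted entries, and abbreviate $A_{k}=\sum_{i=1}^{k}a_{[i]}$ and $B_{k}=\sum_{i=1}^{k}b_{[i]}$.

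\textbf{Sufficiency ($\Leftarrow$).} Assume $\bm{a}\succeq_{\mathrm{w}}\bm{b}$. The first step is to manufacture an intermediate vector $\bm{c}$ that sits componentwise below $\bm{a}$ (after sorting) and satisfies ordinary majorization $\bm{c}\succeq\bm{b}$. A clean construction dumps the entire deficit onto the smallest coordinate: set $c_{[i]}=a_{[i]}$ for $i<n$ and $c_{[n]}=a_{[n]}-(A_{n}-B_{n})$. Weak majorization gives $A_{n}-B_{n}\geq 0$, so $c_{[n]}\leq a_{[n]}$; the sequence remains nonincreasing because $c_{[n-1]}=a_{[n-1]}\geq a_{[n]}\geq c_{[n]}$; and $\sum_{i=1}^{k}c_{[i]}$ equals $A_{k}\geq B_{k}$ for $k<n$ and equals $B_{n}$ for $k=n$, so $\bm{c}\succeq\bm{b}$. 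The desired inequality then factors as
\[
\sum_{i=1}^{n}h(a_{i})=\sum_{i=1}^{n}h(a_{[i]})\;\geq\;\sum_{i=1}^{n}h(c_{[i]})\;\geq\;\sum_{i=1}^{n}h(b_{[i]})=\sum_{i=1}^{n}h(b_{i}),
\]
where the first inequality uses only that $h$ is nondecreasing (the two sorted sequences agree except in the last slot, where $a_{[n]}\geq c_{[n]}$), and the second is exactly Lemma~\ref{lemma:hardy-littlewood} applied with the convex function $h$ and $\bm{c}\succeq\bm{b}$.

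\textbf{Necessity ($\Rightarrow$).} For the converse, I would substitute particular nondecreasing convex test functions. The linear choice $h(x)=x$ immediately yields $A_{n}\geq B_{n}$. For $1\leq k<n$, consider the hinge $h_{t}(x)=(x-t)_{+}$ and the elementary identity
\[
\sum_{i=1}^{n}(x_{i}-t)_{+}=\max_{0\leq j\leq n}\Bigl(\sum_{i=1}^{j}x_{[i]}-jt\Bigr)
\]
applied to both $\bm{x}=\bm{a}$ and $\bm{x}=\bm{b}$ (with the convention that the empty sum is $0$). Pick any $t$ in the nonempty interval $[a_{[k+1]},a_{[k]}]$, using $a_{[n+1]}=-\infty$ if $k=n-1$ forces that endpoint. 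At such a $t$ the left-hand maximum is attained at $j=k$, while the right-hand maximum is at least $B_{k}-kt$, and the hypothesis collapses to $A_{k}-kt\geq B_{k}-kt$, i.e.~$A_{k}\geq B_{k}$. Letting $k$ run through $1,\ldots,n$ recovers $\bm{a}\succeq_{\mathrm{w}}\bm{b}$.

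\textbf{Main obstacle.} The only real design decision is choosing $\bm{c}$: the ``all deficit to the bottom'' perturbation simultaneously preserves monotonicity, secures componentwise domination by $\bm{a}$, and upgrades $\succeq_{\mathrm{w}}$ to $\succeq$, thereby routing the inequality through the already-known Lemma~\ref{lemma:hardy-littlewood}. Once this step is in hand, the rest is mechanical: the sufficiency reduction is two short inequalities, and necessity is an extremal analysis of the hinge functions. The statement is classical and can alternatively be cited from \citet[Section 4.B.2]{marshall1979inequalities}.
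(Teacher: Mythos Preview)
Your proof is correct. The paper itself does not prove this lemma at all; it simply refers the reader to \citet[Section 4.B.2]{marshall1979inequalities}, which is the same citation you mention at the end. So your contribution here is a genuine self-contained argument where the paper offers only a pointer.

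On the mathematics: the sufficiency direction is clean---dumping the deficit $A_n-B_n$ onto the smallest coordinate is a standard trick, and it correctly upgrades $\succeq_{\mathrm w}$ to $\succeq$ so that Lemma~\ref{lemma:hardy-littlewood} applies. The necessity direction via hinge functions $h_t(x)=(x-t)_+$ and the identity $\sum_i(x_i-t)_+=\max_{0\le j\le n}\bigl(\sum_{i\le j}x_{[i]}-jt\bigr)$ is also correct. One cosmetic slip: the parenthetical ``using $a_{[n+1]}=-\infty$ if $k=n-1$ forces that endpoint'' is misplaced, since for $k=n-1$ the interval is $[a_{[n]},a_{[n-1]}]$ and no $a_{[n+1]}$ is needed; the convention would only matter for $k=n$, which you already handle separately with $h(x)=x$. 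This does not affect the validity of the argument.
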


\begin{proof}See \citet[Section 4.B.2]{marshall1979inequalities}.\end{proof}

Suppose that the linear knowledge partition in $\mathbb{R}^{n}$ is constructed
by a finite set of separating hyperplanes, where a hyperplane is defined
as $\{\bm{x}:\bm{a}^{\top}\bm{x}=b\}$ for some $\bm{a}\in\mathbb{R}^{n}$
and $b\in\mathbb{R}$. We group these hyperplanes into $K$ groups
$\mathcal{H}_{1},\ldots,\mathcal{H}_{K}$, where each group $\mathcal{H}_{k}$
consists of $l_{k}$ parallel hyperplanes. To be precise, we let
\[
\mathcal{H}_{k}=\left\{ H_{k,1},\ldots,H_{k,l_{k}}\right\} ,\quad\text{where}\quad H_{k,l}=\left\{ \bm{x}:\bm{a}_{k}^{\top}\bm{x}=b_{k,l}\right\} ,\quad\forall\,1\leq k\leq K,\,1\leq l\leq l_{k}.
\]
In addition, we assume that $\Vert\bm{a}_{k}\Vert_{2}=1$ and $b_{k,1}>\cdots>b_{k,l_{k}}$
for all $1\leq k\leq K$ for identifiability. We assume that the hyperplanes
in different groups are not parallel, namely for each $1\leq k_{1}\neq k_{2}\leq K$,
$\bm{a}_{k_{1}}\neq\bm{a}_{k_{2}}$. We also define a set of hyperplanes
\[
I_{i,j}\coloneqq\left\{ \bm{x}:x_{i}-x_{j}=0\right\} ,\qquad\forall\,1\leq i\neq j\leq n.
\]
We also collect a few basic facts in geometry, which will be useful
throughout the proof. For any point $\bm{x}_{0}\in\mathbb{R}^{n}$
and a hyperplane $H=\{\bm{x}:\bm{a}^{\top}\bm{x}=b\}$, the Euclidean
projection of $\bm{x}_{0}$ onto $H$ is
\begin{equation}
\mathcal{P}_{H}\left(\bm{x}_{0}\right)=\bm{x}_{0}-\frac{\bm{a}^{\top}\bm{x}_{0}-b}{\left\Vert \bm{a}\right\Vert^{2}}\bm{a}.\label{eq:euclidean-projection}
\end{equation}
The Euclidean distance between $\bm{x}_{0}$ and $H$ is
\begin{equation}
\mathsf{dist}\left(\bm{x}_{0},H\right)=\frac{\left|\bm{a}^{\top}\bm{x}_{0}-b\right|}{\left\Vert \bm{a}\right\Vert}.\label{eq:distance-to-hyperplane}
\end{equation}

\subsection{Proof architecture}
\label{sec:proof-architecture}

In this section we present the outline of the proof. For simplicity
we only focus on the case where each group $\mathcal{H}_{k}$ only
contains one hyperplane (denoted by $H_{k}$), and the review scores $X_i\sim\mathcal{N}(\mu_i^\star,\sigma^2)$. The general setting
will be treated in the formal proof. 

We first show that each hyperplane $H_{k}$ is parallel to some $I_{i,j}$.
In order to better appreciate our proof idea, we first sketch the
proof of \cite[Theorem 1]{su2022truthful}, which can be viewed as
the noiseless case ($\sigma=0$). That paper proves the result by contradiction:
if $H_{k}$ is not parallel to any $I_{i,j}$, then we will be able
to find a point $\bm{\omega}\in H_{k}$ such that each entry of $\bm{\omega}$
is different. For some sufficiently small $\varepsilon>0$, assuming
that $\bm{\omega}+\varepsilon\bm{a}_{k}$ is the true score, then
$\bm{\omega}+\varepsilon\bm{a}_{k}$ should weakly majorize its projection
onto the knowledge element on the other side of $H_{k}$, which is
given by $\bm{\omega}$; similarly one can also deduce that $\bm{\omega}-\varepsilon\bm{a}_{k}$
also weakly majorizes $\bm{\omega}$. In view of \cite[Lemma 6.3]{su2022truthful},
this leads to a contradiction. However in the noisy case ($\sigma>0$),
the analysis becomes challenging since the noisy score can take value
in the entire $\mathbb{R}^{n}$, and there is no simple expression
for the projection of the noisy score onto other knowledge element.
This issue can be resolved by moving $\bm{\omega}$ and $\bm{\omega}\pm\varepsilon\bm{a}_{k}$
far away from the origin and changing the scale, which can effectively be
viewed as reducing the magnitude of noise. More concretely, we can
find a vector $\bm{u}$ that is parallel to $H_{k}$ and has distinct
entries. Then for any $t>0$, we can consider two possible ground
truths $\bm{\omega}+t\bm{u}\pm\varepsilon t\bm{a}_{k}$, and the noisy
scores look like $\bm{\omega}+t\bm{u}\pm\varepsilon t\bm{a}_{k}+\mathcal{N}(\bm{0},\sigma^{2}\bm{I}_{n})$.
By rescaling everything with $1/t$, as $t$ goes to infinity we can
show that both $\bm{u}\pm\varepsilon\bm{a}_{k}$ weakly majorize $\bm{u}$,
which (similar to the noiseless case) leads to a contradiction. 

Then we show that each hyperplane $H_{k}$ passes through the origin. If this
is not true, we will find a point $\bm{y}\in H_{k}$ that is sufficiently
far away from the origin and other hyperplanes $\{H_{k'}:k'\neq k\}$.
Consider the case when the ground truth is $\bm{y}$ and the utility
function takes the form $U(\bm{x})=\Vert\bm{x}\Vert_{2}^{2}$ , then
we can compute the expected utility when one reports the knowledge
element on both sides of $H_{k}$ that contains $\bm{y}$ respectively,
which turns out to be different. This leads to a contradiction, which
in turn shows that each hyperplane must pass through the origin. Taking the
above two results collectively shows that each hyperplane $H_{k}$
must be a pairwise comparison hyperplane.

\subsection{Step 1: determining the normal vectors $\{\bm{a}_{k}\}_{1\protect\leq k\protect\leq K}$ (Proof of Theorem \ref{thm:necessary-condition-slope})} \label{sec:proof-slope}

In Step 1, we demonstrate that for any given $1\leq k\leq K$, there must
exists $1\leq i<j\leq n$ such that $\bm{a}_{k}$ is a constant multiple
of the vector $\bm{e}_{i}-\bm{e}_{j}$. We will establish this through a proof by contradiction.
Suppose this statement is false; then, we can identify a direction $\bm{u}\in\mathbb{R}^{d}$ satisfying: (i) $\bm{u}$ is parallel to the hyperplanes in $\mathcal{H}_{k}$, (ii) $\bm{u}$ is not parallel to the hyperplanes in other $\mathcal{H}_{k'}$ for $k'\neq k$, and (iii) $\bm{u}$ is not parallel to the hyperplane $I_{i,j}$ for any $1\leq i<j\leq n$. To see why this is true, we can express these conditions equivalently using the following systems of equations/inequalities:
\begin{equation}
\begin{cases}
\bm{a}_{k}^{\top}\bm{u}=0,\\
\bm{a}_{k'}^{\top}\bm{u}\neq0, & \text{for all }k'\neq k\\
u_{i}\neq u_{j}, & \text{for all }1\leq i\neq j\leq n.
\end{cases}\label{eq:equation-system-1}
\end{equation}
The existence of such $\bm{u}$ is evident given that $\bm{a}_{k'}\neq\bm{a}_{k}$
for all $k'\neq k$ and that $\bm{a}_{k}$ is not a constant multiple
of $\bm{e}_{i}-\bm{e}_{j}$ for all $1\leq i\neq j\leq n$. We can then select an arbitrary $\bm{u}$ satisfying (\ref{eq:equation-system-1}) and assume $\Vert\bm{u}\Vert=1$ for identifiability.

Next, we fix an arbitrary point $\bm{y}_{1}$ on $H_{k,1}$, and let
$\bm{y}_{2}$ be its Euclidean projection onto $H_{k,l_{k}}$. Define
\[
\bm{y}_{1}\left(t\right)\coloneqq\bm{y}_{1}+t\bm{u}\qquad\text{and}\qquad\bm{y}_{2}\left(t\right)\coloneqq\bm{y}_{2}+t\bm{u}.
\]
We know that 
\begin{equation}
\mathsf{dist}\left(\bm{y}_{1}\left(t\right),\bm{y}_{2}\left(t\right)\right)=\frac{b_{k,1}-b_{k,l_{k}}}{\left\Vert \bm{a}_{k}\right\Vert }=b_{k,1}-b_{k,l_{k}},\qquad\forall\,t\geq0\label{eq:y1t-y2t-dist}
\end{equation}
We can use (\ref{eq:distance-to-hyperplane}) to compute, for $s=1,2$
\begin{align*}
\mathsf{dist}\left(\bm{y}_{s}\left(t\right),H_{k',l}\right) & =\left|\bm{a}_{k'}^{\top}\left(\bm{y}_{s}+t\bm{u}\right)-b_{k',l}\right|=\left|\bm{a}_{k'}^{\top}\bm{y}_{s}-b_{k',l}+t\bm{a}_{k'}^{\top}\bm{u}\right|\\
 & \geq t\left|\bm{a}_{k'}^{\top}\bm{u}\right|-\left|\bm{a}_{k'}^{\top}\bm{y}_{s}-b_{k',l}\right|
\end{align*}
for any $k'\neq k$ and $1\leq l\leq l_{k'}$. In view of (\ref{eq:equation-system-1}),
there exists a constant
\[
c_{1}\coloneqq\frac{1}{4}\min_{k'\neq k}\left|\bm{a}_{k'}^{\top}\bm{u}\right|>0
\]
and a sufficiently large constant $T_{1}$ satifsfying
\[
T_{1}>\frac{1}{2c_{1}}\max_{k'\neq k,s\in\{1,2\},1\leq l\leq l_{k'}}\left|\bm{a}_{k'}^{\top}\bm{y}_{s}-b_{k',l}\right|,
\]
such that for any $t\geq T_{1}$ and $s=1,2$
\begin{equation}
\mathsf{dist}\left(\bm{y}_{s}\left(t\right),H_{k',l}\right)\geq2c_{1}t\label{eq:dist-y-H-step1}
\end{equation}
holds for any $k'\neq k$, $1\leq l\leq l_{k'}$.

Define 
\[
\bm{\omega}_{1}\coloneqq\bm{y}_{1}\left(T_{1}\right)\qquad\text{and}\qquad\bm{\omega}_{2}=\bm{y}_{2}\left(T_{1}\right).
\]
For any $t\geq0$ and $\varepsilon,\delta>0$, we further define
\begin{align*}
\bm{\omega}_{1}\left(t\right) & \coloneqq\bm{y}_{1}\left(T_{1}+t\right)=\bm{\omega}_{1}+t\bm{u},\\
\bm{\omega}_{2}\left(t\right) & \coloneqq\bm{y}_{2}\left(T_{1}+t\right)=\bm{\omega}_{2}+t\bm{u},
\end{align*}
and
\begin{align*}
\bm{z}_{1}\left(t,\varepsilon\right) & \coloneqq\bm{y}_{1}\left(T_{1}+t\right)+\varepsilon t\bm{a}_{k}=\bm{\omega}_{1}\left(t\right)+\varepsilon t\bm{a}_{k}=\bm{\omega}_{1}+t\bm{u}+\varepsilon t\bm{a}_{k},\\
\bm{z}_{2}\left(t,\varepsilon\right) & \coloneqq\bm{y}_{2}\left(T_{1}+t\right)-\varepsilon t\bm{a}_{k}=\bm{\omega}_{2}\left(t\right)-\varepsilon t\bm{a}_{k}=\bm{\omega}_{2}+t\bm{u}-\varepsilon t\bm{a}_{k},
\end{align*}
as well as
\begin{align*}
\widetilde{\bm{z}}_{1}\left(t,\delta\right) & \coloneqq\bm{y}_{1}\left(T_{1}+t\right)-\delta\bm{a}_{k}=\bm{\omega}_{1}\left(t\right)-\delta\bm{a}_{k}=\bm{\omega}_{1}+t\bm{u}-\delta\bm{a}_{k},\\
\widetilde{\bm{z}}_{2}\left(t,\delta\right) & \coloneqq\bm{y}_{2}\left(T_{1}+t\right)+\delta\bm{a}_{k}=\bm{\omega}_{2}\left(t\right)+\delta\bm{a}_{k}=\bm{\omega}_{2}+t\bm{u}+\delta\bm{a}_{k}.
\end{align*}
It is easy to check that $\bm{z}_{1}(t,\varepsilon)$, $\bm{\omega}_{1}(t)$,
$\bm{\omega}_{2}(t)$ and $\bm{z}_{1}(t,\varepsilon)$ lie on a line
sequentially, and the Euclidean projection of $\bm{z}_{1}(t,\varepsilon)$
(resp.~$\bm{z}_{2}(t,\varepsilon)$) onto $H_{k,1}$ (resp.~$H_{k,l_{k}}$)
is $\bm{\omega}_{1}(t)$ (resp.~$\bm{\omega}_{2}(t)$). Fix any constant
$\varepsilon_{0}\leq c_{1}$. For any $t\geq0$ and any $0<\varepsilon\leq\varepsilon_{0}$,
we can show that the line segment between $\bm{\omega}_{1}(t)$ (resp.~$\bm{\omega}_{2}(t)$)
and $\bm{z}_{1}(t,\varepsilon)$ (resp.~$\bm{z}_{2}(t,\varepsilon)$)
does not cross any separating hyperplane $H_{k,l}$ (except that its
endpoint $\bm{\omega}_{1}(t)$ (resp.~$\bm{\omega}_{2}(t)$) is on
$H_{k,1}$ (resp.~$H_{k,l_{k}}$). To see why this is true, for $s=1,2$,
we have $\mathsf{dist}(\bm{z}_{s}(t,\varepsilon),\bm{\omega}_{s}(t))=\varepsilon t$,
and from (\ref{eq:dist-y-H-step1}) we know that
\[
\mathsf{dist}\left(\bm{\omega}_{s}\left(t\right),H_{k',l}\right)=\mathsf{dist}\left(\bm{\omega}_{s}\left(T_{1}+t\right),H_{k',l}\right)\geq2c_{1}\left(T_{1}+t\right)
\]
for any $k'\neq k$ and $1\leq l\leq l_{k'}$. Therefore the two line
segments does not cross any $H_{k',l}$ for $k'\neq k$ and $1\leq l\leq l_{k'}$,
as long as $\varepsilon\leq\varepsilon_{0}\leq c_{1}$. Recall that
we assume $b_{k,1}>\cdots>b_{k,l_{k}}$, and as a result the two line
segments does not cross any $H_{k,l}$ for $1<l<l_{k}$. This gives
two immediate consequences: 
\begin{enumerate}
\item For any $t\geq0$ and any $0<\varepsilon\leq\varepsilon_{0}$ we have
\[
\mathsf{dist}\left(\bm{z}_{s}\left(t,\varepsilon\right),H_{k',l}\right)\geq\mathsf{dist}\left(\bm{\omega}_{s}\left(t\right),H_{k',l}\right)-\mathsf{dist}\left(\bm{\omega}_{s}\left(t\right),\bm{z}_{s}\left(t,\varepsilon\right)\right)\geq2c_{1}T+c_{1}t
\]
for any $k'\neq k$ and $1\leq l\leq l_{k'}$, and
\[
\mathsf{dist}\left(\bm{z}_{s}\left(t,\varepsilon\right),H_{k,l}\right)\geq\varepsilon t
\]
for any $1\leq l\leq l_{k}$. Taking the above two inequalities collectively
yields
\begin{equation}
\min_{1\leq k\leq K,1\leq l\leq l_{k}}\mathsf{dist}\left(\bm{z}_{s}\left(t,\varepsilon\right),H_{k,l}\right)\geq\min\left\{ c_{1},\varepsilon\right\} t=\varepsilon t\label{eq:dist-z-H}
\end{equation}
as long as $\varepsilon\leq\varepsilon_{0}\leq c_{1}$. 
\item As an immediate consequence of (\ref{eq:dist-z-H}), for $s=1,2$,
the set $\{\bm{z}_{1}(t,\varepsilon):t\geq0,0<\varepsilon\leq\varepsilon_{0}\}$
(resp.~$\{\bm{z}_{2}(t,\varepsilon):t\geq0,0<\varepsilon\leq\varepsilon_{0}\}$)
is a subset of an element $S_{1}$ (resp.~$S_{2}$) from the knowledge
partition. This is because the only intersection of this set with
the separating hyperplanes $\{H_{k,l}:1\leq k\leq K,1\leq l\leq l_{k}\}$
is $\bm{z}_{1}(0,0)=\bm{\omega}_{1}$ (resp.~$\bm{z}_{2}(0,0)=\bm{\omega}_{2}$). 
\end{enumerate}
By similar arguments, we can also show that there exists a constant
\[
\delta_{0}\coloneqq\begin{cases}
\frac{1}{2}\min\left\{ \frac{b_{k,1}-b_{k,2}}{\left\Vert \bm{a}_{k}\right\Vert _{2}},\frac{b_{k,l_{k}-1}-b_{k,l_{k}}}{\left\Vert \bm{a}_{k}\right\Vert _{2}},c_{1}T\right\} , & \text{if }l_{k}\geq2,\\
\frac{1}{2}cT, & \text{if }l_{k}=1,
\end{cases}
\]
such that for $s=1,2$, the set $\{\widetilde{\bm{z}}_{1}(t,\delta):t\geq0,0<\delta\leq\delta_{0}\}$
(resp.~$\{\widetilde{\bm{z}}_{2}(t,\delta):t\geq0,0<\varepsilon\leq\delta_{0}\}$)
is a subset of an element $S_{1}'$ (resp.~$S_{2}'$) from the linear knowledge
partition. 

Fix any $0<\varepsilon\leq\varepsilon_{0}$. For any $t\geq0$, when
the ground truth is $\bm{z}_{1}(t,\varepsilon)$, let the noisy score be $\bX_t$. By the truth telling
assumption we know that for any nondecreasing convex function $U:\mathbb{R}\to\mathbb{R}$
we have
\begin{equation}
\mathbb{E}\left[U\left(\mathcal{P}_{S_{1}}\left(\bm{z}_{1}\left(t,\varepsilon\right)+\bm{e}\right)\right)\right]\geq\mathbb{E}\left[U\left(\bm{\omega}_{1}\left(t\right)+\mathcal{P}_{H_{k,1}}\left(\bm{e}\right)\right)\right],\label{eq:U-expectation}
\end{equation}
where $\bm{e}=\bX_t-\bz_1(t,\varepsilon)$ is the
noise, and $\mathcal{P}_{S_{1}}(\bm{z}_{1}(t,\varepsilon)+\bm{e})$
(resp.~$\bm{\omega}_{1}(t)+\mathcal{P}_{H_{k,1}}(\bm{e})$) is the
Euclidean projection of the noisy score $\bX_t$
onto $S_{1}$ (resp.~$S_{1}'$). Then we claim that this leads to
$\bm{u}+\varepsilon\bm{a}_{k}\succeq_{\mathrm{w}}\bm{u}$. Actually
if this does not hold, then Lemma \ref{lemma:hardy-littlewood} asserts
the existence of a nondecreasing continuous convex function $h:\mathbb{R}\to\mathbb{R}$
such that
\begin{equation}
h\left(\bm{u}+\varepsilon\bm{a}_{k}\right)<h\left(\bm{u}\right).\label{eq:h-contradiction}
\end{equation}
Let $U(x)=h(tx)$, which is also a nondecreasing convex function.
Then (\ref{eq:U-expectation}) gives
\begin{equation}
\mathbb{E}\left[h\left(\frac{\mathcal{P}_{S_{1}}\left(\bm{z}_{1}\left(t,\varepsilon\right)+\bm{e}\right)}{t}\right)\right]\geq\mathbb{E}\left[h\left(\frac{\bm{\omega}_{1}\left(t\right)+\mathcal{P}_{H_{k,1}}\left(\bm{e}\right)}{t}\right)\right].\label{eq:h-expectation}
\end{equation}
In view of the assumption \eqref{eq:growth-condition}, we know that 
\[
\frac{\bm{e}}{t} = \frac{\bX_t-\bz_1(t,\varepsilon)}{t}\overset{\mathrm{a.s.}}{\longrightarrow}\bm{0}
\]
We also learn from (\ref{eq:dist-z-H}) that the distance between $\bm{z}_{1}(t,\varepsilon)$
and the boundry of $S_{1}$ (which must be one of the separating hyperplanes)
is at least $\varepsilon t$, therefore as $t\to\infty$ the sequence
of random vector
\[
\frac{\left(\mathcal{I}-\mathcal{P}_{S_{1}}\right)\left(\bm{z}_{1}\left(t,\varepsilon\right)+\bm{e}\right)}{t}\overset{\mathrm{a.s.}}{\longrightarrow}\bm{0},
\]
where $\mathcal{I}$ is the identity operator. This further gives
\begin{align*}
\frac{\mathcal{P}_{S_{1}}\left(\bm{z}_{1}\left(t,\varepsilon\right)+\bm{e}\right)}{t} & =\frac{\bm{z}_{1}\left(t,\varepsilon\right)+\bm{e}}{t}-\frac{\left(\mathcal{I}-\mathcal{P}_{S_{1}}\right)\left(\bm{z}_{1}\left(t,\varepsilon\right)+\bm{e}\right)}{t}\\
 & =\frac{\bm{\omega}_{1}+t\bm{u}+\varepsilon t\bm{a}_{k}+\bm{e}}{t}-\frac{\left(\mathcal{I}-\mathcal{P}_{S_{1}}\right)\left(\bm{z}_{1}\left(t,\varepsilon\right)+\bm{e}\right)}{t}\\
 & \overset{\mathrm{a.s.}}{\longrightarrow}\bm{u}+\varepsilon\bm{a}_{k}
\end{align*}
as $t\to\infty$. It is also straightforward to check that
\[
\frac{\bm{\omega}_{1}\left(t\right)+\mathcal{P}_{H_{k,1}}\left(\bm{e}\right)}{t}=\frac{\bm{\omega}_{1}+t\bm{u}+\mathcal{P}_{H_{k,1}}\left(\bm{e}\right)}{t}\overset{\mathrm{a.s.}}{\longrightarrow}\bm{u}.
\]
as $t\to\infty$. Since $h(\cdot)$ is continuous, by the continuous
mapping theorem, we know that 
\[
\mathbb{E}\left[h\left(\frac{\bm{z}_{1}\left(t,\varepsilon\right)+\bm{e}}{t}\right)\right]\to h\left(\bm{u}+\varepsilon\bm{a}_{k}\right)\qquad\text{and}\qquad\mathbb{E}\left[h\left(\frac{\bm{\omega}_{1}\left(t\right)+\mathcal{P}_{H_{k,1}}\left(\bm{e}\right)}{t}\right)\right]\to h\left(\bm{u}\right),
\]
as $t\to\infty$. Since (\ref{eq:h-expectation}) holds for any $t$,
we can take the limit $t\to\infty$ to achieve $h(\bm{u}+\varepsilon\bm{a}_{k})\geq h(\bm{u})$,
which contradicts with (\ref{eq:h-contradiction}). This shows that
$\bm{u}+\varepsilon\bm{a}_{k}\succeq_{\mathrm{w}}\bm{u}$. Similarly
we can also show that $\bm{u}-\varepsilon\bm{a}_{k}\succeq_{\mathrm{w}}\bm{u}$.
Since each element of $\bm{u}$ is different, and $\bm{u}\pm\varepsilon\bm{a}_{k}\succeq_{\mathrm{w}}\bm{u}$
both hold for any $\varepsilon\in(0,\varepsilon_{0}]$, we can apply
\cite[Lemma 6.3]{su2022truthful} to show that $\varepsilon=0$, which
is a contradiction. This means that for any given $1\leq k\leq K$,
there must exist $1\leq i<j\leq n$ such that $\bm{a}_{k}$ is a
constant multiple of the vector $\bm{e}_{i}-\bm{e}_{j}$, which finishes
Step 1 as well as the proof of Theorem \ref{thm:necessary-condition-slope}.

\subsection{Step 2: determining the intercepts $\{b_{k,l}\}_{1\protect\leq k\protect\leq K,1\protect\leq l\protect\leq l_{k}}$ (Proof of Proposition \ref{thm:necessary-condition-intercept})} \label{sec:proof-intercept}

Based on what we have shown in Step 1, we can further assume, without
loss of generality, that for any $1\leq k\leq K$
\[
\bm{a}_{k}=\frac{1}{\sqrt{2}}\left(\bm{e}_{i_{k}}-\bm{e}_{j_{k}}\right)
\]
for some $1\leq i_{k}<j_{k}\leq n$. We also require that $(i_{k},j_{k})\neq(i_{k'},j_{k'})$
for $k\neq k'$. In Step 2 we aim to show that for any $1\leq k\leq K$,
we must have $l_{k}=1$. We prove this by contradiction. Suppose there
exists $1\leq k\leq K$ such that $l_{k}\geq2$, and assume without
loss of generality that $b_{k,1}>0$ (the case when $l_{k}=1$ is
much easier, and the proof can be obtained by taking $b_{k,2}=-\infty$
in the following proof).

For any $t\geq0$, let $\bm{v}$ be a unit vector satisfying
\begin{equation}
\begin{cases}
\bm{a}_{k}^{\top}\bm{v}=0,\\
\bm{a}_{k'}^{\top}\bm{v}\neq0, & \text{for all }k'\neq k,\\
v_{i}>0, & \text{for all }1\leq i\leq n.
\end{cases}\label{eq:equation-system-2}
\end{equation}
Fix any $\bm{y}(0)\in H_{k,1}$, and define
\[
\bm{y}\left(t\right)=\bm{y}\left(0\right)+t\bm{v},\qquad\forall\,t\geq0.
\]
Note that for any $1\leq l\leq l_{k}$ and any $t\geq0$, we have
\[
\mathsf{dist}\left(\bm{y}\left(t\right),H_{k,l}\right)=\frac{b_{k,1}-b_{k,l}}{\left\Vert \bm{a}_{k}\right\Vert _{2}}=b_{k,1}-b_{k,l},
\]
Akin to Step 1, we can check that $\bm{y}_{t}\in H_{k,1}$ and there
exists some sufficiently large constant $T>0$ and some sufficiently
small constant $c_{2}>0$, such that for any $t\geq T$, we have
\begin{equation}
\min_{k'\neq k,1\leq l\leq l_{k'}}\mathsf{dist}\left(\bm{y}\left(t\right),H_{k',l}\right)\geq4c_{2}t,\label{eq:dist-y-H-step2}
\end{equation}
and
\begin{equation}
\max_{1\leq l\leq l_{k}}\mathsf{dist}\left(\bm{y}\left(t\right),H_{k,l}\right)\leq c_{2}t,\label{eq:dist-y-H-k-2}
\end{equation}
and
\begin{equation}
\min_{1\leq i\leq n}y_{i}\left(t\right)\geq4c_{2}t.\label{eq:y-entrywise}
\end{equation}
Define a Euclidean ball 
\[
\mathsf{B}\left(t\right)\coloneqq\left\{ \bm{x}\in\mathbb{R}^{n}:\left\Vert \bm{x}-\bm{y}\left(t\right)\right\Vert _{2}\leq2c_{2}t\right\} ,
\]
and its three subsets separated by $H_{k,1}$ and $H_{k,2}$: 
\begin{align*}
\mathsf{B}_{1}\left(t\right) & \coloneqq\mathsf{B}\left(t\right)\cap\left\{ \bm{x}:\bm{a}_{k}^{\top}\bm{x}\geq b_{k,1}\right\} ,\\
\mathsf{B}_{2}\left(t\right) & \coloneqq\mathsf{B}\left(t\right)\cap\left\{ \bm{x}:b_{k,1}\geq\bm{a}_{k}^{\top}\bm{x}\geq b_{k,2}\right\} ,\\
\mathsf{B}_{3}\left(t\right) & \coloneqq\mathsf{B}\left(t\right)\cap\left\{ \bm{x}:\bm{a}_{k}^{\top}\bm{x}\leq b_{k,2}\right\} .
\end{align*}
It is straightforward to check that for all $t\geq T$, $\mathsf{B}_{1}(t)$
(resp.~$\mathsf{B}_{2}(t)$) lives in the same element, denoted by
$S_{1}$ (resp.~$S_{2}$), of the linear knowledge partition. For any $t\geq T$,
when the ground truth is $\bm{y}(t)$, by the truth-telling assumption
we know that for any nondecreasing convex function $U:\mathbb{R}\to\mathbb{R}$
we have
\begin{equation}
\mathbb{E}\left[U\left(\mathcal{P}_{S_{1}}\left(\bm{y}\left(t\right)+\bm{e}\right)\right)\right]=\mathbb{E}\left[U\left(\mathcal{P}_{S_{2}}\left(\bm{y}\left(t\right)+\bm{e}\right)\right)\right]\label{eq:U-expectation-1}
\end{equation}
with $\bm{e}\sim\mathcal{N}(\bm{0},\sigma^{2}\bm{I}_{n})$, because $\bm{y}(t)\in S_{1}$ and $\bm{y}(t)\in S_{2}$ hold
simultaneously. We take a test function of the form $U(x)=\max\{x^{2},0\}$,
which is nondecreasing and convex. We first decompose both sides
of (\ref{eq:U-expectation-1}) into
\begin{align*}
\mathbb{E}\left[U\left(\mathcal{P}_{S_{1}}\left(\bm{y}\left(t\right)+\bm{e}\right)\right)\right] & =\underbrace{\mathbb{E}\left[U\left(\mathcal{P}_{S_{1}}\left(\bm{y}\left(t\right)+\bm{e}\right)\right)\ind\left\{ \bm{y}\left(t\right)+\bm{e}\in\mathsf{B}_{1}\left(t\right)\right\} \right]}_{\eqqcolon\alpha_{1}\left(t\right)}\\
 & \quad+\underbrace{\mathbb{E}\left[U\left(\mathcal{P}_{S_{1}}\left(\bm{y}\left(t\right)+\bm{e}\right)\right)\ind\left\{ \bm{y}\left(t\right)+\bm{e}\in\mathsf{B}_{2}\left(t\right)\cup\mathsf{B}_{3}\left(t\right)\right\} \right]}_{\eqqcolon\alpha_{2}\left(t\right)}\\
 & \quad+\underbrace{\mathbb{E}\left[U\left(\mathcal{P}_{S_{1}}\left(\bm{y}\left(t\right)+\bm{e}\right)\right)\ind\left\{ \bm{y}\left(t\right)+\bm{e}\notin\mathsf{B}\left(t\right)\right\} \right]}_{\eqqcolon\alpha_{3}\left(t\right)}
\end{align*}
and
\begin{align*}
\mathbb{E}\left[U\left(\mathcal{P}_{S_{2}}\left(\bm{y}\left(t\right)+\bm{e}\right)\right)\right] & =\underbrace{\mathbb{E}\left[U\left(\mathcal{P}_{S_{2}}\left(\bm{y}\left(t\right)+\bm{e}\right)\right)\ind\left\{ \bm{y}\left(t\right)+\bm{e}\in\mathsf{B}_{1}\left(t\right)\right\} \right]}_{\eqqcolon\beta_{1}\left(t\right)}\\
 & \quad+\underbrace{\mathbb{E}\left[U\left(\mathcal{P}_{S_{2}}\left(\bm{y}\left(t\right)+\bm{e}\right)\right)\ind\left\{ \bm{y}\left(t\right)+\bm{e}\in\mathsf{B}_{2}\left(t\right)\right\} \right]}_{\eqqcolon\beta_{2}\left(t\right)}\\
 & \quad+\underbrace{\mathbb{E}\left[U\left(\mathcal{P}_{S_{2}}\left(\bm{y}\left(t\right)+\bm{e}\right)\right)\ind\left\{ \bm{y}\left(t\right)+\bm{e}\in\mathsf{B}_{3}\left(t\right)\right\} \right]}_{\eqqcolon\beta_{3}\left(t\right)}\\
 & \quad+\underbrace{\mathbb{E}\left[U\left(\mathcal{P}_{S_{2}}\left(\bm{y}\left(t\right)+\bm{e}\right)\right)\ind\left\{ \bm{y}\left(t\right)+\bm{e}\notin\mathsf{B}\left(t\right)\right\} \right]}_{\eqqcolon\beta_{4}\left(t\right)}.
\end{align*}
Then we analyze $\alpha_{1}(t)$, $\alpha_{2}(t)$, $\alpha_{3}(t)$,
$\beta_{1}(t)$, $\beta_{2}(t)$, $\beta_{3}(t)$, and $\beta_{4}(t)$,
respectively.
\begin{itemize}
\item In view of (\ref{eq:dist-y-H-step2}) and (\ref{eq:dist-y-H-k-2}),
we know that when $\bm{y}(t)+\bm{e}\in\mathsf{B}_{1}(t)$,
\begin{equation}
\mathcal{P}_{S_{2}}\left(\bm{y}\left(t\right)+\bm{e}\right)=\mathcal{P}_{H_{k,1}}\left(\bm{y}\left(t\right)+\bm{e}\right)=\bm{y}\left(t\right)+\mathcal{P}_{H_{k,1}}\left(\bm{e}\right),\label{eq:step2-proj-1}
\end{equation}
when $\bm{y}(t)+\bm{e}\in\mathsf{B}_{2}(t)\cup\mathsf{B}_{3}(t)$,
\begin{equation}
\mathcal{P}_{S_{1}}\left(\bm{y}\left(t\right)+\bm{e}\right)=\mathcal{P}_{H_{k,1}}\left(\bm{y}\left(t\right)+\bm{e}\right)=\bm{y}\left(t\right)+\mathcal{P}_{H_{k,1}}\left(\bm{e}\right),\label{eq:step2-proj-2}
\end{equation}
By symmetry of $\bm{e}\sim\mathcal{N}(\bm{0},\sigma^{2}\bm{I}_{n})$,
we know that 
\begin{equation}
\alpha_{2}\left(t\right)=\beta_{1}\left(t\right).\label{eq:alpha-2-beta-1}
\end{equation}
\item In view of (\ref{eq:y-entrywise}), we know that for any $\bm{x}\in\mathsf{B}(t)$,
$\bm{x}$ has positive entries and therefore $U(\bm{x})=\Vert\bm{x}\Vert_{2}^{2}$.
Let $\phi(\bm{x})=(2\pi)^{-n/2}\exp(-\Vert\bm{x}\Vert_{2}^{2}/2)$
be the probability density function of $\bm{e}\sim\mathcal{N}(\bm{0},\sigma^{2}\bm{I}_{n})$.
Recall that $\mathsf{B}_{1}(t)\subset S_{1}$, $\mathsf{B}_{2}(t)\subset S_{2}$,
we know that 
\begin{align*}
\alpha_{1}\left(t\right) & =\mathbb{E}\left[U\left(\bm{y}\left(t\right)+\bm{e}\right)\ind\left\{ \bm{y}\left(t\right)+\bm{e}\in\mathsf{B}_{1}\left(t\right)\right\} \right]=\mathbb{E}\left[\left\Vert \bm{y}\left(t\right)+\bm{e}\right\Vert _{2}^{2}\ind\left\{ \bm{y}\left(t\right)+\bm{e}\in\mathsf{B}_{1}\left(t\right)\right\} \right]\\
 & =\int_{\bm{a}_{k}^{\top}\bm{x}\geq0,\left\Vert \bm{x}\right\Vert _{2}\leq2c_{2}t}\left\Vert \bm{y}\left(t\right)+\bm{x}\right\Vert _{2}^{2}\phi\left(\bm{x}\right)\mathrm{d}\bm{x}=\int_{\bm{a}_{k}^{\top}\bm{x}\leq0,\left\Vert \bm{x}\right\Vert _{2}\leq2c_{2}t}\left\Vert \bm{y}\left(t\right)-\bm{x}\right\Vert _{2}^{2}\phi\left(\bm{x}\right)\mathrm{d}\bm{x},
\end{align*}
where the last line holds since $\phi(\cdot)$ is symmetric around
$\bm{0}$, and
\begin{align*}
\beta_{2}\left(t\right) & =\mathbb{E}\left[U\left(\bm{y}\left(t\right)+\bm{e}\right)\ind\left\{ \bm{y}\left(t\right)+\bm{e}\in\mathsf{B}_{2}\left(t\right)\right\} \right]=\mathbb{E}\left[\left\Vert \bm{y}\left(t\right)+\bm{e}\right\Vert _{2}^{2}\ind\left\{ \bm{y}\left(t\right)+\bm{e}\in\mathsf{B}_{2}\left(t\right)\right\} \right]\\
 & =\int_{0\geq\bm{a}_{k}^{\top}\bm{x}\geq b_{k,2}-b_{k,1},\left\Vert \bm{x}\right\Vert _{2}\leq2c_{2}t}\left\Vert \bm{y}\left(t\right)+\bm{x}\right\Vert _{2}^{2}\phi\left(\bm{x}\right)\mathrm{d}\bm{x}
\end{align*}
In view of (\ref{eq:dist-y-H-step2}) and (\ref{eq:dist-y-H-k-2}),
we know that when $\bm{y}(t)+\bm{e}\in\mathsf{B}_{3}(t)$,
\begin{align*}
\mathcal{P}_{S_{2}}\left(\bm{y}\left(t\right)+\bm{e}\right) & =\mathcal{P}_{H_{k,2}}\left(\bm{y}\left(t\right)+\bm{e}\right)=\bm{y}\left(t\right)+\bm{e}-\left\{ \bm{a}_{k}^{\top}\left[\bm{y}\left(t\right)+\bm{e}\right]-b_{k,2}\right\} \bm{a}_{k}=\bm{y}\left(t\right)+\bm{e}-c_{t}\left(\bm{e}\right)\bm{a}_{k},
\end{align*}
where we define
\[
c_{t}\left(\bm{x}\right)\coloneqq\bm{a}_{k}^{\top}\left[\bm{y}\left(t\right)+\bm{x}\right]-b_{k,2}=\bm{a}_{k}^{\top}\bm{x}+b_{k,1}-b_{k,2}.
\]
Therefore we know that
\begin{align*}
\beta_{3}\left(t\right) & =\mathbb{E}\left[\left\Vert \bm{y}\left(t\right)+\bm{e}-c_{t}\left(\bm{e}\right)\bm{a}_{k}\right\Vert _{2}^{2}\ind\left\{ \bm{y}\left(t\right)+\bm{e}\in\mathsf{B}_{3}\left(t\right)\right\} \right]\\
 & =\int_{\bm{a}_{k}^{\top}\bm{x}\leq b_{k,2}-b_{k,1},\left\Vert \bm{x}\right\Vert _{2}\leq2c_{2}t}\left\Vert \bm{y}\left(t\right)+\bm{x}-c_{t}\left(\bm{x}\right)\bm{a}_{k}\right\Vert _{2}^{2}\phi\left(\bm{x}\right)\mathrm{d}\bm{x}.
\end{align*}
\item Then we have
\begin{align*}
\alpha_{1}\left(t\right)-\beta_{2}\left(t\right)-\beta_{3}\left(t\right) & =\underbrace{\int_{0\geq\bm{a}_{k}^{\top}\bm{x}\geq b_{k,2}-b_{k,1},\left\Vert \bm{x}\right\Vert _{2}\leq2c_{2}t}\left[\left\Vert \bm{y}\left(t\right)-\bm{x}\right\Vert _{2}^{2}-\left\Vert \bm{y}\left(t\right)+\bm{x}\right\Vert _{2}^{2}\right]\phi\left(\bm{x}\right)\mathrm{d}\bm{x}}_{\eqqcolon\gamma_{1}\left(t\right)}\\
 & +\underbrace{\int_{\bm{a}_{k}^{\top}\bm{x}\leq b_{k,2}-b_{k,1},\left\Vert \bm{x}\right\Vert _{2}\leq2c_{2}t}\left[\left\Vert \bm{y}\left(t\right)-\bm{x}\right\Vert _{2}^{2}-\left\Vert \bm{y}\left(t\right)+\bm{x}-c_{t}\left(\bm{x}\right)\bm{a}_{k}\right\Vert _{2}^{2}\right]\phi\left(\bm{x}\right)\mathrm{d}\bm{x}}_{\eqqcolon\gamma_{2}\left(t\right)}.
\end{align*}
Regarding $\gamma_{1}(t)$, we have
\begin{align*}
\gamma_{1}\left(t\right) & =\int_{0\geq\bm{a}_{k}^{\top}\bm{x}\geq b_{k,2}-b_{k,1},\left\Vert \bm{x}\right\Vert _{2}\leq2c_{2}t}\left[-4\bm{y}\left(t\right)^{\top}\bm{x}\right]\phi\left(\bm{x}\right)\mathrm{d}\bm{x}\\
 & =\int_{0\geq\bm{a}_{k}^{\top}\bm{x}\geq b_{k,2}-b_{k,1},\left\Vert \bm{x}\right\Vert _{2}\leq2c_{2}t}\left[-4\bm{y}\left(0\right)^{\top}\bm{x}+t\bm{v}^{\top}\bm{x}\right]\phi\left(\bm{x}\right)\mathrm{d}\bm{x}\\
 & =\int_{0\geq\bm{a}_{k}^{\top}\bm{x}\geq b_{k,2}-b_{k,1},\left\Vert \bm{x}\right\Vert _{2}\leq2c_{2}t}\left[-4\bm{y}\left(0\right)^{\top}\bm{x}\right]\phi\left(\bm{x}\right)\mathrm{d}\bm{x}
\end{align*}
Here the penultimate step follows from $\bm{a}_{k}^{\top}\bm{v}=0$,
which is guaranteed by (\ref{eq:equation-system-2}). Similarly, we
also have
\begin{align*}
\gamma_{2}\left(t\right) & =\int_{\bm{a}_{k}^{\top}\bm{x}\leq b_{k,2}-b_{k,1},\left\Vert \bm{x}\right\Vert _{2}\leq2c_{2}t}\left\{ -4\bm{y}\left(t\right)^{\top}\bm{x}+2c_{t}\left(\bm{x}\right)\bm{a}_{k}^{\top}\left[\bm{y}\left(t\right)+\bm{x}\right]-c_{t}\left(\bm{x}\right)^{2}\right\} \phi\left(\bm{x}\right)\mathrm{d}\bm{x}\\
 & =\int_{\bm{a}_{k}^{\top}\bm{x}\leq b_{k,2}-b_{k,1},\left\Vert \bm{x}\right\Vert _{2}\leq2c_{2}t}\left\{ -4\bm{y}\left(0\right)^{\top}\bm{x}+2c_{t}\left(\bm{x}\right)\left(\bm{a}_{k}^{\top}\bm{x}+b_{k,1}\right)-c_{t}\left(\bm{x}\right)^{2}\right\} \phi\left(\bm{x}\right)\mathrm{d}\bm{x}\\
 & =\int_{\bm{a}_{k}^{\top}\bm{x}\leq b_{k,2}-b_{k,1},\left\Vert \bm{x}\right\Vert _{2}\leq2c_{2}t}\left[-4\bm{y}\left(0\right)^{\top}\bm{x}+2\left(\bm{a}_{k}^{\top}\bm{x}+b_{k,1}-b_{k,2}\right)b_{k,2}+\left(\bm{a}_{k}^{\top}\bm{x}+b_{k,1}-b_{k,2}\right)^{2}\right]\phi\left(\bm{x}\right)\mathrm{d}\bm{x}\\
 & =\int_{\bm{a}_{k}^{\top}\bm{x}\leq b_{k,2}-b_{k,1},\left\Vert \bm{x}\right\Vert _{2}\leq2c_{2}t}\left[-4\bm{y}\left(0\right)^{\top}\bm{x}+\left(\bm{a}_{k}^{\top}\bm{x}+b_{k,1}\right)^{2}-b_{k,2}^{2}\right]\phi\left(\bm{x}\right)\mathrm{d}\bm{x}.
\end{align*}
It is straightforward to check that
\begin{align*}
\lim_{t\to\infty}\gamma_{1}\left(t\right) & =\int_{0\geq\bm{a}_{k}^{\top}\bm{x}\geq b_{k,2}-b_{k,1}}\left[-4\bm{y}\left(0\right)^{\top}\bm{x}\right]\phi\left(\bm{x}\right)\mathrm{d}\bm{x}\eqqcolon\gamma_{1},\\
\lim_{t\to\infty}\gamma_{2}\left(t\right) & =\int_{\bm{a}_{k}^{\top}\bm{x}\leq b_{k,2}-b_{k,1}}\left[-4\bm{y}\left(0\right)^{\top}\bm{x}+\left(\bm{a}_{k}^{\top}\bm{x}+b_{k,1}\right)^{2}-b_{k,2}^{2}\right]\phi\left(\bm{x}\right)\mathrm{d}\bm{x}\eqqcolon\gamma_{2}.
\end{align*}
Let $Z_{1}=\bm{a}_{k}^{\top}\bm{e}$, $Z_{2}=\bm{v}^{\top}\bm{e}$,
and $Z_{3}=[(\bm{I}_{n}-\bm{a}_{k}\bm{a}_{k}^{\top})\bm{y}(0)]^{\top}\bm{e}$.
It is clear that $Z_{1}$ and $Z_{3}$ are independent. Recall that
$\bm{a}_{k}^{\top}\bm{v}=0$, we know that $Z_{1},Z_{2}\overset{\text{i.i.d.}}{\sim}\mathcal{N}(0,1)$.
We also know that
\[
\bm{y}\left(0\right)^{\top}\bm{e}=\left[\bm{a}_{k}\bm{a}_{k}^{\top}\bm{y}\left(0\right)+\left(\bm{I}_{n}-\bm{a}_{k}\bm{a}_{k}^{\top}\right)\bm{y}\left(0\right)\right]^{\top}\bm{e}=b_{k,1}Z_{1}+Z_{3}.
\]
Then we know that
\begin{align*}
\gamma_{1} & =\mathbb{E}\left[-4\left(b_{k,1}Z_{1}+Z_{3}\right)\ind\left\{ b_{k,2}-b_{k,1}\leq Z_{1}\leq0\right\} \right]\\
 & =-4b_{k,1}\mathbb{E}\left[Z_{1}\ind\left\{ b_{k,2}-b_{k,1}\leq Z_{1}\leq0\right\} \right],
\end{align*}
where the last step uses the fact that $Z_{1}$ and $Z_{3}$ are independent.
We also have
\begin{align*}
\gamma_{2} & =\mathbb{E}\left[\left[-4\left(b_{k,1}Z_{1}+Z_{3}\right)+\left(Z_{1}+b_{k,1}\right)^{2}-b_{k,2}^{2}\right]\ind\left\{ Z_{1}\leq b_{k,2}-b_{k,1}\right\} \right]\\
 & =\mathbb{E}\left[\left[-4b_{k,1}Z_{1}+\left(Z_{1}+b_{k,1}\right)^{2}-b_{k,2}^{2}\right]\ind\left\{ Z_{1}\leq b_{k,2}-b_{k,1}\right\} \right].
\end{align*}
Therefore 
\begin{align*}
\gamma_{1}+\gamma_{2} & =-4b_{k,1}\mathbb{E}\left[Z_{1}\ind\left\{ Z_{1}\leq0\right\} \right]+\mathbb{E}\left[\left[\left(Z_{1}+b_{k,1}\right)^{2}-b_{k,2}^{2}\right]\ind\left\{ Z_{1}\leq b_{k,2}-b_{k,1}\right\} \right]\\
 & =\frac{4}{\sqrt{2\pi}}b_{k,1}+\mathbb{E}\left[\left[\left(Z_{1}+b_{k,1}\right)^{2}-b_{k,2}^{2}\right]\ind\left\{ Z_{1}\leq b_{k,2}-b_{k,1}\right\} \right].
\end{align*}
Define a function 
\begin{align*}
f\left(x\right) & \coloneqq\mathbb{E}\left[\left[\left(Z_{1}+b_{k,1}\right)^{2}-x^{2}\right]\ind\left\{ Z_{1}\leq x-b_{k,1}\right\} \right]\\
 & =\int_{-\infty}^{x-b_{k,1}}\left[\left(z+b_{k,1}\right)^{2}-x^{2}\right]\frac{1}{\sqrt{2\pi}}\exp\left(-\frac{z^{2}}{2}\right)\mathrm{d}z,
\end{align*}
and compute its derivative
\begin{align*}
f'\left(x\right) & =\left[\left(x-b_{k,1}+b_{k,1}\right)^{2}-x^{2}\right]\frac{1}{\sqrt{2\pi}}\exp\left[-\frac{\left(x-b_{k,1}\right)^{2}}{2}\right]-\int_{-\infty}^{x-b_{k,1}}\frac{2x}{\sqrt{2\pi}}\exp\left(-\frac{z^{2}}{2}\right)\mathrm{d}z\\
 & =-x\int_{-\infty}^{x-b_{k,1}}\frac{2}{\sqrt{2\pi}}\exp\left(-\frac{z^{2}}{2}\right)\mathrm{d}z.
\end{align*}
We can see that $f(x)$ is increasing when $x\leq0$, and is decreasing
when $x>0$. As a result,
\begin{align*}
\inf_{x\leq b_{k,1}}f\left(x\right) & =\min\left\{ \inf_{x\to-\infty}f\left(x\right),f\left(b_{k,1}\right)\right\} =\min\left\{ 0,\int_{-\infty}^{0}\left(z^{2}+2b_{k,1}z\right)\frac{1}{\sqrt{2\pi}}\exp\left(-\frac{z^{2}}{2}\right)\mathrm{d}z\right\} \\
 & =\min\left\{ 0,\frac{1}{2}-\frac{2}{\sqrt{2\pi}}b_{k,1}\right\} .
\end{align*}
Therefore we reach
\begin{align*}
\gamma_{1}+\gamma_{2} & =\frac{4}{\sqrt{2\pi}}b_{k,1}+f\left(b_{k,2}\right)\geq\frac{4}{\sqrt{2\pi}}b_{k,1}+\inf_{x\leq b_{k,1}}f\left(x\right)=\frac{4}{\sqrt{2\pi}}b_{k,1}+\min\left\{ 0,\frac{1}{2}-\frac{2}{\sqrt{2\pi}}b_{k,1}\right\} \\
 & =\min\left\{ \frac{4}{\sqrt{2\pi}}b_{k,1},\frac{1}{2}+\frac{2}{\sqrt{2\pi}}b_{k,1}\right\} >0,
\end{align*}
namely we have
\begin{equation}
\lim_{t\to\infty}\alpha_{1}\left(t\right)-\beta_{2}\left(t\right)-\beta_{3}\left(t\right)=\gamma_{1}+\gamma_{2}>0.\label{eq:alpha-1-beta-2-beta-3}
\end{equation}
\item Lastly, we will analyze $\alpha_{3}(t)$ and $\beta_{4}(t)$, which
is relatively straightforward. We have
\begin{align*}
\alpha_{3}\left(t\right) & =\mathbb{E}\left[U\left(\mathcal{P}_{S_{1}}\left(\bm{y}\left(t\right)+\bm{e}\right)\right)\ind\left\{ \bm{y}\left(t\right)+\bm{e}\notin\mathsf{B}\left(t\right)\right\} \right]\\
 & =\mathbb{E}\left[U\left(\mathcal{P}_{S_{1}}\left(\bm{y}\left(t\right)+\bm{e}\right)\right)\ind\left\{ \left\Vert \bm{e}\right\Vert _{2}\geq2c_{2}t\right\} \right]\\
 & \leq\mathbb{E}\left[\left\Vert \mathcal{P}_{S_{1}}\left(\bm{y}\left(t\right)+\bm{e}\right)\right\Vert _{2}^{2}\ind\left\{ \left\Vert \bm{e}\right\Vert _{2}\geq2c_{2}t\right\} \right],
\end{align*}
where the last step follows from $U(x)=\max\{0,x^{2}\}\leq x^{2}$.
By the non-expansiveness of Euclidean projection, we have
\[
\left\Vert \mathcal{P}_{S_{1}}\left(\bm{y}\left(t\right)+\bm{e}\right)-\mathcal{P}_{S_{1}}\left(\bm{0}\right)\right\Vert _{2}\leq\left\Vert \bm{y}\left(t\right)+\bm{e}-\bm{0}\right\Vert _{2}=\left\Vert \bm{y}\left(t\right)+\bm{e}\right\Vert _{2}.
\]
Therefore we know that
\[
0\leq\alpha_{3}\left(t\right)\leq\mathbb{E}\left[\left(\left\Vert \bm{y}\left(t\right)+\bm{e}\right\Vert _{2}+\left\Vert \mathcal{P}_{S_{1}}\left(\bm{0}\right)\right\Vert _{2}\right)\ind\left\{ \left\Vert \bm{e}\right\Vert _{2}\geq2c_{2}t\right\} \right].
\]
By taking $t\to\infty$, we immediately have
\begin{equation}
\lim_{t\to\infty}\alpha_{3}\left(t\right)=0.\label{eq:alpha-3}
\end{equation}
Similarly we can also show that 
\begin{equation}
\lim_{t\to\infty}\beta_{4}\left(t\right)=0.\label{eq:beta-4}
\end{equation}
\end{itemize}
From (\ref{eq:U-expectation-1}) we know that 
\[
\alpha_{1}\left(t\right)+\alpha_{2}\left(t\right)+\alpha_{3}\left(t\right)-\beta_{1}\left(t\right)-\beta_{2}\left(t\right)-\beta_{3}\left(t\right)-\beta_{4}\left(t\right)=0
\]
for all $t\geq T$. But from (\ref{eq:alpha-2-beta-1}), (\ref{eq:alpha-1-beta-2-beta-3}),
(\ref{eq:alpha-3}) and (\ref{eq:beta-4}) we know that
\[
\lim_{t\to\infty}\alpha_{1}\left(t\right)+\alpha_{2}\left(t\right)+\alpha_{3}\left(t\right)-\beta_{1}\left(t\right)-\beta_{2}\left(t\right)-\beta_{3}\left(t\right)-\beta_{4}\left(t\right)>0,
\]
which is a contradiction. Therefore we have shown that $l_{k}=1$
and $b_{k,1}=0$ for all $1\leq k\leq K$, which finishes Step 2 and
the proof of Proposition \ref{thm:necessary-condition-intercept}.


\section{Minimax optimality} \label{sec:proof-minimax}

We now present a formal version of Theorem \ref{thm:general_minimax_informal}. Recall that for $i=1,\ldots,n$, the independent review scores $X_{i}\sim p_{\theta_{i}^{\star}}(x)$, where $p_{\theta}(x)=\mathrm{e}^{\theta x-b(\theta)}c(x)$. We have $\mu_{i}^{\star}=\mathbb{E}[X_{i}]=b'(\theta_{i}^{\star})$ and $\mathsf{Var}(X_{i})=b''(\theta_{i}^{\star})$. We assume the true scores lie within a closed interval $[V_{\min},V_{\max}]$, where $V_{\max}$ and $V_{\min}$ can be problem-specific, and define the parameter space
\[
\mathcal{I}_{n}\left(V_{\min},V_{\max}\right)\coloneqq\left\{ \bm{\mu}^{\star}\in\mathbb{R}^{n}:V_{\max}\geq\mu_{\pi^{\star}(1)}^{\star}\geq\cdots\geq\mu_{\pi^{\star}(n)}^{\star}\geq V_{\min}\right\} .
\]
Without loss of generality, we consider only positive scores, i.e., $V_{\min}\geq0$. As before, we assume that $b''(\theta)>0$ on its domain, which essentially requires that the noisy scores have nonzero variance. Then, we have $\theta_{\max}\geq\theta_{\pi^{\star}(1)}^{\star}\geq\cdots\geq\theta_{\pi^{\star}(n)}^{\star}\geq\theta_{\min}$, where $\theta_{\max}=(b')^{-1}(V_{\max})$ and $\theta_{\min}=(b')^{-1}(V_{\min})$. 
Let 
\[
\sigma^{2}\coloneqq\max_{\theta_{\min}\leq\theta\leq\theta_{\max}}b''\left(\theta\right)
\]
represent the largest variance of any random variable $X\sim p_{\theta}(x)$ for any $\theta\in[\theta_{\min},\theta_{\max}]$. We assume that $V_{\max}-V_{\min}\geq\sigma/\sqrt{n}$ to ensure the estimation problem is non-trivial. We proceed by assuming the following regularity condition.

\begin{assumption}\label{assumption:variance}There exist two 
	constants $C_{\mathsf{var}},C_{\mathsf{int}}>0$ such that, there
	exists $[\widetilde{V}_{\min},\widetilde{V}_{\max}]\subseteq[V_{\min},V_{\max}]$
	satisfying $\widetilde{V}_{\max}-\widetilde{V}_{\min}\geq C_{\mathsf{int}}(V_{\max}-V_{\min})$,
	such that 
	\[
	\mathsf{Var}_{X\sim p_{\theta}(x)}\left(X\right)=b''\left(\theta\right)\geq C_{\mathsf{var}}\sigma^{2}\quad\text{where}\quad\theta=\left(b'\right)^{-1}\left(\mu\right)
	\]
	for any $\mu\in[\widetilde{V}_{\min},\widetilde{V}_{\max}]$.
\end{assumption}

Assumption \ref{assumption:variance} is mild, as it essentially requires that the variance of the noisy score not change dramatically when the true score lies within a non-negligible part of the $[V_{\min},V_{\max}]$ interval. After presenting the following minimax-optimal estimation guarantees, we will verify that it is satisfied by common exponential family distributions.

\begin{theorem}[Formal version of Theorem \ref{thm:general_minimax_informal}]\label{thm:general_minimax} 
	Suppose that Assumption \ref{assumption:variance} holds. Then there exist constants $C_{1},C_{2}>0$ depending on $C_{\mathsf{var}}$ and $C_{\mathsf{int}}$ such that 
	\begin{equation}
		\sup_{V_{\min} \le \bmu^{\star} \le V_{\max}}\,\mathbb{E} \left\Vert \widehat{\bmu}-\bm{\mu}^{\star}\right\Vert _{2}^{2}\leq C_{1}n\sigma^{2}\min\left\{ 1,\left(\frac{V_{\max}-V_{\min}}{n\sigma}\right)^{2/3}+\frac{\log n}{n}\right\} ,\label{eq:general_minimax_upper}
	\end{equation}
	where $\widehat{\bm{\mu}}$ is the optimal solution to (\ref{eq:isotonic-regression-no-b}),
	and
	\begin{equation}
		\inf_{\widetilde{\bm{\mu}}} \sup_{V_{\min} \le \bmu^{\star} \le V_{\max}} \mathbb{E}\left\Vert \widetilde{\bm{\mu}}-\bm{\mu}^{\star}\right\Vert _{2}^{2}\geq C_{2}n\sigma^{2}\min\left\{ 1,\left(\frac{V_{\max}-V_{\min}}{n\sigma}\right)^{2/3}+\frac{1}{n}\right\} ,\label{eq:general_minimax_lower}
	\end{equation}
	where the infimum is taken over all estimators $\widetilde{\bm{\mu}}$
	for the true mean vector $\bm{\mu}^{\star}$ based on observed data
	$X_{1},\ldots,X_{n}$. 
	
\end{theorem}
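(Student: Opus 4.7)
The plan is to handle the upper and lower bounds separately, since both directions are known in the Gaussian case and the task is to port them to the general exponential family while leveraging the equivalence \eqref{eq:isotonic-regression-no-b} between the Isotonic Mechanism and a standard least-squares isotonic regression.

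For the upper bound \eqref{eq:general_minimax_upper}, the approach is to invoke (a version of) the risk bound for isotonic regression due to \citet{zhang2002risk}. Since $\widehat{\bmu}$ is the Euclidean projection of $\bX$ onto the isotonic cone containing $\bmu^\star$ and the noise coordinates $X_i-\mu_i^\star$ are independent with variance $b''(\theta_i^\star)\le\sigma^2$, one is essentially in the setting of isotonic regression with heteroscedastic but uniformly bounded variances. Zhang's argument requires mild tail control of the noise; this is obtained for free because exponential family distributions with natural parameter in $[\theta_{\min},\theta_{\max}]$ are sub-exponential, as their cumulant generating functions are analytic in a neighborhood of $0$. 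Combined with the trivial bound $\mathbb{E}\|\widehat{\bmu}-\bmu^\star\|^2\le\mathbb{E}\|\bX-\bmu^\star\|^2=\sum_i b''(\theta_i^\star)\le n\sigma^2$ that follows from the non-expansiveness of projection onto a convex set containing $\bmu^\star$, this yields the advertised minimum.

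For the lower bound \eqref{eq:general_minimax_lower}, the strategy is Fano's method with a two-part construction. The $n\sigma^2\cdot 1/n=\sigma^2$ piece is obtained by a standard two-point Le Cam argument using a pair of mean vectors differing in one coordinate within $[\widetilde{V}_{\min},\widetilde{V}_{\max}]$. The main $(V_{\max}-V_{\min})^{2/3}n^{1/3}\sigma^{4/3}$ piece follows by adapting the hypercube construction of \citet{bellec2015sharp}: partition $\{1,\dots,n\}$ into $k$ equal blocks with $k\asymp((V_{\max}-V_{\min})/\sigma)^{2/3}n^{1/3}$, and build, inside $[\widetilde{V}_{\min},\widetilde{V}_{\max}]$, a family of block-constant monotone means $\{\bmu^{(\bepsilon)}:\bepsilon\in\{0,1\}^k\}$ whose coordinates differ by an appropriately chosen step $\delta$. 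A Varshamov--Gilbert argument then extracts a $2^{ck}$-sized packing with pairwise Hamming distance $\Omega(k)$, so the induced $\ell_2$-separation is of order $\sqrt{(n/k)\cdot k\cdot\delta^2}$.

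The technically novel ingredient, and what I expect to be the main obstacle, is the KL-divergence control needed to drive Fano. For a one-dimensional exponential family, the Bregman identity gives $\mathrm{KL}(p_{\theta_1}\Vert p_{\theta_2})=b(\theta_2)-b(\theta_1)-b'(\theta_1)(\theta_2-\theta_1)=\tfrac12 b''(\xi)(\theta_1-\theta_2)^2$ for some intermediate $\xi$, and by the mean-value theorem applied to $\mu=b'(\theta)$ one has $\theta_1-\theta_2=(\mu_1-\mu_2)/b''(\xi')$ for some $\xi'$. Combining the two and invoking Assumption \ref{assumption:variance} to lower-bound $b''(\xi),b''(\xi')$ by $C_{\mathsf{var}}\sigma^2$ on the sub-interval $[\widetilde{V}_{\min},\widetilde{V}_{\max}]$, one obtains $\mathrm{KL}(p_{\theta_1}\Vert p_{\theta_2})\le C(\mu_1-\mu_2)^2/\sigma^2$, the exact analog of the Gaussian case. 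Plugging this into Fano with the packing above, optimizing $\delta$ and $k$, gives \eqref{eq:general_minimax_lower}. The only delicate point is ensuring $[\widetilde{V}_{\min},\widetilde{V}_{\max}]$ is wide enough (guaranteed by $\widetilde{V}_{\max}-\widetilde{V}_{\min}\ge C_{\mathsf{int}}(V_{\max}-V_{\min})$) so that the packing construction still produces a separation of the correct order, which is exactly why Assumption \ref{assumption:variance} was formulated in this particular way.
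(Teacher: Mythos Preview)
Your proposal is correct and follows essentially the same route as the paper: the upper bound invokes \citet{zhang2002risk} directly, and the lower bound adapts the Varshamov--Gilbert packing of \citet{bellec2015sharp} together with a two-point argument, the exponential-family novelty being exactly the KL control $\mathrm{KL}(p_{\theta_1}\Vert p_{\theta_2})\lesssim(\mu_1-\mu_2)^2/\sigma^2$ you sketch. Two small fixes: in that KL bound you must \emph{upper}-bound $b''(\xi)$ by $\sigma^2$ (this is the definition of $\sigma^2$) and lower-bound only $b''(\xi')$ via Assumption~\ref{assumption:variance}; and for the $\sigma^2$ piece, a single-coordinate perturbation of size $\asymp\sigma$ need not fit inside $[\widetilde V_{\min},\widetilde V_{\max}]$ when $V_{\max}-V_{\min}\ll\sigma$, so the paper instead perturbs all $n$ (identical) coordinates by $c\sigma/\sqrt{n}$.
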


Before delving into the proof, let us first discuss the implication of Theorem \ref{thm:general_minimax} when applied to the following commonly used exponential family distributions.

 \begin{enumerate}
	\item Gaussian distribution $\mathcal{N}(\mu,\sigma^{2})$ with fixed variance
	$\sigma^{2}$ is an exponential family distribution with $\theta=\mu/\sigma^{2}$
	and $b(\theta)=\sigma^{2}\theta^{2}/2$. It can be verified that Assumption
	\ref{assumption:variance} holds with $\widetilde{V}_{\max}=V_{\max}$,
	$\widetilde{V}_{\min}=V_{\min}$, $C_{\mathsf{int}}=1$ and $C_{\mathsf{var}}=1$.
	In comparison to the standard MLE (which simply uses $X_{i}$ to estimate
	$\mu_{i}^{\star}$ for each $i\in \{1,\ldots,n\}$), with an estimation error of
	$\mathbb{E}\Vert\bm{X}-\bm{\mu}^{\star}\Vert^{2}=n\sigma^{2}$,
	the estimation error of isotonic regression is
	\[
	O\left(\min\left\{ 1,\left(\frac{V_{\max}-V_{\min}}{n\sigma}\right)^{2/3}+\frac{\log n}{n}\right\} \right)
	\]
	times smaller, which is significantly better when $V_{\max}-V_{\min}\ll n\sigma$.
	\item Binomial distribution $\mathsf{Binom}(m,p)$ with success probability
	$p$ and fixed number of trials $m$ is an exponential family distribution
	with $\theta=\log[p/(1-p)]$, $b(\theta)=m\log[1+\exp(\theta)]$ and
	$\mu=mp$. Since the mean is upper bounded by $m$ in this setting,
	we consider $V_{\min}=0$ and $V_{\max}=m$. Then $\sigma^{2}=m/4$
	and it can be confirmed that Assumption \ref{assumption:variance} holds
	with, e.g.~$\widetilde{V}_{\max}=3m/4$, $\widetilde{V}_{\min}=m/4$,
	$C_{\mathsf{int}}=1/2$ and $C_{\mathsf{var}}=3/4$. We can check
	that for binomial distribution, both the estimation error bound (\ref{eq:general_minimax_upper})
	and the minimax lower bound (\ref{eq:general_minimax_lower}) scales
	as $\min\{mn,m^{4/3}n^{1/3}\}$, suggesting that isotonic regression
	is minimax-optimal (no need to exclude logarithmic factors). The estimation
	error of the Isotonic Mechanism is $O(\min\{1,(m/n^{2})^{1/3}\})$
	times smaller than the standard MLE (which scales as $mn$), which
	is much better when $n\gg\sqrt{m}$. 
	\item The Poisson distribution $\mathsf{Poisson}(\lambda)$ with rate $\lambda\in\mathbb{R}^{+}$ is an exponential family distribution characterized by $\theta=\log\lambda$, $b(\theta)=\exp(\theta)$, and $\mu=\lambda$. We can verify that $\sigma^{2}=V_{\max}$, and Assumption \ref{assumption:variance} is satisfied with $\widetilde{V}_{\max}=V_{\max}$, $\widetilde{V}_{\min}=\max{V_{\min},V_{\max}/2}$, $C_{\mathsf{int}}=1/2$, and $C_{\mathsf{var}}=1/2$. The estimation error for isotonic regression is
	\[
	O\left(\min\left\{ 1,\left(\frac{V_{\max}-V_{\min}}{n\sqrt{V_{\max}}}\right)^{2/3}+\frac{\log n}{n}\right\} \right)
	\]
	times smaller than the estimation error of the standard MLE, particularly when $V_{\max}-V_{\min}\ll n\sqrt{V_{\max}}$.
	\item The Gamma distribution $\mathsf{Gamma}(m,\beta)$ with probability density
	function
	\[
	f\left(x\right)=\frac{1}{\Gamma\left(m\right)\beta^{m}}x^{m-1}\exp\left(-\frac{x}{\beta}\right),\qquad\forall\,x\in\mathbb{R}^{+}
	\]
	is an exponential family distribution with $\theta=-1/\beta$, $b(\theta)=-m\log(-\theta)$
	and $\mu=m\beta$. We can check that $\sigma^{2}=V_{\max}^{2}/m$,
	and Assumption \ref{assumption:variance} holds with $\widetilde{V}_{\max}=V_{\max}$,
	$\widetilde{V}_{\min}=\max\{V_{\min},V_{\max}/2\}$, $C_{\mathsf{int}}=1/2$
	and $C_{\mathsf{var}}=1/4$. When $V_{\max}-V_{\min}\ll nV_{\max}$, the
	estimation error of isotonic regression is at least $O(\min\{1,(m/n^{2})^{1/3}\})$
	times smaller than the standard MLE, which is significantly better when $n\gg\sqrt{m}$.
\end{enumerate}

For the remainder of this section, we provide the proof of Theorem \ref{thm:general_minimax}.

\subsection{Establishing the upper bound (\ref{eq:general_minimax_upper})}

The upper bound follows from \citet[Theorem 2.2]{zhang2002risk}. We
first identify the quantities in \citet{zhang2002risk} as 
\[
y_{i}=X_{i},\qquad\varepsilon_{i}=X_{i}-\mathbb{E}\left[X_{i}\right]=X_{i}-\mu_{i}^{\star},
\]
and it is straightforward to check that $V=V_{\max}-V_{\min}$, 
\[
\mathbb{E}\left[\varepsilon_{i}\right]=0,\qquad\mathbb{E}\left[\varepsilon_{i}^{2}\right]=\mathsf{var}\left(X_{i}\right)\leq\sigma^{2}.
\]
Then we can apply \citet[Theorem 2.2]{zhang2002risk} to achieve 
\begin{align*}
	R_{n} & \lesssim\sigma\min\left\{ 1,\left[\left(\frac{V_{\max}-V_{\min}}{n\sigma}\right)^{2/3}+\frac{\log n}{n}\right]^{1/2}\right\} 
\end{align*}
and as a result 
\[
\mathbb{E}\left\Vert \bm{\mu}-\bm{\mu}^{\star}\right\Vert^{2}=nR_{n}^{2}\left(f\right)\leq C_{1}n\sigma^{2}\min\left\{ 1,\left(\frac{V_{\max}-V_{\min}}{n\sigma}\right)^{2/3}+\frac{\log n}{n}\right\} 
\]
for some universal constant $C_{1}>0$, which finishes the proof for
the upper bound (\ref{eq:general_minimax_upper}).

\subsection{Establishing the lower bound (\ref{eq:general_minimax_lower})}

We follow the proof idea of \citet[Corollary 5]{bellec2015sharp},
which focuses on the case when the observations are Gaussian, to establish
the minimax lower bound for general exponential family distributions.
It suffices to consider the true scores in the interval $[\widetilde{V}_{\min},\widetilde{V}_{\max}]$,
which is a subset of $[V_{\min},V_{\max}]$. This motivates one to
define 
\[
\widetilde{V}\coloneqq\widetilde{V}_{\max}-\widetilde{V}_{\min}\geq C_{\mathsf{int}}\left(V_{\max}-V_{\min}\right).
\]
We look at the following two cases separately. 

\paragraph{Case 1: when $n$ is not too small.}

We first consider the case when 
\begin{equation}
	\left(\frac{\widetilde{V}}{n\sigma}\right)^{2/3}\geq\frac{4}{n}.\label{eq:proof-minimax-0}
\end{equation}
We would like to show that 
\begin{equation}
	\inf_{\widehat{\bm{\mu}}}\sup_{\bm{\mu}^{\star}\in\mathcal{I}_{n}\left(V_{\min},V_{\max}\right)}\mathbb{E}\,\big\Vert\widehat{\bm{\mu}}-\bm{\mu}^{\star}\big\Vert^{2}\geq C_{2}n\sigma^{2}\min\left\{ 1,\left(\frac{V_{\max}-V_{\min}}{n\sigma}\right)^{2/3}\right\} \label{eq:proof-minimax-1}
\end{equation}
for some universal constant $C_{2}>0$. Let 
\begin{equation}
	k\coloneqq\min\left\{ \left\lfloor \left(\frac{n\widetilde{V}^{2}}{c^{2}\sigma^{2}}\right)^{1/3}\right\rfloor ,n\right\} \label{eq:proof-minimax-2}
\end{equation}
for some constant $c=C_{\mathsf{var}}/16$. We know from (\ref{eq:proof-minimax-0})
that $k\geq8$ as long as $n\geq8$. Without loss of generality, we
assume that $n$ is a multiple of $k$ for simplicity; the proof for
the general case is similar. In view of the Varshamov-Gilbert bound
\citep[Lemma 2.9]{tsybakov2008introduction}, there exists $\Omega\coloneqq\{\bm{\omega}^{(0)},\ldots,\bm{\omega}^{(M)}\}\subset\{0,1\}^{k}$
satisfying $M\geq2^{k/8}$, $\bm{\omega}^{(0)}=\bm{0}$ and 
\begin{equation}
	d_{\mathsf{H}}\left(\bm{\omega}^{(i)},\bm{\omega}^{(j)}\right)\geq\frac{k}{8},\qquad\forall\,0\leq i<j\leq M,\label{eq:proof-minimax-3}
\end{equation}
where $d_{\mathsf{H}}(\bm{x},\bm{y})$ is the Hamming distance $\sum_{i=1}^{n}\ind\{x_{i}\neq y_{i}\}$
between two vectors $\bm{x},\bm{y}\in\mathbb{R}^{n}$. Let
\begin{equation}
	\gamma=c\sqrt{\frac{\sigma^{2}k}{n}}.\label{eq:proof-minimax-3.5}
\end{equation}
Then for each $\bm{\omega}\in\Omega$, we define $\bm{\mu}^{\bm{\omega}}\in\mathbb{R}^{n}$
as 
\[
\mu_{i}^{\bm{\omega}}=\widetilde{V}_{\min}+\left\lfloor \frac{i-1}{n}k\right\rfloor \frac{1}{k}\widetilde{V}+\gamma\omega_{\lfloor\frac{i-1}{n}k\rfloor+1}
\]
for $1\leq i \leq n$. Note that in view of (\ref{eq:proof-minimax-2}), we have $\gamma\leq\widetilde{V}/k$,
and it is straightforward to check that
\[
\widetilde{V}_{\min}\leq\mu_{1}^{\bm{\omega}}\leq\mu_{i}^{\bm{\omega}}\leq\mu_{n}^{\bm{\omega}}\leq\widetilde{V}_{\max}
\]
holds for any $1\leq i\leq n$. Therefore for any $\bm{\omega}\in\Omega$
we have $\bm{\mu}^{\bm{\omega}}\in\mathcal{I}_{n}(V_{\min},V_{\max})$.
In addition, recall that $n$ is a multiple of $k$, therefore for
any $\bm{\omega},\bm{\omega}'\in\Omega$ 
\begin{align}
	\big\Vert\bm{\mu}^{\bm{\omega}}-\bm{\mu}^{\bm{\omega}'}\big\Vert^{2} & =\gamma^{2}\sum_{i=1}^{n}\big(\omega_{\lfloor\frac{i-1}{n}k\rfloor+1}-\omega_{\lfloor\frac{i-1}{n}k\rfloor+1}'\big)^{2}=\frac{\gamma^{2}n}{k}d_{\mathsf{H}}\left(\omega,\omega'\right)\overset{\text{(i)}}{\geq}\frac{\gamma^{2}n}{8}=\frac{c^{2}}{8}\sigma^{2}k\nonumber \\
	& =\frac{c^{2}}{8}\sigma^{2}\min\left\{ \left\lfloor \left(\frac{n\widetilde{V}^{2}}{c^{2}\sigma^{2}}\right)^{1/3}\right\rfloor ,n\right\} \nonumber \\
	& \overset{\text{(ii)}}{\geq}\frac{c^{2}}{8}C_{\mathsf{int}}^{2/3}\sigma^{2}\min\left\{ \left\lfloor \left(\frac{n\left(V_{\max}-V_{\min}\right)^{2}}{c^{2}\sigma^{2}}\right)^{1/3}\right\rfloor ,n\right\} \label{eq:proof-minimax-4}
\end{align}
as long as $\bm{\omega}\neq\bm{\omega}'$. Here (i) follows from (\ref{eq:proof-minimax-3}),
and (ii) holds since $\widetilde{V}\geq C_{\mathsf{int}}V$. 

Consider any $\mu_{1},\mu_{2}\in[\widetilde{V}_{\min},\widetilde{V}_{\max}]$,
and let $\theta_{1}=(b')^{-1}(\mu_{1})$ and $\theta_{2}=(b')^{-1}(\mu_{2})$.
Then the Kullback-Leibler (KL) divergence between two distributions
$p_{\theta_{1}}$ and $p_{\theta_{2}}$ satisfies

\begin{align}
	\mathsf{KL}\left(p_{\theta_{1}}\,\Vert\,p_{\theta_{2}}\right) & =\int\log\frac{p_{\theta_{1}}\left(x\right)}{p_{\theta_{2}}\left(x\right)}p_{\theta_{1}}\left(x\right)\mathrm{d}x=\int\left[\left(\theta_{1}-\theta_{2}\right)x-b\left(\theta_{1}\right)+b\left(\theta_{2}\right)\right]p_{\theta_{1}}\left(x\right)\mathrm{d}x\nonumber \\
	& =\left(\theta_{1}-\theta_{2}\right)b'\left(\theta_{1}\right)-b\left(\theta_{1}\right)+b\left(\theta_{2}\right)\leq\frac{1}{2}\max_{\theta_{\min}\leq\theta\leq\theta_{\max}}b''\left(\theta\right)\left(\theta_{1}-\theta_{2}\right)^{2}\nonumber \\
	& =\frac{\sigma^{2}}{2}\left(\theta_{1}-\theta_{2}\right)^{2}\leq\frac{\left(\mu_{1}-\mu_{2}\right)^{2}}{2C_{\mathsf{var}}^{2}\sigma^{2}},\label{eq:proof-minimax-5}
\end{align}
where the last relation follows from
\[
\left(\mu_{1}-\mu_{2}\right)^{2}=\left[b'\left(\theta_{1}\right)-b'\left(\theta_{2}\right)\right]^{2}\geq\min_{\theta_{\min}\leq\theta\leq\theta_{\max}}\left[b''\left(\theta\right)\right]^{2}\left(\theta_{1}-\theta_{2}\right)^{2}\geq C_{\mathsf{var}}^{2}\sigma^{4}\left(\theta_{1}-\theta_{2}\right)^{2},
\]
which is a direct consequence of Assumption \ref{assumption:variance}.
For each $\bm{\omega}\in\Omega$, let $P_{\bm{\omega}}$ be the data
distribution when $\bm{\mu}^{\star}=\bm{\mu}^{\bm{\omega}}$. Then
we have 
\begin{align}
	\mathsf{KL}\left(P_{\bm{\omega}}\,\Vert\,P_{\bm{0}}\right) & \overset{\text{(i)}}{=}\sum_{i=1}^{n}\mathsf{KL}\left(p_{\theta_{i}^{\bm{\omega}}}\,\Vert\,p_{\theta_{i}^{\bm{0}}}\right)\overset{\text{(ii)}}{\leq}\sum_{i=1}^{n}\frac{\left(\mu_{i}^{\bm{\omega}}-\mu_{i}^{0}\right)^{2}}{2C_{\mathsf{var}}^{2}\sigma^{2}}=\frac{1}{2C_{\mathsf{var}}^{2}\sigma^{2}}\left\Vert \bm{\mu}^{\bm{\omega}}-\bm{\mu}^{\bm{0}}\right\Vert ^{2}\nonumber \\
	& \overset{\text{(iii)}}{=}\frac{1}{2C_{\mathsf{var}}^{2}\sigma^{2}}\frac{\gamma^{2}n}{k}d_{\mathsf{H}}\left(\bm{\omega},\bm{0}\right)\overset{\text{(iv)}}{\leq}\frac{\gamma^{2}n}{2C_{\mathsf{var}}^{2}\sigma^{2}}\overset{\text{(v)}}{\leq}\frac{c^{2}}{2C_{\mathsf{var}}^{2}}k\nonumber \\
	& \leq\frac{8c^{2}}{C_{\mathsf{var}}^{2}}\log_{2}M\overset{\text{(vi)}}{\leq}\frac{1}{16}\log_{2}M<\frac{1}{8}\log M.\label{eq:proof-minimax-6}
\end{align}
Here (i) follows from the additivity property of the KL divergence
(cf.~\citet[page 85]{tsybakov2008introduction}) and for each $i\in[n]$,
$\theta_{i}^{\bm{\omega}}=(b')^{-1}(\mu_{i}^{\bm{\omega}})$; (ii)
follows from (\ref{eq:proof-minimax-5}); (iii) follows from the first
line of (\ref{eq:proof-minimax-4}); (iv) holds since the Hamming
distance between $k$-dimensional vectors is at most $k$; (v) follows
from (\ref{eq:proof-minimax-3.5}); and (vi) holds when $c\leq C_{\mathsf{var}}/16$.
Taking collectively (\ref{eq:proof-minimax-4}) and (\ref{eq:proof-minimax-6}),
we can utilize \citet[Theorem 2.7]{tsybakov2008introduction} to establish
the minimax lower bound (\ref{eq:proof-minimax-1}).

\paragraph{Case 2: when $n$ is small.}

We are left with the case when 
\begin{equation}
	\left(\frac{\widetilde{V}}{n\sigma}\right)^{2/3}\leq\frac{4}{n},\label{eq:proof-minimax-7}
\end{equation}
and we would like to show that 
\begin{equation}
	\inf_{\widehat{\bm{\mu}}}\sup_{\bm{\mu}^{\star}\in\mathcal{I}_{n}\left(V_{\min},V_{\max}\right)}\mathbb{E}\,\big\Vert\widehat{\bm{\mu}}-\bm{\mu}^{\star}\big\Vert^{2}\geq C_{2}\sigma^{2}.\label{eq:proof-minimax-8}
\end{equation}
We can consider the special case when all $\mu_{i}^{\star}$ are identical
and this information is known a priori. Then the problem reduces to
estimating $\mu^{\star}\in[V_{\min},V_{\max}]$ from i.i.d.~data
$X_{1},\ldots,X_{n}\sim p_{\theta^{\star}}(x)$
where $\theta^{\star}=(b')^{-1}(\mu^{\star})$, and it suffices to
show that 
\begin{equation}
	\inf_{\widehat{\mu}}\max_{\mu^{\star}\in\left[V_{\min},V_{\max}\right]}\big\vert\widehat{\mu}-\mu^{\star}\big\vert^{2}\geq C_{2}\frac{\sigma^{2}}{n}.\label{eq:proof-minimax-9}
\end{equation}
for some $C_{2}>0$. Let $\mu_{1}=\widetilde{V}_{\max}$ and $\mu_{2}=\widetilde{V}_{\max}-c\sigma/\sqrt{n}$
for some sufficiently small constant $c>0$. Since $\widetilde{V}_{\max}-\widetilde{V}_{\min}\geq C_{\mathsf{int}}(V_{\max}-V_{\min})$
and $V_{\max}-V_{\min}\geq\sigma/\sqrt{n}$, we know that $\mu_{2}\geq\widetilde{V}_{\min}$
as long as $c\leq C_{\mathsf{int}}$. Denote by $P_{\mu}$ the data
distribution when the sample average is $\mu$, and let $\theta_{1}=(b')^{-1}(\mu_{1})$
and $\theta_{2}=(b')^{-1}(\mu_{2})$. We have
\[
\mathsf{KL}\left(P_{\mu_{1}}\,\Vert\,P_{\mu_{2}}\right)\overset{\text{(i)}}{=}n\mathsf{KL}\left(p_{\theta_{1}}\,\Vert\,p_{\theta_{2}}\right)\overset{\text{(ii)}}{\leq}n\frac{\left(\mu_{1}-\mu_{2}\right)^{2}}{2C_{\mathsf{var}}^{2}\sigma^{2}}=\frac{c}{2C_{\mathsf{var}}^{2}}\overset{\text{(iii)}}{\leq}\frac{1}{8}.
\]
Here (i) follows from the additivity property of the KL divergence
(cf.~\citet[Page 85]{tsybakov2008introduction}); (ii) follows from
(\ref{eq:proof-minimax-5}); and (iii) holds as long as $c\leq C_{\mathsf{var}}^{2}/4$.
Then we can apply \citet[Theorem 2.2]{tsybakov2008introduction} to
obtain the desired minimax lower bound (\ref{eq:proof-minimax-9}),
which in turn establishes (\ref{eq:proof-minimax-8}).

Finally, taking (\ref{eq:proof-minimax-1}) and (\ref{eq:proof-minimax-8})
collectively gives the minimax lower bound (\ref{eq:general_minimax_lower})
as claimed.

\section{Numerical experiments}

In this section, we conduct numerical experiments to illustrate the performance of the Isotonic Mechanism on a real data example from ICML 2023 and synthetic data.

\subsection{The ICML 2023 data}
\label{sec:icml-2023-data}

To test the Isotonic Mechanism, the second author of the present paper led an experiment at ICML 2023 through a collaboration between \url{openreview.net} and \url{openrank.cc}. The latter website was recently developed specifically for this experiment. Being the second largest conference in machine learning, ICML received a total of 6,538 submissions from 18,515 authors in 2023. Among 4,505 authors who submitted at least two papers, 1,331 authors provided the rankings of their papers for our experiment.

In evaluating the performance of the Isotonic Mechanism on the ICML 2023 ranking data, one challenge arises from the absence of ground-truth scores of the submissions, namely, ${\mu}_1^\star$, $\ldots$, ${\mu}_n^\star$. Recognizing that reviewers also provided their levels of confidence for their review scores, our approach is to use scores given by confident reviewers as a \textit{surrogate} for the ground-truth scores. More precisely, let a submission be reviewed by $m$ reviewers, with scores $X^{(1)}$, $\ldots$, $X^{(m)}$. Without loss of generality, suppose that the first $m - 1$ reviewers were more confident than the last reviewer.\footnote{In the case of ties, we pick the last reviewer from the least confident reviewers uniformly at random.} Formally, we treat the average from the $m-1$ relative confident scores,
\begin{equation}\label{eq:surrogate}
\widetilde{\mu}^\star = \frac{1}{m-1}\sum_{j=1}^{m-1} X^{(j)},
\end{equation}
as the surrogate for the ground-truth quality of this submission and use the least confident review score, $\widetilde X = X^{(m)}$, as the ``observed'' review score of the same submission. To facilitate data processing, we discarded 195 submissions that only had one review score. However, it is worth mentioning that, in practice, the average would be taken over all $m$ review scores instead of some $m-1$ review scores. As such, these 195 submissions could be included when implementing the Isotonic Mechanism in actual usage.

\begin{table}[]
	\centering
	\begin{tabular}{c|c|c|c|c|c|c|c|c}
		\hline
		\#Submissions                             & 2       & 3       & 4       & 5       & 6       & 7       & 8       & 9       \\ 
		Sample size                                  & 786     & 284     & 88      & 52      & 28      & 11      & 9       & 2       \\ \hline
		$\widehat{\mathsf{MSE}}_{\mathsf{raw}}$ & 2.0908  & 1.9414  & 2.4904  & 2.0741  & 1.9552  & 1.6054  & 1.5789  & 1.1605  \\ 
		$\widehat{\mathsf{MSE}}_{\mathsf{IM}}$  & 1.8290  & 1.7375  & 1.9842  & 1.6365  & 1.5720  & 1.2875  & 0.9502  & 0.9420  \\ 
		Improvement                                  & 12.52\% & 10.50\% & 20.33\% & 21.10\% & 19.60\% & 19.80\% & 39.82\% & 18.83\% \\ \hline\hline
		\#Submissions                             & 10       & 11       & 12       & 13     &14   & 15       & 16       & 17               \\ 
		Sample size                                  & 2     & 1     & 1      & 2   &0   & 1      & 2      & 2               \\ \hline
		$\widehat{\mathsf{MSE}}_{\mathsf{raw}}$ & 2.6139  & 2.0783  & 2.1991  & 1.8750 & NA & 1.2741  & 2.2474  & 1.5613     \\ 
		$\widehat{\mathsf{MSE}}_{\mathsf{IM}}$  & 1.3556  & 1.8294  & 1.3935  & 0.9850 & NA & 1.1574  & 1.7899  & 0.7040     \\
		Improvement                                  & 48.14\% & 11.98\% & 36.63\% & 47.47\% & NA & 9.16\% & 20.36\% & 54.91\%   \\ \hline
	\end{tabular}
	
	\caption{Experimental results on the ICML 2023 ranking data. The improvement of the Isotonic Mechanism over the original review scores is defined as $\big(\widehat{\mathsf{MSE}}_{\mathsf{raw}}-\widehat{\mathsf{MSE}}_{\mathsf{IM}}\big)/\widehat{\mathsf{MSE}}_{\mathsf{raw}}$. For each $2 \le n \le 17$, the comparison of the two approaches is evaluated for authors of $n$ submissions. }
\label{table:icml}
\end{table}

Consider an author $A$ who submitted $n$ papers with surrogate ground-truth $\widetilde{\mu}_1^\star$, $\ldots$, $\widetilde{\mu}_n^\star$ and noisy scores $\widetilde{X}_1$, $\ldots$, $\widetilde{X}_n$.\footnote{We excluded 60 authors in our experiment whose rankings were not informative in the sense that all papers of the same author were ranked first. } We run the Isotonic Mechanism using the ranking provided by this author to produce the adjusted scores $\widehat{\mu}_1$, $\ldots$, $\widehat{\mu}_n$. With the surrogate ground-truth in place, we evaluate the performance of the unadjusted review scores and the Isotonic Mechanism as follows:
\begin{equation}\label{eq:two-errors}
\widehat{\mathsf{MSE}}_{\mathsf{raw},A} = \frac{1}{n}\sum_{j=1}^{n} \left(\widetilde{X}_j-\widetilde{\mu}_j^\star\right)^2 \quad\text{and}\quad 
\widehat{\mathsf{MSE}}_{\mathsf{IM},A} = \frac{1}{n}\sum_{j=1}^{n} \left(\widehat{\mu}_j-\widetilde{\mu}_j^\star\right)^2.
\end{equation}
Next, we calculate the average across all authors with the same number of submissions for the two aforementioned errors:
\[
\widehat{\mathsf{MSE}}_{\mathsf{raw},n} = \frac{1}{|\mathcal{A}_n|} \sum_{A \in\mathcal{A}_n} \widehat{\mathsf{MSE}}_{\mathsf{raw},A} \quad\text{and}\quad 
\widehat{\mathsf{MSE}}_{\mathsf{IM},n} = \frac{1}{|\mathcal{A}_n|} \sum_{A \in\mathcal{A}_n} \widehat{\mathsf{MSE}}_{\mathsf{IM}, A},
\]
where $\mathcal{A}_n$ is the set of all authors who submitted $n$ papers. The results are presented in Table~\ref{table:icml}, which shows that the Isotonic Mechanism outperforms the original review scores consistently. In general, the gain becomes more significant as the number of submissions increases. In particular, it is nearly 20\% or more for authors with more than three submissions.

\subsection{Synthetic data}

In the analysis of the ICML 2023 data in Section~\ref{sec:icml-2023-data}, there is a mismatch that reduces the effectiveness of the Isotonic Mechanism. The error criterion $\widehat{\mathsf{MSE}}_{\mathsf{IM},A}$ utilizes the surrogate ground-truth $\widetilde{\mu}_1^\star$, $\ldots$, $\widetilde{\mu}_n^\star$, whereas the Isotonic Mechanism uses the author-provided ranking, which is presumably about ${\mu}_1^\star$, $\ldots$, ${\mu}_n^\star$. In real-world applications, however, this mismatch would not occur. Thus, in practical scenarios, the improvement gained from the Isotonic Mechanism might be even more significant than what is observed in Section~\ref{sec:icml-2023-data}.

To make a comparison from this viewpoint, we resort to synthetic data that mimic the ICML 2023 data. Let $\mathcal{X}_{\textnormal{ICML2023}}$ denote the collection of average review scores of all 6,345 submissions to ICML 2023.\footnote{Among the total 6,538 submission to ICML 2023, 193 were desk rejected or withdrawn before review scores were posted.} For any $2 \le n \le 17$, we generate a synthetic review instance as follows:
\begin{itemize}
	\item An author submits $n$ papers with ground-truth quality $\mu_1^\star,\ldots,\mu_n^\star$ that are sampled with replacement from $\mathcal{X}_{\textnormal{ICML2023}}$. 
	\item For each $1 \le i \le n$, let the average review score $X_i$ of the $i$th paper be the average of 3 i.i.d.~$\mathsf{Binomial}(10,\mu_i^\star/10)$ random variables.\footnote{Here, the number 3 was chosen because it is the median number of reviewers assigned to an ICML 2023 submission.}
	\item Run isotonic regression to obtain $\widehat{\mu}_1,\ldots,\widehat{\mu}_n$ for the synthetic review scores $X_1,\ldots,X_n$ and the ranking of $\mu_1^\star,\ldots,\mu_n^\star$. Calculate $\widehat{\mathsf{MSE}}_{\mathsf{raw}}$ and $\widehat{\mathsf{MSE}}_{\mathsf{IM}}$ as \eqref{eq:two-errors}, with $\widetilde{X}_j$ and  $\widetilde{\mu}_j^\star$ replaced by $X_j$ and ${\mu}_j^\star$, respectively.

\end{itemize}

\begin{table}[]
	\centering
	\begin{tabular}{c|c|c|c|c|c|c|c|c}
		\hline
		\#Submissions                           & 2       & 3       & 4       & 5       & 6       & 7       & 8       & 9       \\ \hline
		$\mathsf{Mean}(\widehat{\mathsf{MSE}}_{\mathsf{IM}})$ & 0.6408  &  0.6180  &  0.5619 &   0.4968  & 0.4558  &  0.4384  &  0.3942  &  0.3853  \\ 
		$\mathsf{Std}(\widehat{\mathsf{MSE}}_{\mathsf{IM}})$ & 0.6490 &   0.5172  &  0.4285 &   0.3561   & 0.3187  &  0.2825  &  0.2382 &   0.2379  \\ \hline
		$\mathsf{Mean}(\widehat{\mathsf{MSE}}_{\mathsf{raw}})$ & 0.7467  &  0.8005  &  0.8339  &  0.7911  &  0.7918  &  0.8195  &  0.7749  &  0.7884  \\ 
		$\mathsf{Std}(\widehat{\mathsf{MSE}}_{\mathsf{raw}})$ & 0.7241  &  0.6362  &  0.5739  &  0.5038   & 0.4570   & 0.4351  &  0.3651  &  0.3668  \\ 
		Improvement                                  & 14.18\% &  22.80\% &  32.61\% &   37.20\% &   42.43\% &   46.50\% &   49.12\% &   51.13\%  \\ \hline \hline
		\#Submissions                           & 10       & 11       & 12       & 13       & 14       & 15       & 16       & 17       \\ \hline
		$\mathsf{Mean}(\widehat{\mathsf{MSE}}_{\mathsf{IM}})$ & 0.3639  &  0.3406  &  0.3230  &  0.3176  &  0.2932  &  0.2910  &  0.2736  &  0.2677  \\ 
		$\mathsf{Std}(\widehat{\mathsf{MSE}}_{\mathsf{IM}})$ & 0.2245  &  0.1829  &  0.1756  &  0.1698   & 0.1507  &  0.1496  &  0.1339   & 0.1338  \\ \hline
		$\mathsf{Mean}(\widehat{\mathsf{MSE}}_{\mathsf{raw}})$ & 0.7925  &  0.7932  &  0.7934  &  0.7996  &  0.7900 &   0.7899  &  0.7884  &  0.8001  \\ 
		$\mathsf{Std}(\widehat{\mathsf{MSE}}_{\mathsf{raw}})$ & 0.3724  &  0.3329  &  0.3183  &  0.3007  &  0.2804 &   0.2886  &  0.2751  &  0.2738  \\ 
		Improvement                                  & 54.09\% &   57.06\% &   59.29\% &   60.29\% &   62.89\% &   63.16\% &   65.30\% &   66.55\%   \\ \hline
	\end{tabular}
	
	\caption{Experimental results on the synthetic data. The mean and standard deviation of the estimation errors are computed from 1,000 independent runs for each number of submissions.}
	\label{table:synthetic}
\end{table}

We run 1,000 independent trials for each $n$ to compute the mean and the standard deviation of the two estimation errors. Our results, shown in Table \ref{table:synthetic}, indeed demonstrates more significant gain of the Isotonic Mechanism than Table~\ref{table:icml} does. Even for a small number of submissions such as $n = 2$ or $3$, the gain of using the Isotonic Mechanism is already noticeable.

\subsection{More simulations on nonconvex utility functions}


\begin{figure}[t]
	\centering
	
	\begin{tabular}{c}
		\includegraphics[scale=0.6]{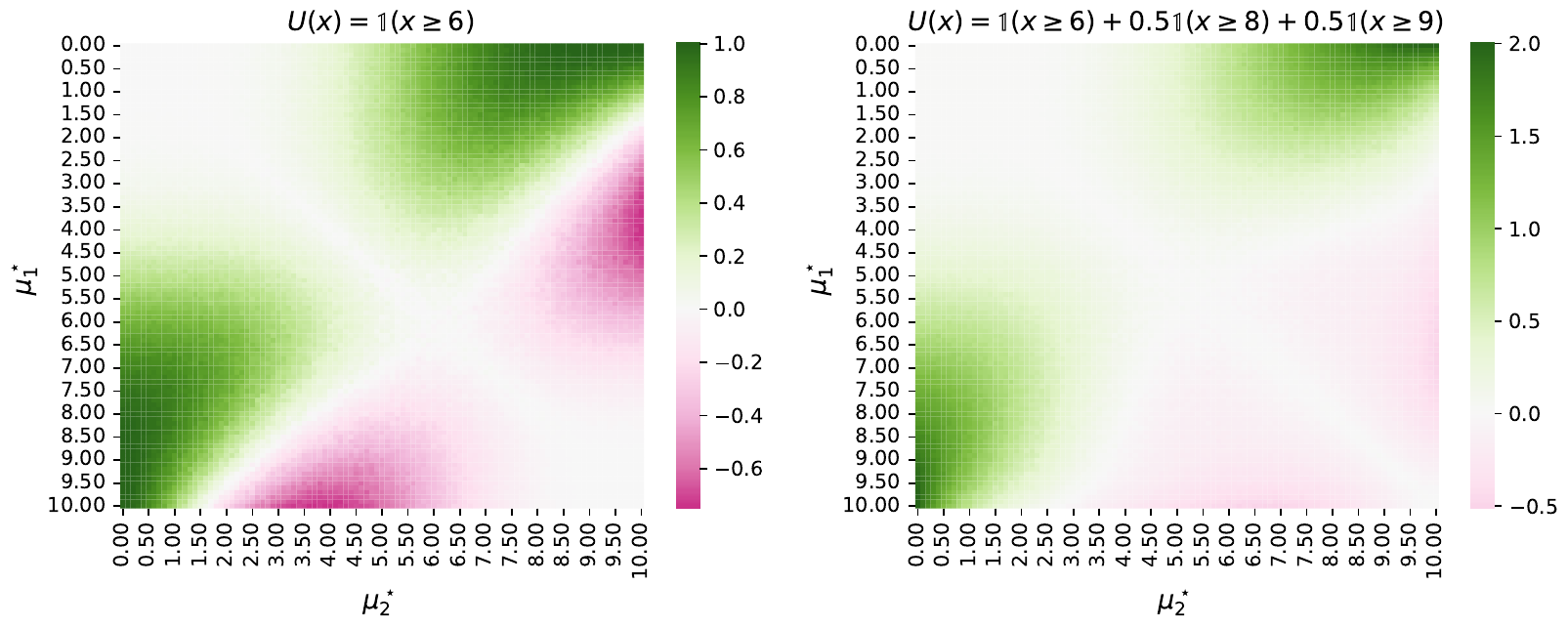}
	\end{tabular}
	
	\caption{The difference of utility between reporting the correct and incorrect ranking, for two different types of utility functions. The difference is positive (resp.~negative) means that the author has higher utility when reporting the correct (resp.~incorrect) ranking, and is represented by the green (resp.~red) color. \label{fig:heatmap}}
\end{figure}

In this section, we design numerical experiments to study whether the theory established in this paper still hold beyond the current model assumptions. Our truthfulness result (cf.~Theorem~\ref{thm:truth-telling}) relies on the convex utility assumption (cf.~Assumption~\ref{ass:convex}). While this assumption is satisfied if the author hopes to achieve a higher score to increase the chance of winning oral/spotlight presentation or best paper awards, it does not applies to the case where an author only cares about the acceptance of the paper. In this case, the utility of such an author can be given by an indicator function $U(x) = \ind \{ x \geq c\}$ for some threshold $c>0$. For example, when the review scores are between 1 and 10, taking $c=6$ reflects the utility that the author's utility only depends on the acceptance of the paper. A natural question is, under such a nonconvex utility function, whether it is possible for the author to get a higher overall utility by dishonestly manipulating the reported ranking. 

To answer this question, we consider the setting when an author submits two papers with true scores $\mu_1^\star,\mu_2^\star \in [0,10]$, and they receive noisy review scores $X_i \sim \mathsf{Bernoulli}(10,\mu_i^\star/10)$ for $i=1,2$ respectively. In this case, there are only two possible rankings(the ground truth ranking and the reversed ranking). We use Monte Carlo methods to evaluate the difference of the overall utility of reporting the true and reversed rankings for all possible pairs $(\mu_1^\star,\mu_2^\star) \in [0,10]^2$, namely
\[
\EE \left[ U( \widehat \mu_1 ) + U( \widehat \mu_2 ) \right] - \EE \left[ U(  \bar{\mu}_1 ) + U( \bar \mu_2 )\right], 
\] 
where $(\widehat{\mu}_1,\widehat{\mu}_2)$ (resp.~$(\bar{\mu}_1,\bar{\mu}_2)$) are the output of the isotonic mechanism with ground truth (resp.~reversed) ranking. The result for the utility function $U(x) = \ind\{x \geq 6\}$ is presented in the left panel of Figure~\ref{fig:heatmap}, which shows that even if one only cares the acceptance of the papers, for most $(\mu_1^\star,\mu_2^\star)$ pairs, the utility of reporting truthfully is still higher, with the exception when the difference between $\mu_1^\star$ and $\mu_2^\star$ is very large. Namely, it is possible to sacrifice the score of a very good paper a bit to gain a better chance for a much worse paper to get accepted. However we believe that this can hardly happen in practice, because of the following reasons.
\begin{itemize}
	\item It is reasonable to believe that an author capable of writing a very good paper (e.g., with true score $8$ -- $9$) will prefer to seek for higher chance of winning paper awards or oral presentation with such a good paper, rather than using it to save a low-quality paper.
	\item Given the review scores are highly noisy, it is actually risky to use this strategy in practice. Although the expected utility might be higher, there is a significant chance that both papers can end up being rejected, and we believe that most authors  may not want to take this risk.
	\item Most ML papers are written by several co-authors, who will submit their own rankings. It is not likely that they all wish to save a low-quality paper using the good one they co-authored. If they do not reach such a consensus and only one co-author is trying to manipulate his or her rank, we believe it is not difficult to detect this outlier from other co-authors' ranking.
\end{itemize}

Inspired by the above discussion, we also consider another nonconvex utility function
\[
U(x) = \ind\{x \geq 6\} + 0.5\ind\{x\geq 8\}+0.5\ind\{x\geq 9\},
\]
which also incorporates the utility from winning paper awards or oral presentations. The right panel of Figure~\ref{fig:heatmap} depicts the result for this utility function, which suggests that there is almost no gain in increasing the utility function by manipulating the ranking.

\section{Discussion}

In this paper, we examine the problem of eliciting private information from an owner to estimate a score vector generated from an exponential family distribution with varying parameters. Our primary contribution demonstrates that the Isotonic Mechanism \citep{su2021you, su2022truthful} maintains truthfulness in this extended regime, as the owner would optimize her convex utility by providing the ground-truth ranking for estimation. Additionally, we establish that the mechanism's truthfulness necessitates pairwise comparisons between the score means under certain conditions, which directly illustrates the optimality of ranking in truthful private information elicitation. Our analysis reveals that the Isotonic Mechanism achieves near-optimal estimation properties from a minimax perspective in a regime of bounded total variation, with the estimator produced by the mechanism significantly outperforming the original scores. This extension is made possible through the introduction of several new technical elements for analyzing the Isotonic Mechanism.

The extended Isotonic Mechanism is particularly well-suited for applications such as peer review in large machine learning and artificial intelligence conferences (e.g., ICML and NeurIPS), where review scores are integer-valued\footnote{In our setting, we consider the average score over three or four review scores, resulting in an integer multiple of $\frac13$ or $\frac14$.} and exhibit a strong dependence between mean and variance. The flexibility and richness of exponential family distributions aptly address this dependence. Furthermore, a key feature of the extended mechanism is its independence from the specific form of the exponential family distribution that generates the observations, rendering it easy to implement in practice.

Several important directions for future research emerge from our current work. One immediate question concerns further investigation of the necessary conditions for truthfulness by examining whether the intercepts in Theorem~\ref{thm:necessary-condition-slope} are equal to zero beyond the Gaussian case of Proposition~\ref{thm:necessary-condition-intercept}. Additionally, extending our mechanism to accommodate multivariate scores would be a welcome advance, which would allow for the evaluation of distinct aspects of a paper (e.g., novelty and writing quality). This multidimensional approach could provide a more nuanced and comprehensive evaluation framework. Another promising direction involves developing a ranking method for exponential family distributions in the context of multiple co-authors~\citep{wu2023isotonic}. From an empirical standpoint, assessing the robustness of the Isotonic Mechanism in scenarios where the author might have uncertainty in ranking the submissions would be valuable. This extension would explore situations where the author-provided ranking is only approximately truthful. More broadly, for journal submissions where authors typically submit one paper at a time, a ``credit'' system incorporating submission history with self-ratings could extend the Isotonic Mechanism to the sequential setting. Such a system could balance short-term evaluations with long-term reputational effects, presenting a worthwhile avenue for future research in peer review mechanisms.


\section*{Acknowledgements}
J.~Fan was supported by ONR grant N00014-19-1-2120 and the NSF grants DMS-2053832,  DMS-2210833, and DMS-2412029.
W.J.~Su was supported in part by NSF grants CCF-1934876 and CAREER DMS-1847415. Y.~Yan was supported in part by the Charlotte Elizabeth Procter Honorific Fellowship from Princeton University and the Norbert Wiener Postdoctoral Fellowship from MIT. We would like to thank Bart Lipman for suggesting relevant references.

\bibliographystyle{plainnat}
\bibliography{bibfile}

\begin{thebibliography}{24}
\providecommand{\natexlab}[1]{#1}
\providecommand{\url}[1]{\texttt{#1}}
\expandafter\ifx\csname urlstyle\endcsname\relax
  \providecommand{\doi}[1]{doi: #1}\else
  \providecommand{\doi}{doi: \begingroup \urlstyle{rm}\Url}\fi

\bibitem[Barlow and Brunk(1972)]{barlow1972isotonic}
Richard~E Barlow and Hugh~D Brunk.
\newblock The isotonic regression problem and its dual.
\newblock \emph{Journal of the American Statistical Association}, 67\penalty0
  (337):\penalty0 140--147, 1972.

\bibitem[Battaglini(2002)]{battaglini2002multiple}
Marco Battaglini.
\newblock Multiple referrals and multidimensional cheap talk.
\newblock \emph{Econometrica}, 70\penalty0 (4):\penalty0 1379--1401, 2002.

\bibitem[Bellec and Tsybakov(2015)]{bellec2015sharp}
Pierre~C Bellec and Alexandre~B Tsybakov.
\newblock Sharp oracle bounds for monotone and convex regression through
  aggregation.
\newblock \emph{Journal of Machine Learning Research}, 16\penalty0
  (56):\penalty0 1879--1892, 2015.

\bibitem[Chakraborty and Harbaugh(2007)]{chakraborty2007comparative}
Archishman Chakraborty and Rick Harbaugh.
\newblock Comparative cheap talk.
\newblock \emph{Journal of Economic Theory}, 132\penalty0 (1):\penalty0 70--94,
  2007.

\bibitem[Ivanov(2020)]{nips2020}
Sergei Ivanov.
\newblock {NeurIPS} 2020. comprehensive analysis of authors, organizations, and
  countries.
\newblock
  \url{https://medium.com/criteo-engineering/neurips-2020-comprehensive-analysis-of-authors-organizations-and-countries-a1b55a08132e},
  2020.
\newblock [Online].

\bibitem[Jecmen et~al.(2020)Jecmen, Zhang, Liu, Shah, Conitzer, and
  Fang]{jecmen2020mitigating}
Steven Jecmen, Hanrui Zhang, Ryan Liu, Nihar Shah, Vincent Conitzer, and Fei
  Fang.
\newblock Mitigating manipulation in peer review via randomized reviewer
  assignments.
\newblock \emph{Advances in Neural Information Processing Systems},
  33:\penalty0 12533--12545, 2020.

\bibitem[Kobren et~al.(2019)Kobren, Saha, and McCallum]{kobren2019paper}
Ari Kobren, Barna Saha, and Andrew McCallum.
\newblock Paper matching with local fairness constraints.
\newblock In \emph{Proceedings of the 25th ACM SIGKDD International Conference
  on Knowledge Discovery \& Data Mining}, pages 1247--1257, 2019.

\bibitem[Lawrence and Cortes(2014)]{nipsexp}
Neil Lawrence and Corinna Cortes.
\newblock The {NIPS} experiment, 2014.

\bibitem[Leyton-Brown et~al.(2024)Leyton-Brown, Nandwani, Zarkoob, Cameron,
  Newman, Raghu, et~al.]{leyton2024matching}
Kevin Leyton-Brown, Yatin Nandwani, Hedayat Zarkoob, Chris Cameron, Neil
  Newman, Dinesh Raghu, et~al.
\newblock Matching papers and reviewers at large conferences.
\newblock \emph{Artificial Intelligence}, 331:\penalty0 104119, 2024.

\bibitem[Li(2022)]{accept}
Xin Li.
\newblock Acceptance rates for the major top-tier ai-related conferences.
\newblock \url{https://github.com/lixin4ever/Conference-Acceptance-Rate}, 2022.
\newblock [Online].

\bibitem[Lipnowski and Ravid(2020)]{lipnowski2020cheap}
Elliot Lipnowski and Doron Ravid.
\newblock Cheap talk with transparent motives.
\newblock \emph{Econometrica}, 88\penalty0 (4):\penalty0 1631--1660, 2020.

\bibitem[MacKay et~al.(2017)MacKay, Kenna, Low, and
  Parker]{mackay2017calibration}
Robert~S MacKay, Ralph Kenna, Robert~J Low, and Sarah Parker.
\newblock Calibration with confidence: a principled method for panel
  assessment.
\newblock \emph{Royal Society open science}, 4\penalty0 (2):\penalty0 160760,
  2017.

\bibitem[Marshall et~al.(1979)Marshall, Olkin, and
  Arnold]{marshall1979inequalities}
Albert~W Marshall, Ingram Olkin, and Barry~C Arnold.
\newblock Inequalities: theory of majorization and its applications.
\newblock 1979.

\bibitem[Myerson(1989)]{myerson1989mechanism}
Roger~B Myerson.
\newblock \emph{Mechanism design}.
\newblock Springer, 1989.

\bibitem[Paul(1981)]{paul1981bayesian}
SR~Paul.
\newblock Bayesian methods for calibration of examiners.
\newblock \emph{British Journal of Mathematical and Statistical Psychology},
  34\penalty0 (2):\penalty0 213--223, 1981.

\bibitem[Shah(2022)]{shah2022challenges}
Nihar~B Shah.
\newblock Challenges, experiments, and computational solutions in peer review.
\newblock \emph{Communications of the ACM}, 65\penalty0 (6):\penalty0 76--87,
  2022.

\bibitem[Stelmakh et~al.(2021)Stelmakh, Shah, Singh, and
  Daum{\'e}~III]{stelmakh2021novice}
Ivan Stelmakh, Nihar~B Shah, Aarti Singh, and Hal Daum{\'e}~III.
\newblock A novice-reviewer experiment to address scarcity of qualified
  reviewers in large conferences.
\newblock In \emph{Proceedings of the AAAI Conference on Artificial
  Intelligence}, volume~35, pages 4785--4793, 2021.

\bibitem[Su(2021)]{su2021you}
Weijie~J Su.
\newblock You are the best reviewer of your own papers: {A}n owner-assisted
  scoring mechanism.
\newblock \emph{Advances in Neural Information Processing Systems},
  34:\penalty0 27929--27939, 2021.

\bibitem[Su(2022)]{su2022truthful}
Weijie~J Su.
\newblock A truthful owner-assisted scoring mechanism.
\newblock \emph{arXiv preprint arXiv:2206.08149}, 2022.

\bibitem[Sun(2020)]{}
Shao-Hua Sun.
\newblock {ICLR}2020-openreviewdata.
\newblock \url{https://github.com/shaohua0116/ICLR2020-OpenReviewData}, 2020.
\newblock [Online].

\bibitem[Tsybakov(2008)]{tsybakov2008introduction}
A.B. Tsybakov.
\newblock \emph{Introduction to Nonparametric Estimation}.
\newblock Springer Series in Statistics. Springer New York, 2008.
\newblock ISBN 9780387790527.
\newblock URL \url{https://books.google.com/books?id=mwB8rUBsbqoC}.

\bibitem[Wang and Shah(2018)]{wang2018your}
Jingyan Wang and Nihar~B Shah.
\newblock Your 2 is my 1, your 3 is my 9: Handling arbitrary miscalibrations in
  ratings.
\newblock \emph{arXiv preprint arXiv:1806.05085}, 2018.

\bibitem[Wu et~al.(2023)Wu, Xu, Guo, and Su]{wu2023isotonic}
Jibang Wu, Haifeng Xu, Yifan Guo, and Weijie Su.
\newblock A truth serum for eliciting self-evaluations in scientific reviews.
\newblock \emph{arXiv preprint arXiv:2306.11154}, 2023.

\bibitem[Zhang(2002)]{zhang2002risk}
Cun-Hui Zhang.
\newblock Risk bounds in isotonic regression.
\newblock \emph{The Annals of Statistics}, 30\penalty0 (2):\penalty0 528--555,
  2002.

\end{thebibliography}

\appendix

\section{Appendix}
\label{sec:appendix}

\subsection{Proof of Lemma \ref{lemma:upward-swap}\label{subsec:proof-lemma-upward-swap}}

We prove the lemma by induction. When $n=1$, the result is trivially
true. Suppose that the result holds for all $1\leq n\leq N-1$. Now
we show that the result also holds for $n=N$. Let $j=\pi^{-1}(N)$,
namely $j$ is the index such that $\pi(j)=N$. We define $\widetilde{\pi}$
by swapping the value of $\pi(j)$ and $\pi(N)$ in $\pi$:
\[
\widetilde{\pi}\left(i\right)=\begin{cases}
	N & \text{if }i=N,\\
	\pi\left(N\right) & \text{if }i=j,\\
	\pi\left(i\right) & \text{otherwise}.
\end{cases}
\]
It is straightforward to check that $\widetilde{\pi}$ is an upward
swap of $\pi$ when $j<N$, and $\widetilde{\pi}=\pi$ when $j=N$.
Since $\widetilde{\pi}(N)=N$ and $\pi^{\star}(N)=N$, by restricting
$\pi^{\star}$ and $\widetilde{\pi}$ to $[N-1]$, we obtain two permutations
$\overline{\pi}^{\star}$ and $\overline{\pi}$ over $[N-1]$, namely
\[
\overline{\pi}^{\star}\left(i\right)\coloneqq\pi^{\star}\left(i\right),\qquad\overline{\pi}\left(i\right)=\widetilde{\pi}\left(i\right),\qquad\forall\,i\in\left[N-1\right].
\]
Then we can apply the induction hypothesis to show that there exist
$m\geq1$ and a sequence of permutations over $[N-1]$: $\overline{\pi}_{1},\ldots,\overline{\pi}_{m}$
such that $\overline{\pi}_{1}=\overline{\pi}^{\star}$, $\overline{\pi}_{m}=\overline{\pi}$,
and for all $1\leq i\leq m-1$, $\overline{\pi}_{i}$ is an upward
swap of $\overline{\pi}_{i+1}$. Then we extend $\overline{\pi}_{1},\ldots,\overline{\pi}_{m}$
to be permutations $\widetilde{\pi}_{1}$, ..., $\widetilde{\pi}_{m}$
over $[N]$ by defining
\[
\widetilde{\pi}_{k}\left(i\right)=\begin{cases}
	\overline{\pi}_{k}\left(i\right) & \text{if }i\leq N-1,\\
	N & \text{if }i=N,
\end{cases}
\]
for all $k\in[m]$ and $i\in[N]$. When $j<N$, let 
\[
\pi_{1}=\widetilde{\pi}_{1},\quad\pi_{2}=\widetilde{\pi}_{2},\quad\ldots\quad\pi_{m}=\widetilde{\pi}_{m},\quad\pi_{m+1}=\pi.
\]
It is straightforward to check that $\pi_{1}=\pi^{\star}$, $\pi_{m}=\pi$,
and for each $1\leq i\leq m$, $\pi_{i}$ is an upward swap of $\pi_{i+1}$.
When $j=N$, let 
\[
\pi_{1}=\widetilde{\pi}_{1},\quad\pi_{2}=\widetilde{\pi}_{2},\quad\ldots\quad\pi_{m}=\widetilde{\pi}_{m}.
\]
We can also check that $\pi_{1}=\pi^{\star}$, $\pi_{m}=\pi$, and
for each $1\leq i\leq m-1$, $\pi_{i}$ is an upward swap of $\pi_{i+1}$.
This concludes the proof.



\end{document}